\newtheorem{Thm}{Theorem}[section]
\newtheorem{Lem}[Thm]{Lemma}
\newtheorem{Prop}[Thm]{Proposition}
\newtheorem{Cor}[Thm]{Corollary}
\newtheorem{Conj}[Thm]{Conjecture}
\theoremstyle{definition}
\newtheorem{Def}[Thm]{Definition}
\newtheorem{Rem}[Thm]{Remark}
\newcommand{\Sw}{\mathop{\mathrm{Sw}}\nolimits}
\newcommand{\Ind}{\mathop{\mathrm{Ind}}\nolimits}
\newcommand{\Trbr}{\mathrm{Tr}^{\mathrm{Br}}}
\newcommand{\rk}{\mathop{\mathrm{rk}}\nolimits}
\newcommand{\Spec}{\mathop{\mathrm{Spec}}\nolimits}
\newcommand{\Tr}{\mathop{\mathrm{Tr}}\nolimits}
\newcommand{\id}{\mathrm{id}}
\newcommand{\Gal}{\mathrm{Gal}}
\title{Wild Ramification and Restrictions to Curves}
\author{Hiroki Kato}
\begin{document}
\maketitle
\begin{abstract}
We prove that wild ramification of a constructible sheaf on a surface 
is determined by that of the restrictions to all curves. 
We deduce from this result 
that the Euler-Poincar\'e characteristic of a constructible sheaf 
on a variety of arbitrary dimension over an algebraically closed field 
is determined by wild ramification of the restrictions to all curves. 
We similarly deduce from it that 
so is the alternating sum of the Swan conductors of the cohomology groups, 
for a constructible sheaf on a variety over a local field. 
\end{abstract}
The Euler-Poincar\'e characteristic of a constructible \'etale sheaf is determined by wild ramification of the sheaf \cite{I} 
	and so is the alternating sum of the Swan conductors of the cohomology groups \cite{V}. 
Deligne-Illusie formulated the notion ``same wild ramification'' for constructible sheaves on a variety over a field 
	using the Brauer trace 
	and proved that constructible sheaves have the same Euler-Poincar\'e characteristics
	if they have the same wild ramification 
	\cite[Th\'eor\`eme 2.1]{I}. 
Vidal proved that 
	for constructible sheaves on a variety over a local field, 
	if they have the same wild ramification, 
	then the alternating sums of the Swan conductors of the cohomology groups 
		is the same \cite[Corollaire 3.4]{V}. 


For the notion ``same wild ramification'', 
	Saito-Yatagawa gave a formulation which is weaker than that of Deligne-Illusie 
using, instead of the Brauer trace, the dimensions of fixed parts \cite[Definition 5.1]{SY}. 
Having the same wild ramification in their sense 
	also implies having the same Euler-Poincar\'e characteristics \cite[Proposition 0.2]{SY} 
and they proved that 
constructible sheaves on a smooth variety have the same characteristic cycles 
if the sheaves have the same wild ramification \cite[Theorem 0.1]{SY}. 

Inspired by Beilinson's suggestion that the characteristic cycle of a constructible sheaf 
be determined by wild ramification of the restrictions to all curves, 
we consider whether wild ramification of a constructible sheaf is determined by that of the restrictions to all curves: 
\begin{Conj}
\label{conj}
Let $S$ be an excellent noetherian scheme whose closed points have perfect residue fields, 
$X$ an $S$-scheme separated of finite type, 
$\Lambda$ and $\Lambda'$ finite fields of characteristics invertible on $S$, 
and $\mathcal{F}$ and $\mathcal{F}'$ constructible complexes of 
$\Lambda$-modules and $\Lambda'$-modules respectively on $X$. 
Then the followings are equivalent. 
\begin{enumerate}
\renewcommand{\labelenumi}{{\rm(\roman{enumi})}}
\item $\mathcal{F}$ and $\mathcal{F}'$ have the same virtual wild ramification over $S$. 
\item $\mathcal{F}$ and $\mathcal{F}'$ have universally the same conductors over $S$. 
\end{enumerate}
\end{Conj}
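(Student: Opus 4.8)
The plan is to prove the two implications separately. After the standard limit and localization reductions one may assume that $S$ is the spectrum of a field or of a henselian discrete valuation ring with perfect residue field and that $X$ is integral. The implication (i) $\Rightarrow$ (ii) is the easy one: by construction, having the same virtual wild ramification over $S$ is stable under base change along any morphism $S'\to S$ and pullback to $S'$-schemes, so, once the definition of ``universally the same conductors over $S$'' is unwound, it suffices to deduce over each such base the equality of Euler-Poincar\'e characteristics when $S'$ is an algebraically closed field and the equality of the alternating sums of Swan conductors of the cohomology when $S'$ is a local field. These are precisely \cite[Th\'eor\`eme 2.1]{I} and \cite[Corollaire 3.4]{V}, so this direction is essentially bookkeeping.

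The implication (ii) $\Rightarrow$ (i) is the substantial one, and I would prove it by induction on $\dim X$. For $\dim X\le 1$ the two conditions are tautologically equivalent, since the conductors of a sheaf on a curve are its Swan conductors at the closed points of the normalization, which is exactly the datum recording wild ramification. For $\dim X=2$ one invokes the main theorem of the paper (the surface case): restricting (ii) to every morphism $C\to X$ from a curve shows that $\mathcal{F}|_C$ and $\mathcal{F}'|_C$ have universally the same conductors, hence the same wild ramification, for every such $C$, and the surface case upgrades this to the same wild ramification of $\mathcal{F}$ and $\mathcal{F}'$ on $X$. For $\dim X\ge 3$, I would show that the condition defining ``same wild ramification'' of $\mathcal{F}$ and $\mathcal{F}'$ at a geometric point $\bar x$ of $X$, relative to a choice of ramification datum of the relevant divisor through $x$, is detected on a suitable two-dimensional subscheme $Z\hookrightarrow X$ (or a quasi-finite $Z\to X$) through $x$; since (ii) restricts to $Z$, the surface case gives $\mathcal{F}|_Z$ and $\mathcal{F}'|_Z$ the same wild ramification, and letting $Z$ vary over enough such slices reconstructs wild ramification of $\mathcal{F}$ and $\mathcal{F}'$ on all of $X$, using that wild ramification is controlled in codimension $\le 2$, i.e.\ by the generic points of the wild ramification divisor and their specializations.

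The main obstacle is the surface case, and within it the passage from ``the Euler characteristic of every curve in $X$ is known'' to ``the local wild ramification along every curve in $X$ is known''. Concretely, I expect to blow up $X$ to put the wild ramification loci of $\mathcal{F}$ and $\mathcal{F}'$ in normal crossing position, and then --- using Deligne-Laumon semicontinuity of the Swan conductor together with Saito's index and Milnor formulas, or equivalently the Saito-Yatagawa comparison of characteristic cycles \cite[Theorem 0.1]{SY} --- to express the unknown local contributions along each component of the divisor in terms of the Euler characteristics of members of a well-chosen pencil of curves. The delicate point is that curves tangent to the divisor, or through its singular points, or meeting the non-isotriviality locus of the ramification contribute correction terms, so one must produce, by a Bertini-type genericity argument compatible with the Abbes-Saito ramification filtration, enough ``good'' curves to isolate every local invariant. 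A secondary obstacle, in the higher-dimensional step, is verifying that the Brauer-trace condition of Deligne-Illusie --- which a priori involves arbitrary elements of large wild inertia quotients --- is genuinely detected on two-dimensional slices.
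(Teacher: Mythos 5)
There are genuine gaps, the most basic being that the statement you are proving is stated in the paper as a \emph{conjecture}: the paper establishes (i)$\Rightarrow$(ii) in general (Proposition \ref{W to A}), but (ii)$\Rightarrow$(i) only for $S$-surfaces (Theorem \ref{A to W}) and, in higher dimension, only conditionally on the existence of regular compactifications of certain covers (Proposition \ref{p-power case}). Your higher-dimensional step is precisely the open part, and the sketch does not engage with the actual obstruction. ``Same virtual wild ramification'' is a condition at geometric points of a normal compactification $\overline{X}$, i.e.\ at infinity; the paper's method needs $\overline{X}$ (in fact compactifications of the quotients $W/H$ of a trivializing torsor) to be \emph{regular} so that Zariski--Nagata purity and the Kerz--Schmidt curve lemma (Lemma \ref{KS}) apply, and this is exactly where resolution of singularities enters. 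Slicing $X$ by two-dimensional subschemes does not control the boundary of a compactification of $X$, and ``wild ramification is controlled in codimension $\le 2$'' is an assertion, not an argument.

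Two further points would fail as written. First, the $\dim X\le 1$ case is not tautological: even for a curve one must pass from the Swan conductors of the restrictions $M|_H$ to all subgroups $H$ of the inertia to the dimensions $\Gamma^\sigma(M)$ of the fixed parts for every individual pro-$p$ element $\sigma$, and for two sheaves with coefficients in fields of different characteristics. This is the content of Lemmas \ref{sw} and \ref{if the same sw then the same fixed parts}, which work only for cyclic $p$-groups (whence the reductions in Lemmas \ref{red to the cyclic case} and \ref{cptf}), and it uses the integrality trick of Lemma \ref{brtr} to replace Brauer traces by fixed parts. Second, your proposed mechanism for the surface case (normal crossings, Deligne--Laumon semicontinuity, index formulas, pencils of curves) only produces numerical invariants of $\mathcal{F}$ itself --- total ranks and Swan conductors --- and cannot isolate the element-wise data $\Gamma^\sigma([\mathcal{F}_y])$ demanded by Definition \ref{svwr}; the paper instead restricts to the quotients $W/H$ of a trivializing $G$-torsor and uses curves mapping to these covers, which is why Definition \ref{sc} quantifies over arbitrary $S$-curves $C\to X$ and not just embedded ones. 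Relatedly, in (i)$\Rightarrow$(ii) you conflate ``universally the same conductors'' (local Artin conductors on compactified curves, Definition \ref{sc}) with the global conditions of Definition \ref{def of X}; their equivalence is itself a theorem requiring the main result, whereas the intended argument is the purely local Lemma \ref{sw} after reducing to a strictly local trait.
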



The terminology {\it same virtual wild ramification} is defined in Definition \ref{svwr} in the text, 
	which is a slight modification of \cite[Definition 5.1]{SY}. 
The terminology {\it universally the same conductors}, 
which means wild ramification of the restrictions to all curves are the same, 
is defined in Definition \ref{sc} in the text. 
The implication (i)$\Rightarrow$(ii) is straightforward (Proposition \ref{W to A}). 

We prove that Conjecture \ref{conj} holds if $X$ is an open subscheme of a proper $S$-scheme of dimension $\leq2$, 
that is, the wild ramification of a constructible sheaf on a surface is determined by that of the restrictions to all curves (Theorem \ref{A to W}). 
The hypothesis that dimension $\leq2$ is used to take a regular compactification, 
and if we assume resolution of singularities is always possible, 
Conjecture \ref{conj} holds in general (Proposition \ref{p-power case}). 

From the above result we deduce the followings: 
\begin{Thm}
\label{A to X}
Let 
$X$ be a separated scheme of finite type over an algebraically closed field 
of characteristic $p>0$. 
Let $\Lambda$ and $\Lambda'$ be finite fields of characteristics different from $p$ 
and $\mathcal{F}$ and $\mathcal{F}'$ constructible complexes 
of $\Lambda$-modules and $\Lambda '$-modules respectively on $X$. 
Then, if $\mathcal{F}$ and $\mathcal{F}'$ 
have universally the same conductors, 
then we have 
$\chi_c(X,\mathcal{F})=\chi_c(X,\mathcal{F}')$.
\end{Thm}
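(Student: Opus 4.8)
The plan is to argue by induction on $d=\dim X$, reducing the general case to the case $d\le 2$ supplied by Theorem~\ref{A to W}. First, since every separated $k$-scheme of finite type is a finite union of affine open subschemes, and since $\chi_c$ is additive along locally closed decompositions while having universally the same conductors is inherited by locally closed subschemes, I would reduce to $X$ quasi-projective and fix a locally closed immersion $X\hookrightarrow\mathbb{P}^N_k$ with projective closure $\bar X$. For $d\le 2$ the scheme $X$ is an open subscheme of a proper $k$-scheme of dimension $\le 2$, so Theorem~\ref{A to W} gives that $\mathcal F$ and $\mathcal F'$ have the same virtual wild ramification over $k$, whence $\chi_c(X,\mathcal F)=\chi_c(X,\mathcal F')$ by \cite[Th\'eor\`eme~2.1]{I} (using the comparison between Definition~\ref{svwr} and the notion of \cite{I}, cf.\ \cite[Proposition~0.2]{SY}).

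For $d\ge 3$ the plan is to fibre $X$ by curves over a variety of dimension $d-1$. Projecting $\bar X$ from a general linear subspace of dimension $N-d-1$, which is disjoint from $\bar X$, produces a finite morphism $\pi\colon\bar X\to\mathbb{P}^d_k$; composing with the linear projection from a point $q\in\mathbb{P}^d_k$ onto $\mathbb{P}^{d-1}_k$ yields a morphism $g\colon V\to\mathbb{P}^{d-1}_k$ on the open subscheme $V:=X\setminus\pi^{-1}(q)$, whose complement in $X$ is finite and whose fibres, being contained in $\pi^{-1}$ of lines through $q$, all have dimension $\le 1$. Since $\chi_c(X,\mathcal F)=\chi_c(V,\mathcal F)+\chi_c(X\setminus V,\mathcal F)$ and the last term is determined by the (matching) stalk Euler characteristics on the finite set $X\setminus V$, it suffices to show $\chi_c(V,\mathcal F)=\chi_c(V,\mathcal F')$; and $\chi_c(V,\mathcal F)=\chi_c(\mathbb{P}^{d-1}_k,Rg_!\mathcal F)$ with $\dim\mathbb{P}^{d-1}_k=d-1$. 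So by the inductive hypothesis it remains to check that $Rg_!\mathcal F$ and $Rg_!\mathcal F'$ have universally the same conductors on $\mathbb{P}^{d-1}_k$, i.e.\ that for every curve $C$ over $k$ mapping to $\mathbb{P}^{d-1}_k$ their restrictions to $C$ carry the same conductor data (generic rank, stalk dimensions, and Swan conductors at the points of a smooth compactification of $C$).

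Fix such a $C$. By base change for $R(-)_!$ one has $(Rg_!\mathcal F)|_C\cong R(g_C)_!(\mathcal F|_{V_C})$, where $V_C:=V\times_{\mathbb{P}^{d-1}_k}C$ has dimension $\le 2$, the morphism $g_C\colon V_C\to C$ has fibres of dimension $\le 1$, and $\mathcal F|_{V_C}$, $\mathcal F'|_{V_C}$ inherit having universally the same conductors. The stalk dimension of $R(g_C)_!(\mathcal F|_{V_C})$ at a closed point $s$ is $\chi_c$ of the geometric fibre $(V_C)_{\bar s}$ — a curve over $k$ — with coefficients in $\mathcal F$; and, by constructibility of $R(g_C)_!$ together with generic constancy of fibre Euler characteristics, the generic rank equals $\chi_c$ of a general closed geometric fibre of $g_C$, again a curve over $k$. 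Hence the Grothendieck--Ogg--Shafarevich formula, applied on these curves where $\mathcal F$ and $\mathcal F'$ carry the same conductor data, shows the generic rank and all stalk dimensions of $R(g_C)_!(\mathcal F|_{V_C})$ and of $R(g_C)_!(\mathcal F'|_{V_C})$ coincide. For a closed point $s$ of $C$, let $S_s$ be the henselization of $C$ at $s$, a trait whose closed point has residue field $k$, hence perfect; the Swan conductor $\Sw_s\bigl(R(g_C)_!(\mathcal F|_{V_C})\bigr)$ is the Swan conductor of $R\Gamma_c$ of the geometric generic fibre of $V_C\times_C S_s$, which is an open subscheme of a proper $S_s$-scheme of dimension $\le 2$. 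So Theorem~\ref{A to W} gives that $\mathcal F$ and $\mathcal F'$ restrict to sheaves with the same virtual wild ramification over $S_s$, and then Vidal's theorem \cite[Corollaire~3.4]{V} (with the comparison of wild-ramification notions) yields $\Sw_s(R(g_C)_!(\mathcal F|_{V_C}))=\Sw_s(R(g_C)_!(\mathcal F'|_{V_C}))$. Thus all conductor data on $C$ agree; as $C$ was arbitrary, the induction closes.

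The main obstacle, and the reason for fibring by curves rather than otherwise, is the control of the Swan-conductor terms. Fibring $X$ by curves makes these Swan conductors the Swan conductors of the compactly supported cohomology of schemes of dimension $\le 2$ over henselian traits, which is precisely the range covered by Theorem~\ref{A to W} and hence by Vidal's theorem; fibring $X$ instead over a curve by a morphism with higher-dimensional fibres would produce, through the Grothendieck--Ogg--Shafarevich formula on the base, Swan conductors of $X$ itself over local fields, for which the surface case gives no information and the induction fails to close. The remaining points — the construction of the curve-fibration $g$, the fact that ``universally the same conductors'' is stable under the base changes $V_C\to X$ and $V_C\times_C S_s\to V_C$, the reduction of the conductor data on $C$ to Euler characteristics and Swan conductors on curves over $k$, and the compatibility of the wild-ramification notion of Theorem~\ref{A to W} with the hypotheses of \cite{I} and \cite{V} — are routine given the results recalled in the introduction.
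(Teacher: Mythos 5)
Your strategy diverges from the paper's at the key reduction step, and the divergence is where the gap lies. The paper does not fibre $X$ by curves over $\mathbb{P}^{d-1}_k$; it fibres $X$ over a \emph{surface} $Y$ (an \'etale map to $\mathbb{A}^d$ followed by a projection to $\mathbb{A}^2$, after shrinking $X$). The point of that choice is that the conductor data of $Rf_!\mathcal{F}$ on the surface $Y$ is recovered, via Lemma \ref{red to surface case} (essentially \cite[Lemma 3.3]{SY}), purely from the Euler characteristics $\chi_c(X\times_YC,-)$ for curves $C\to Y$ over $k$ --- quantities to which the induction hypothesis applies since $\dim X\times_YC=d-1$. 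No henselian trait and no Swan conductor of cohomology over a local field ever enters; one then concludes by the surface case of Theorem \ref{A to W} applied to $Y$ and Proposition \ref{W to X}. Your third option (curve fibres over a $(d-1)$-dimensional base) forces you to compare $\Sw_s(R(g_C)_!\mathcal{F})$ for $s$ a closed point of a curve $C$, i.e.\ the alternating sum of Swan conductors of the cohomology of a curve over the local field $\mathrm{Frac}(\mathcal{O}_{C,s}^h)$ --- which is essentially the content of Theorem \ref{sc to sgc}, a result whose proof in the paper itself relies on Theorem \ref{A to X}.

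The concrete gap is the sentence ``Theorem \ref{A to W} gives that $\mathcal{F}$ and $\mathcal{F}'$ restrict to sheaves with the same virtual wild ramification over $S_s$.'' To invoke Theorem \ref{A to W} with base $S_s$ you must first know that the pullbacks to $V_C\times_CS_s$ have \emph{universally the same conductors over $S_s$} in the sense of Definition \ref{sc}, and that condition quantifies over all regular $S_s$-curves, in particular over traits finite over the henselization $S_s$. These are not of finite type over $k$, so the hypothesis ``universally the same conductors over $k$'' does not apply to them formally; you need a descent/approximation argument (descend such a trait and its map to an \'etale neighbourhood of $s$ in $C$, producing an honest $k$-curve with matching Artin conductors). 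You file this under ``stability under the base change $V_C\times_CS_s\to V_C$'' as routine, but it is a change of the base scheme $S$, not a pullback in the sense of Lemma \ref{pullback}, and it is exactly the kind of statement the paper proves separately when it needs it (compare Lemma \ref{x over a local field}). The step is fillable, but it is the crux of your argument and cannot be omitted. A secondary point: \cite[Corollaire 3.4]{V} assumes Vidal's Brauer-trace notion of same wild ramification, which is strictly stronger than the paper's ``same virtual wild ramification''; the output of Theorem \ref{A to W} only gives the weaker notion, so you must route through Proposition \ref{svwr to sgc} (which does not depend on Theorem \ref{A to X}, so no circularity arises) rather than citing Vidal's corollary directly. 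The same remark applies to your citation of \cite[Th\'eor\`eme 2.1]{I} in the base case, where Proposition \ref{W to X} is the statement adapted to the weaker notion.
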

\begin{Thm}
\label{sc to sgc}
Let $S$ be an excellent trait with perfect residue field of characteristic $p>0$ 
and with generic point $\eta=\Spec K$ 
and $X$ a separated scheme of finite type over $K$. 
Let $\Lambda$ and $\Lambda'$ be finite fields of characteristics different from $p$ 
and $\mathcal{F}$ and $\mathcal{F}'$ constructible complexes 
of $\Lambda$-modules and $\Lambda'$-modules respectively on $X$. 
If $\mathcal{F}$ and $\mathcal{F}'$ have universally the same conductors over $S$, 
then we have $\Sw(X,\mathcal{F})=\Sw(X,\mathcal{F}')$, 
where $\Sw(X,-)$ denotes the alternating sum 
$\sum_{i}(-1)^i\Sw H_c^i(X_{\overline{K}},-)$ of the Swan conductors. 
\end{Thm}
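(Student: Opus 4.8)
The plan is to deduce Theorem~\ref{sc to sgc} from Theorems~\ref{A to W} and~\ref{A to X} by expressing the alternating sum $\Sw(X,\mathcal F)$ of Swan conductors over $K$ as an alternating sum of Euler-Poincar\'e characteristics, over the residue field of $S$, of sheaves obtained from $\mathcal F$ by nearby cycles, and then by showing that the property of having universally the same conductors propagates to those sheaves.

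First I would make the standard reductions. Since $\Sw(X,-)$ and the hypothesis depend only on the inertia action and are unchanged when $S$ is replaced by the completion of the maximal unramified extension of $S$, I may assume the residue field $k$ of $S$ algebraically closed. Choosing a Nagata compactification $j\colon X\hookrightarrow\overline X$ over $K$ and replacing $\mathcal F,\mathcal F'$ by $j_!\mathcal F,j_!\mathcal F'$, which still have universally the same conductors over $S$, I reduce to $X$ proper over $K$, so that $\Sw(X,\mathcal F)$ is the alternating sum of the Swan conductors of the inertia modules $H^*(\overline X_{\overline K},j_!\mathcal F)=H^*(\mathcal X_{\overline s},R\Psi j_!\mathcal F)$ for a proper model $\mathcal X$ of $\overline X$ over $S$. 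Using de Jong's alterations and semistable reduction --- at the cost of a finite extension of $S$ and an alteration of $\overline X$, whose effect on both sides is controlled by lower-dimensional contributions treated inductively --- I may take $\mathcal X$ with semistable special fibre, so that Vidal's formalism of log nearby cycles applies.

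A conductor formula of Vidal type then expresses $\Sw(X,\mathcal F)$ as an alternating sum, over the strata $Z$ of $\mathcal X_{\overline s}$ and the slopes $r$, of Euler-Poincar\'e characteristics $\chi_c\bigl(Z,\mathrm{gr}^r(R\Psi j_!\mathcal F)|_Z\bigr)$ of the graded pieces $\mathrm{gr}^r$ of the Swan-slope filtration of the nearby cycles, weighted by $r$. The crucial observation is that the restriction of $R\Psi j_!\mathcal F$ to any curve $C\subset\mathcal X_{\overline s}$ is determined by the restriction of $\mathcal F$ to a surface $\Sigma\subset\mathcal X$ that contains $C$ in its special fibre and is transverse to $\mathcal X_{\overline s}$; such a $\Sigma$ is a relative curve over $S$, hence an $S$-scheme of dimension $\le2$, and Theorem~\ref{A to W} applies to it. From $\mathcal F|_\Sigma$ and $\mathcal F'|_\Sigma$ having universally the same conductors it follows that they have the same virtual wild ramification over $S$, and hence --- by the compatibility of having the same virtual wild ramification with nearby cycles underlying \cite{SY} --- that $R\Psi j_!\mathcal F$ and $R\Psi j_!\mathcal F'$ have the same conductors along $C$ and the same Swan-slope filtrations there. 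As this holds for every $C$, the sheaves $\mathrm{gr}^r(R\Psi j_!\mathcal F)$ and $\mathrm{gr}^r(R\Psi j_!\mathcal F')$ have universally the same conductors on $\mathcal X_{\overline s}$ over $\Spec k$, so Theorem~\ref{A to X} yields the equality of the corresponding Euler-Poincar\'e characteristics; summing gives $\Sw(X,\mathcal F)=\Sw(X,\mathcal F')$.

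I expect the main obstacle to be the construction and use of the model: producing $\mathcal X$ so that Vidal's formalism applies, controlling the descent along the finite extension of $S$ and the alteration of $\overline X$ (which is where the induction --- on $\dim_K X$, with base case $\dim_K X\le1$ settled directly from Theorem~\ref{A to W} and \cite[Corollaire 3.4]{V} --- enters), and making precise the decomposition of $\Sw(X,\mathcal F)$ into Euler-Poincar\'e characteristics of the graded pieces of the Swan-slope filtration of $R\Psi j_!\mathcal F$ in a form to which Theorem~\ref{A to X} applies. A more economical variant avoids the model altogether: fibre $X$ over $\mathbb P^1_K$ and use the Grothendieck-Ogg-Shafarevich formula over $K$ for the curve $\mathbb P^1_K$ with coefficients $Rf_!\mathcal F$ to reduce $\Sw(X,\mathcal F)$ to lower-dimensional Swan conductors over the residue fields of the closed points of $\mathbb P^1_K$, an induction on $\dim_K X$, the propagation of the hypothesis to the vanishing-cycle sheaves of $f$ being handled exactly as above by Theorem~\ref{A to W} applied to the relative curves over $S$ occurring in a model of the fibration.
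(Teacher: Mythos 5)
Your primary route has two gaps that I do not think can be repaired as stated. First, the ``conductor formula of Vidal type'' you invoke --- writing $\Sw(X,\mathcal F)$ as a weighted alternating sum of $\chi_c$ of graded pieces of a Swan-slope filtration of $R\Psi j_!\mathcal F$ over the strata of a semistable model --- is not available: Vidal's results give $\ell$-independence of traces on nearby cycles and the implication from same wild ramification to equal Swan conductors, not a stratified local conductor formula; such a formula is essentially the conductor-formula problem itself and is not known in this generality. Second, the claim that $R\Psi j_!\mathcal F|_C$ is determined by $\mathcal F|_\Sigma$ for a surface $\Sigma$ transverse to the special fibre through $C$ amounts to a commutation of nearby cycles with restriction to a subscheme; this fails in general for wildly ramified coefficients (it is a nontrivial local acyclicity statement), and the existence of such a $\Sigma$ through an arbitrary curve in the special fibre is itself unclear. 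The reductions via de Jong alterations and finite extensions of $S$ are also only waved at: the error terms live on the non-Galois locus and the exceptional divisors and are not obviously covered by the induction on $\dim_K X$.

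Your ``more economical variant'' is essentially the paper's actual proof: one shrinks $X$, takes a flat $f:X\to Y$ to a curve over $K$, inducts on $\dim X$, and applies the base case (which you identify correctly: Theorem \ref{A to W} makes a $K$-curve an $S$-surface, then Vidal's result via Proposition \ref{svwr to sgc}) to $Y$ with coefficients $Rf_!\mathcal F$. But the step you leave implicit --- that $Rf_!\mathcal F$ and $Rf_!\mathcal F'$ again have universally the same conductors over $S$ --- is the real content, and it is not handled by ``Theorem \ref{A to W} applied to relative curves in a model''; no model or vanishing cycles enter. The paper proves it (Lemma \ref{red2}) in two steps: equality of ranks of $Rf_!\mathcal F$ at geometric points of $Y$ follows from Theorem \ref{A to X} after base change to $\bar\eta$ (for which one needs Lemma \ref{x over a local field}, again via Theorem \ref{A to W} applied to curves over finite extensions of $K$ viewed as $S$-surfaces); and equality of $\Sw_\xi(Rf_!\mathcal F)$ at a closed point $\xi$ of $Y$ is extracted from the inductively known equality $\Sw(X\times_Y\xi,\mathcal F)=\Sw(X\times_Y\xi,\mathcal F')$ by the induction (conductor-discriminant) formula $\Sw(\Ind M')=\Sw M'+\dim M'\cdot(d+[K'_s:K]-1)$ of Lemma \ref{conductor discriminant}. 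If you supply these two ingredients, your variant becomes the paper's argument.
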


The proof of Theorem \ref{A to X} is reduced to the case where $X$ is a surface.
Similarly that of Theorem \ref{sc to sgc} is reduced to the case where $X$ is a curve over $K$. 
Then, we can apply Theorem \ref{A to W}. 
Therefore, Theorem \ref{A to X} follows from the fact that 
having the same virtual wild ramification implies having the same Euler-Poincar\'e characteristics (Proposition \ref{W to X})
and Theorem \ref{sc to sgc} follows from the fact that 
having the same virtual wild ramification implies having ``the same Swan conductors" (Proposition \ref{svwr to sgc}). 


\paragraph{Acknowledgment}
The author thanks Professor Takeshi Saito 
for leading him to the study of arithmetic geometry, 
for a lot of discussion and a lot of advice, 
and for sharing the suggestion of Beilinson. 
He also thanks Yuri Yatagawa for answering many questions on ramification theory. 

\paragraph{Notations and terminologies}
We fix some notations and terminologies; 
Throughout this paper, we fix finite fields $\Lambda$ and $\Lambda'$ 
and assume that the characteristics of $\Lambda$ and $\Lambda'$ are invertible on all schemes considered in this paper.

For an excellent noetherian scheme $S$, an $S$-{\it curve} (resp. an $S$-{\it surface}) means 
an open subscheme of a proper $S$-scheme of dimension 1 (resp. 2).  

A {\it smooth} sheaf means a locally constant constructible (\'etale) sheaf.

A {\it constructible} complex (resp. {\it smooth} complex) means 
a complex $\mathcal{F}$ of sheaves whose cohomology sheaves $\mathcal{H}^q(\mathcal{F})$ are zero except for finitely many $q$ and constructible (resp. smooth) for all $q$. 


For a scheme $X$ and a geometric point $x\to X$ we denote the strict localization at $x$ by $X_{(x)}$. 

For a pro-finite group $G$, 
we denote by $K(\Lambda[G])$ the Grothendieck group of the category 
of finite $\Lambda$-vector spaces on which $G$ acts continuously 
with respect to the discrete topology of $\Lambda$. 

For a smooth complex $\mathcal{F}$ on a connected noetherian scheme $X$ 
and for a $G$-torsor $W\to X$, 
we say $\mathcal{F}$ is {\it trivialized} by $W\to X$ 
if every cohomology sheaf $\mathcal{H}^q(\mathcal{F}|_W)$ is constant. 
If $\mathcal{F}$ is {\it trivialized} by a $G$-torsor $W\to X$, 
$\mathcal{F}$ defines a virtual representation $M$ of $G$. 
We call this $M$ the virtual representation of $G$ {\it corresponding} to $\mathcal{F}$. 
If we choose a geometric point $y$ of $X$, 
the stalk $\mathcal{F}_y$ defines an element of $K(\Lambda[\pi_1(X,y)])$, 
which we denote by $[\mathcal{F}_y]$.


\section{Preliminaries on the Swan conductor}
\label{swbr}
In this section, we see some elementary properties of the Swan conductor. 

We need the following lemma on the Brauer trace. 
For the definition of the Brauer trace we refer to \cite[18.1]{Se} 
and denote it by $\Trbr$. 
Let $p$ be a prime number 
and $\Lambda$ a finite field of characteristic different from $p$. 
\begin{Lem}[Lemma 4.1 in \cite{SY}]
\label{brtr}
Let 
$M$ be a $\Lambda$-vector space of finite dimension 
and $\sigma$ an endomorphism of $M$ of order a power of $p$. 
Then for a subfield $K$ of the fraction field of the Witt ring $W(\Lambda)$ 
which is finite degree over $\mathbb{Q}$ 
and contains $\Trbr(\sigma,M)$, 
we have 
$$
\frac{1}{[K:\mathbb{Q}]}\Tr_{K/\mathbb{Q}}\Trbr(\sigma,M)=\frac{1}{p-1}(p\cdot\dim M^{\sigma}-\dim M^{\sigma^p}).
$$
\end{Lem}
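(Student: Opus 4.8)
The plan is to reduce the identity to an explicit trace computation with $p$-power roots of unity. Write $p^n$ for the order of $\sigma$; if $n=0$, i.e.\ $\sigma=\id$, then both sides equal $\dim M$ and there is nothing to prove, so assume $n\ge1$. Since the characteristic of $\Lambda$ is prime to $p$, the minimal polynomial of $\sigma$ divides the separable polynomial $X^{p^n}-1$, so $\sigma$ is semisimple and, over $\overline{\Lambda}$, its eigenvalues are the powers of a fixed primitive $p^n$-th root of unity $\zeta\in\overline{\Lambda}$; write $a_j$ for $j\in\mathbb{Z}/p^n$ for the multiplicity of $\zeta^j$. Let $\xi$ be the $p^n$-th root of unity lifting $\zeta$ (in $\mathbb{C}$, or in $\mathrm{Frac}(W(\overline{\Lambda}))$; the lift exists and has exact order $p^n$ because $p$ is invertible in $\Lambda$). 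Then the definition of the Brauer trace gives $\Trbr(\sigma,M)=\sum_{j\in\mathbb{Z}/p^n}a_j\xi^j\in\mathbb{Z}[\xi]$, while $\dim M^{\sigma}=a_0$ and, as $\sigma^p$ acts by $\zeta^{jp}$ on the $\zeta^j$-eigenspace and $\zeta^{jp}=1$ exactly when $p^{n-1}\mid j$, we get $\dim M^{\sigma^p}=\sum_{k=0}^{p-1}a_{kp^{n-1}}$. Hence the right-hand side of the asserted formula equals $a_0-\frac{1}{p-1}\sum_{k=1}^{p-1}a_{kp^{n-1}}$.

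Next I would show the left-hand side has the same value. By transitivity of the trace, for number fields $\mathbb{Q}(x)\subseteq K'\subseteq K$ one has $\Tr_{K/\mathbb{Q}}(x)=[K:K']\,\Tr_{K'/\mathbb{Q}}(x)$, so $\frac{1}{[K:\mathbb{Q}]}\Tr_{K/\mathbb{Q}}(x)$ depends only on the subfield generated by $x$. Applying this first inside $\mathrm{Frac}(W(\Lambda))$ and then inside $\mathbb{Q}(\xi)$ — legitimate because $\mathbb{Q}(\Trbr(\sigma,M))\subseteq\mathbb{Q}(\xi)$, as $\Trbr(\sigma,M)\in\mathbb{Z}[\xi]$ — the left-hand side equals $\frac{1}{\varphi(p^n)}\Tr_{\mathbb{Q}(\xi)/\mathbb{Q}}\Trbr(\sigma,M)=\frac{1}{\varphi(p^n)}\sum_{j}a_j\,\Tr_{\mathbb{Q}(\xi)/\mathbb{Q}}(\xi^j)$. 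It then remains to evaluate $\Tr_{\mathbb{Q}(\xi)/\mathbb{Q}}(\xi^j)$. If $\xi^j$ has exact order $p^m$, transitivity of the trace gives $\Tr_{\mathbb{Q}(\xi)/\mathbb{Q}}(\xi^j)=\frac{\varphi(p^n)}{\varphi(p^m)}\Tr_{\mathbb{Q}(\xi^j)/\mathbb{Q}}(\xi^j)$, and $\Tr_{\mathbb{Q}(\xi^j)/\mathbb{Q}}(\xi^j)$, the sum of the primitive $p^m$-th roots of unity, is read off the cyclotomic polynomial $\Phi_{p^m}(X)=1+X^{p^{m-1}}+\cdots+X^{(p-1)p^{m-1}}$ to be $1$ for $m=0$, $-1$ for $m=1$, and $0$ for $m\ge2$ (its subleading coefficient vanishing once $p^{m-1}>1$). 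Thus $\frac{1}{\varphi(p^n)}\Tr_{\mathbb{Q}(\xi)/\mathbb{Q}}(\xi^j)$ equals $1$ if $j\equiv0$, equals $-\frac{1}{p-1}$ if $j\in\{p^{n-1},2p^{n-1},\dots,(p-1)p^{n-1}\}$ (the $j$ with $\xi^j$ of order exactly $p$), and equals $0$ otherwise; so the left-hand side equals $a_0-\frac{1}{p-1}\sum_{k=1}^{p-1}a_{kp^{n-1}}$, matching the right-hand side computed above.

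There is no real obstacle in this argument: once the structure is set up it is bookkeeping with roots of unity. The points deserving care are the independence of $K$ — which is exactly what lets one replace the a priori possibly small field $\mathbb{Q}(\Trbr(\sigma,M))$ by the full cyclotomic field $\mathbb{Q}(\xi)$, where the trace computation is transparent — the vanishing $\Tr_{\mathbb{Q}(\xi^j)/\mathbb{Q}}(\xi^j)=0$ when the order of $\xi^j$ is divisible by $p^2$ (equivalently, the Ramanujan-sum identity $\sum_{u\in(\mathbb{Z}/p^n)^\times}\xi^{uj}=0$ in that range), and the explicit verification of the degenerate cases $n=0$ and $n=1$, where the general formula above either does not literally apply or should be checked by hand.
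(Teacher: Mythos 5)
Your argument is correct. Note that the paper itself gives no proof of this lemma --- it is quoted with attribution to [SY, Lemma 4.1] --- so there is nothing in the text to compare against; your computation (diagonalize $\sigma$ over $\overline{\Lambda}$ using that $X^{p^n}-1$ is separable, write $\Trbr(\sigma,M)=\sum_j a_j\xi^j$ in terms of the Teichm\"uller lift $\xi$, use the observation that $\frac{1}{[K:\mathbb{Q}]}\Tr_{K/\mathbb{Q}}$ is independent of the field $K$ containing the element to pass to $\mathbb{Q}(\xi)$, and evaluate $\Tr_{\mathbb{Q}(\xi)/\mathbb{Q}}(\xi^j)$ via $\mu(p^m)$) is the standard proof, and your bookkeeping, including the degenerate cases $n=0$ and $n=1$ and the identification of $\dim M^{\sigma}$ and $\dim M^{\sigma^p}$ with the appropriate eigenvalue multiplicities, checks out.
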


Let $G$ be a pro-finite group. 
For a pro-$p$ subgroup $H$ of $G$, 
we define the homomorphism $\Gamma^H:K(\Lambda[G])\to\mathbb{Z}$ 
by assigning the class of each finitely generated $\Lambda[G]$-module $M$ to $\dim M^H$, 
where $M^H$ denotes the fixed part. 

Let $K$ be a henselian discrete valuation field with algebraically closed residue field of characteristic $p$ 
and $L$ be a finite Galois extension of $K$ with Galois group $G$. 
The Swan character $sw_G:G\to\mathbb{Z}$ is defined in \cite[19.1]{Se}. 
Let $M$ be an element of $K(\Lambda[G])$. 
The Swan conductor $\Sw(M)$ of $M$ is defined in \cite[19.3]{Se}. 

\begin{Lem}
\label{sw}
We have 
$$
\Sw(M)=\frac{1}{|G|}\sum_{\sigma\in P}sw_G(\sigma)\cdot\frac{p\cdot\Gamma^\sigma(M)-\Gamma^{\sigma^p}(M)}{p-1},
$$
where $P$ is the $p$-Sylow subgroup of $G$. 
\end{Lem}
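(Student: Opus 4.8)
The plan is to rewrite both sides as $\tfrac1{|G|}$ times a sum over the wild inertia subgroup $P$ and to match them term‑by‑term via Lemma~\ref{brtr}, using a Galois‑averaging (trace) argument to absorb the $\Tr_{K/\mathbb Q}$ that Lemma~\ref{brtr} introduces. First I would recall, from Serre's treatment of the Swan conductor \cite[\S19]{Se}, that $\Sw(M)=\langle sw_G,M\rangle$, that the class function $sw_G$ is supported on $P$ (one has $sw_G=a_G-\mathrm{reg}_G+1_G$, so $sw_G(\sigma)=1-i_G(\sigma)$ for $\sigma\neq1$, which vanishes exactly on $G_0\setminus G_1=G\setminus P$ since the residue field is algebraically closed), and that $sw_G(\sigma)$ depends only on the cyclic subgroup $\langle\sigma\rangle$, so that $sw_G(\sigma)=sw_G(\sigma^{a})$ whenever $\gcd(a,\mathrm{ord}(\sigma))=1$. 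Since every $\sigma\in P$ has order prime to $\ell=\mathrm{char}\,\Lambda$, $\Trbr(\sigma,M)$ is defined, and these facts give
\[
\Sw(M)=\frac1{|G|}\sum_{\sigma\in P}sw_G(\sigma)\,\Trbr(\sigma,M),
\]
where any replacement $\sigma\leftrightarrow\sigma^{-1}$ coming from one's normalisation of $\langle sw_G,-\rangle$ is harmless by the conjugation‑invariance of $sw_G$. It therefore suffices to identify the right‑hand side of Lemma~\ref{sw} with this expression.

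Next I would invoke Lemma~\ref{brtr}. Let $p^N$ be the exponent of $P$. Each $\Trbr(\sigma,M)$ with $\sigma\in P$ is a $\mathbb Z$‑linear combination of $p^N$‑th roots of unity, and it lies in $\mathrm{Frac}(W(\Lambda))$ because $M$ is defined over $\Lambda$ (eigenvalues of $\sigma$ occur in $q$‑Frobenius orbits, whose lift‑sums are $\mathrm{Frac}(W(\Lambda))$‑rational); hence, after a fixed embedding, all of them lie in a single finite abelian extension $K/\mathbb Q$ inside $\mathrm{Frac}(W(\Lambda))$, e.g.\ $K=\mathrm{Frac}(W(\Lambda))\cap\mathbb Q(\zeta_{p^N})$. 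For this $K$, Lemma~\ref{brtr} (applied to each actual summand of a presentation $M=M_1-M_2$ and extended by additivity) gives $\tfrac1{[K:\mathbb Q]}\Tr_{K/\mathbb Q}\Trbr(\sigma,M)=\tfrac{p\cdot\Gamma^\sigma(M)-\Gamma^{\sigma^p}(M)}{p-1}$ for every $\sigma\in P$, so the right‑hand side of Lemma~\ref{sw} equals
\[
\frac1{|G|}\sum_{\sigma\in P}sw_G(\sigma)\,\frac1{[K:\mathbb Q]}\Tr_{K/\mathbb Q}\Trbr(\sigma,M)
=\frac1{|G|\,[K:\mathbb Q]}\sum_{\tau\in\Gal(K/\mathbb Q)}\ \sum_{\sigma\in P}sw_G(\sigma)\,\tau\bigl(\Trbr(\sigma,M)\bigr).
\]
Extending $\tau$ to $\Gal(\mathbb Q(\zeta_{p^N})/\mathbb Q)$ and letting $a(\tau)\in(\mathbb Z/p^N\mathbb Z)^\times$ be its image under the cyclotomic character, one has $\tau(\Trbr(\sigma,M))=\Trbr(\sigma^{a(\tau)},M)$, since the Brauer lift is multiplicative and the eigenvalues of $\sigma^{a(\tau)}$ are the $a(\tau)$‑th powers of those of $\sigma$. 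For fixed $\tau$, substituting $\sigma\mapsto\sigma^{a(\tau)^{-1}}$ — a bijection of $P$, because raising to a power prime to $p$ is bijective on a $p$‑group — and using $sw_G(\sigma^{a})=sw_G(\sigma)$, one finds $\sum_{\sigma\in P}sw_G(\sigma)\,\tau(\Trbr(\sigma,M))=\sum_{\sigma\in P}sw_G(\sigma)\,\Trbr(\sigma,M)$ for every $\tau$. Hence the double sum above is $[K:\mathbb Q]\sum_{\sigma\in P}sw_G(\sigma)\Trbr(\sigma,M)$, and the right‑hand side of Lemma~\ref{sw} collapses to $\tfrac1{|G|}\sum_{\sigma\in P}sw_G(\sigma)\Trbr(\sigma,M)=\Sw(M)$.

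I expect the main obstacles to be the two inputs to the first step rather than the averaging argument, which is formal once they are in place. These are: (i) extracting from Serre's framework the precise identity $\Sw(M)=\tfrac1{|G|}\sum_{\sigma\in P}sw_G(\sigma)\Trbr(\sigma,M)$ for a \emph{virtual} $\ell$‑modular $M$ — that is, that $sw_G$ is supported on the wild inertia and that the $\ell$‑modular Swan conductor is computed by pairing $sw_G$ against the Brauer character of $M$; and (ii) checking that $\Trbr(\sigma,M)$ genuinely lies in $\mathrm{Frac}(W(\Lambda))$ and is cyclotomic, so that one field $K$ as in Lemma~\ref{brtr} serves all $\sigma\in P$ simultaneously and $\Gal(K/\mathbb Q)$ acts on these Brauer traces through the cyclotomic character. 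Everything else is bookkeeping with the substitution $\sigma\mapsto\sigma^{a}$ and the $p'$‑power invariance of $sw_G$.
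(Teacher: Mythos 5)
Your proof is correct and follows essentially the same route as the paper: both start from Serre's identity $\Sw(M)=\frac{1}{|G|}\sum_{\sigma\in P}sw_G(\sigma)\Trbr(\sigma,M)$ and then apply the normalized trace $\frac{1}{[K:\mathbb{Q}]}\Tr_{K/\mathbb{Q}}$ together with Lemma~\ref{brtr}. The only difference is that the paper justifies the invariance of the sum under this trace in one line by citing the integrality of $\Sw(M)$, whereas you re-derive it explicitly via the cyclotomic character and the $p'$-power invariance of $sw_G$ — a valid, slightly longer, self-contained variant of the same step.
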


\begin{proof}
We may assume $M$ is a $\Lambda$-vector space with action of $G$. We have 
$$
\Sw(M)=\frac{1}{|G|}\sum_{\sigma\in P}sw_G(\sigma)\cdot\Trbr(\sigma,M).
$$
\cite[19.3]{Se}.
Since $\Sw(M)$ is an integer, the assertion follows from Lemma \ref{brtr}. 
\end{proof}

Let $K'$ be an intermediate extension of $L/K$ with Galois group $H=\Gal(L/K')$ 
and $M\in K(\Lambda[G])$ a virtual representation of $G$. 
We regard the restriction $M|_H$ as a representation of the absolute Galois group of $K'$, 
and hence $\Sw(M|_H)$ denotes the Swan conductor with respect to $K'$. 

\begin{Lem}
\label{if the same sw then the same fixed parts}
Assume that the extension $L/K$ is cyclic of degree $p^e$ for an integer $e\geq0$. 
For virtual representations $M\in K(\Lambda[G])$ and $M'\in K(\Lambda'[G])$, 
if $\rk(M)=\rk(M')$ and if $\Sw(M|_H)=\Sw(M'|_H)$ for every subgroup $H$ of $G$, 
then we have $\Gamma^\sigma(M)=\Gamma^\sigma(M')$ for every $\sigma\in G$. 
\end{Lem}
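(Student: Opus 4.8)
The plan is to run an induction along the chain of subgroups of $G$. As $G$ is cyclic of order $p^e$, its subgroups are totally ordered, say $\{1\}=H_0\subset H_1\subset\cdots\subset H_e=G$ with $|H_i|=p^i$. Since $\Gamma^\sigma$ depends on $\sigma$ only through the cyclic subgroup $\langle\sigma\rangle$ it generates, it is enough to show $\Gamma^{H_i}(M)=\Gamma^{H_i}(M')$ for $i=0,1,\dots,e$. The case $i=0$ is exactly the hypothesis $\rk(M)=\rk(M')$ (note $\Gamma^{\{1\}}=\rk$). So I would fix $i\ge 1$, assume $\Gamma^{H_k}(M)=\Gamma^{H_k}(M')$ for all $k<i$, and deduce the equality for $k=i$.

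For the inductive step I would feed $M|_{H_i}$ --- viewed as a virtual representation of $\Gal(L/L^{H_i})$, which is cyclic of order $p^i$ --- into Lemma \ref{sw}. In the resulting sum over $\sigma\in H_i$ I would group the terms by which $H_k$ the element $\sigma$ generates: the identity generates $H_0$, and there are $p^{k-1}(p-1)$ generators of $H_k$ for $1\le k\le i$. For $\sigma$ generating $H_k$ one has $\Gamma^\sigma(M|_{H_i})=\Gamma^{H_k}(M)$ and $\Gamma^{\sigma^p}(M|_{H_i})=\Gamma^{H_{k-1}}(M)$ for $k\ge1$ (while $\sigma=\sigma^p=1$ gives $\Gamma^{H_0}(M)$ for $k=0$), and the Swan character $sw_{H_i}(\sigma)$ depends only on $k$: because the subgroups of $H_i$ are totally ordered, all generators of $H_k$ lie in one and the same step of the lower-numbering ramification filtration of $L/L^{H_i}$. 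Thus $\Sw(M|_{H_i})$ becomes a $\mathbb{Q}$-linear combination of $\Gamma^{H_0}(M),\dots,\Gamma^{H_i}(M)$ whose coefficients are determined by the extension $L/L^{H_i}$ alone, and a direct count (using the prefactor $1/(|H_i|(p-1))$ and the $p^{i-1}(p-1)$ generators of $H_i$) shows that the coefficient of the leading term $\Gamma^{H_i}(M)$ equals $sw_{H_i}(\sigma_i)$ for a generator $\sigma_i$ of $H_i$. The same expansion with the same coefficients holds for $M'$, so subtracting and invoking the inductive hypothesis leaves $sw_{H_i}(\sigma_i)\bigl(\Gamma^{H_i}(M)-\Gamma^{H_i}(M')\bigr)=0$.

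It remains to check $sw_{H_i}(\sigma_i)\ne0$, which I expect to be the only step with genuine content. Since the residue field is algebraically closed and $H_i$ is a $p$-group, $L/L^{H_i}$ is totally and wildly ramified, so $H_i$ coincides with its first ramification subgroup; hence a generator $\sigma_i$ satisfies $i_{H_i}(\sigma_i)\ge2$, and by the formula for the Swan character in \cite[19.1]{Se} we get $sw_{H_i}(\sigma_i)=1-i_{H_i}(\sigma_i)\le-1$. Together with the hypothesis $\Sw(M|_{H_i})=\Sw(M'|_{H_i})$ this forces $\Gamma^{H_i}(M)=\Gamma^{H_i}(M')$, closing the induction; then $\Gamma^\sigma(M)=\Gamma^\sigma(M')$ for every $\sigma\in G$ follows at once since $\langle\sigma\rangle$ is one of the $H_i$. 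In short, the substance is to read off from Lemma \ref{sw} the triangular linear system relating the Swan conductors $\Sw(M|_H)$ to the fixed-point dimensions $\Gamma^{H}(M)$, and to observe that its diagonal entries --- Swan characters of generators, which are $\le-1$ --- are invertible.
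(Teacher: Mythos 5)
Your proof is correct and is essentially the paper's argument: the paper runs the same induction top-down (on $e$, restricting to the index-$p$ subgroup $G^p$ and then isolating the generators of $G$ in the formula of Lemma \ref{sw}), whereas you run it bottom-up along the chain $H_0\subset\cdots\subset H_e$, which amounts to the same triangular system with the same nonvanishing diagonal coefficients. The only cosmetic discrepancy is the sign of the Swan character on nontrivial elements (the paper asserts $sw_G(g)>0$, you get $sw_{H_i}(\sigma_i)\leq-1$ from Serre's convention); either way only the nonvanishing, guaranteed by total wild ramification, is used.
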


\begin{proof}
We prove the assertion by induction on $e$. 
The $e=0$ case is clear. 
Suppose $e\geq1$. 
Consider the subgroup $G^p$ of $G$ of index $p$. 
Then we can apply the induction hypothesis to $M|_{G^p}$ and $M'|_{G^p}$, 
and hence $\Gamma^g(M)=\Gamma^g(M')$ for every $g\in G^p$. 
Let $\sigma$ be a generator of the cyclic subgroup $G$. 
By Lemma \ref{sw}, 
$$
\Sw(M)=\frac{1}{|G|}\sum_{g\in G}sw_G(g)\cdot\frac{p\cdot\Gamma^g(M)-\Gamma^{g^p}(M)}{p-1},
$$
and similarly for $\Sw(M')$. 
Here, by the assumption, $\Sw(M)=\Sw(M')$. 
Since $\Gamma^g(M)=\Gamma^g(M')$ for every $g\in G^p$, 
we have 
$$
\left(\sum_{g\in G\setminus G^p}sw_G(g)\right)\cdot\Gamma^\sigma(M)=
\left(\sum_{g\in G\setminus G^p}sw_G(g)\right)\cdot\Gamma^\sigma(M).
$$
The extension $L/K$ is of degree a power of $p$ and hence $sw_G(g)>0$ for every $g\in G\setminus\{1\}$. 
Thus we have $\Gamma^\sigma(M)=\Gamma^\sigma(M')$ and the assertion follows. 
\end{proof}

We denote the absolute Galois group of a field $F$ by $G_F$. 
In Section 5, we will use the induction formula for the Swan conductor: 
\begin{Lem}
\label{conductor discriminant}
Let $K'$ be a finite extension of $K$ 
and $M'$ a continuous representation of $G_{K'}$ with respect to the discrete topology of $M'$. 
Then we have 
$$\Sw (\Ind_{G_{K'}}^{G_K}M')=\Sw M'+\dim M'\cdot(d+[K'_s:K]-1),$$ 
where 
$\Ind_{G_{K'}}^{G_K}M'$ denotes the induced representation, 
$K'_s$ the maximal separable extension in $K'/K$, 
and $d$ the length of the discriminant of the extension $K'_s/K$. 

\end{Lem}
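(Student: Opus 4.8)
**Plan for proving Lemma \ref{conductor discriminant} (the conductor–discriminant / induction formula for the Swan conductor).**

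The strategy is to reduce the general case to the classical case where $K'/K$ is separable, for which the statement is the standard conductor–discriminant formula, and then to handle the possible inseparability by a purely numerical argument showing both sides are unchanged when $K'$ is replaced by $K'_s$. First I would dispose of the inseparable part: if $K'/K$ is not separable, then $G_{K'}=G_{K'_s}$ since a purely inseparable extension induces an isomorphism on absolute Galois groups (the fields have the same separable closure), so $\Ind_{G_{K'}}^{G_K}M'=\Ind_{G_{K'_s}}^{G_K}M'$ and $\Sw M'$ computed over $K'$ equals $\Sw M'$ computed over $K'_s$ — the Swan conductor only sees the wild inertia, which factors through $G_{K'_s}$. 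Moreover $[K'_s:K]$ and the discriminant length $d$ of $K'_s/K$ are by definition the same whether we start from $K'$ or from $K'_s$. Hence we may assume $K'=K'_s$ is separable over $K$ from the outset.

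Now assume $K'/K$ is finite separable of degree $n=[K':K]$. I would embed $K'$ in a finite Galois extension $L/K$ with group $G=\Gal(L/K)$, put $H=\Gal(L/K')$, so that $\Ind_{G_{K'}}^{G_K}M'$ corresponds, as a $G$-representation up to the tame part, to $\Ind_H^G(M')$. The Swan conductor is an invariant defined via the Swan character $sw_G$, which is itself the Artin character $a_G$ minus the augmentation-type correction (the difference between the ramification character and the regular-minus-trivial character); concretely $sw_G = a_G - (\mathrm{reg}_G - \mathbf{1})$ where $a_G$ is the Artin character. The classical conductor–discriminant formula, or rather its Swan-conductor refinement found in Serre's \emph{Corps Locaux} (the Swan version of VI §2), gives exactly
\[
\Sw(\Ind_H^G M') = \Sw_{K'}(M') + \dim M' \cdot \Sw(\Ind_H^G \mathbf{1}_H),
\]
and $\Sw(\Ind_H^G \mathbf{1}_H) = \Sw(\mathbb{Q}_\ell[G/H])$ is precisely the Swan part of the different/discriminant of $K'/K$. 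One then identifies this last quantity with $d + n - 1$: the \emph{Artin} conductor of the permutation module $\mathbb{Q}_\ell[G/H]$ is the valuation $d$ of the discriminant of $K'/K$ (this is the conductor–discriminant formula proper), and passing from Artin conductor to Swan conductor subtracts the dimension of the non-invariants, namely $\dim \mathbb{Q}_\ell[G/H] - \dim(\mathbb{Q}_\ell[G/H])^{I} = n - 1$ when the residue field is algebraically closed so that $I = G$ and the invariants are $1$-dimensional. This yields $\Sw(\Ind_H^G \mathbf{1}_H) = d - (n-1)$... but here one must be careful with sign conventions: since we want $d + [K'_s:K] - 1$, the correct bookkeeping is that the \emph{Swan conductor of the discriminant module} equals $d$ minus the tame contribution, and reassembling gives the stated formula; I would verify the signs by testing on a totally ramified cyclic example.

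The main obstacle I anticipate is precisely the sign/normalization bookkeeping in the last step — relating the length $d$ of the discriminant of $K'_s/K$, the Artin conductor of the permutation representation $\mathbb{Q}_\ell[G/H]$, and its Swan conductor, with the correct tame correction term $[K'_s:K]-1$. The residue field being algebraically closed simplifies matters (no unramified part, inertia is everything, the tame quotient contributes the full $n-1$), so I would lean on that hypothesis throughout. A secondary, more routine, point is justifying that the induced representation in the Galois-theoretic sense matches $\Ind_{G_{K'}}^{G_K}M'$ and that Swan conductors are compatible with this identification; this is standard and I would cite Serre. In short: (1) reduce to $K'=K'_s$ separable via $G_{K'}=G_{K'_s}$; (2) invoke Serre's Swan-conductor induction formula to get $\Sw(\Ind M') = \Sw M' + \dim M' \cdot \Sw(\mathbb{Q}_\ell[G/H])$; (3) compute $\Sw(\mathbb{Q}_\ell[G/H]) = d + [K'_s:K] - 1$ using the conductor–discriminant formula for Artin conductors plus the algebraically-closed-residue-field tame correction.
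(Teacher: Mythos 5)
Your two-step architecture---treat the separable case by the classical induction/conductor--discriminant formula, and reduce the inseparable part via the identification $G_{K'}\cong G_{K'_s}$---is the same as the paper's, which quotes \cite[Proposition 1.(c)]{R} for the separable case and devotes Lemmas \ref{p insep} and \ref{p insep ext of cdvf} to the inseparable one. However, your justification of the inseparable reduction is a genuine gap. The isomorphism $G_{K'}\cong G_{K'_s}$ identifies the groups and hence the representations, but the Swan conductor is not a function of the abstract group and the module alone: it is computed from the lower-numbering ramification filtration, which depends on the valuation of the base field. So ``the Swan conductor only sees the wild inertia, which factors through $G_{K'_s}$'' does not prove $\Sw_{K'}(M')=\Sw_{K'_s}(M')$. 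What has to be checked is that for a finite Galois extension $L/K'_s$ with composite $L'=LK'$, the Swan characters $sw_{L'/K'}$ and $sw_{L/K'_s}$ agree under the identification of Galois groups; this holds because $\pi_{L'}=\pi_L^{1/p^s}$ is a uniformizer of $L'$ and $v_{L'}(\sigma\pi_{L'}-\pi_{L'})=v_L\bigl((\sigma\pi_{L'}-\pi_{L'})^{p^s}\bigr)=v_L(\sigma\pi_L-\pi_L)$, Frobenius being additive and commuting with $\sigma$. That computation is exactly the content of the paper's Lemma \ref{p insep}; your proposal omits it.

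More seriously, your separable case does not close. You correctly derive $\Sw(\Ind_H^G\mathbf{1}_H)=a(\mathbb{Q}_\ell[G/H])-(n-1)=d-(n-1)$ (with $n=[K':K]$ and all of $G_K$ equal to inertia since the residue field is algebraically closed), you note that this disagrees with the target $d+(n-1)$, and you then defer the reconciliation to unspecified ``bookkeeping'' and a test you have not carried out. That reconciliation is the entire content of the step. If you do run the test you propose---say $K'=K[x]/(x^p-x-t^{-1})$ over $K=k((t))$, where $d=2(p-1)$ while $\Sw(\Ind_{G_{K'}}^{G_K}\mathbf{1})=p-1$---you get $d-(n-1)$, not $d+(n-1)$, so the discrepancy is not an artifact of sign conventions that can be massaged away. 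You must either identify a normalization of $d$ under which the stated right-hand side is correct, or conclude that the displayed formula should read $d-[K'_s:K]+1$. (In the paper's applications, Lemma \ref{red2} and Corollary \ref{sc and sgc}, only the difference of the two sides for $\mathcal{F}$ and $\mathcal{F}'$ is used, so the term $\dim M'\cdot(d\pm(n-1))$ cancels either way; but a proof of the lemma as stated cannot leave this unresolved.) As written, your argument proves a formula different from the one you set out to prove.
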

\begin{proof}
If $K'/K$ is separable then the assertion follows from  \cite[Proposition 1.(c)]{R}. 
Hence we may assume $K'/K$ is purely inseparable. 
Here, $\Ind_{G_{K'}}^{G_K}M'$ of $G_K$ is 
the representation induced by $M'$ and by the isomorphism $G_{K'}\to G_K$. 
Hence, by Lemma \ref{p insep} below, we have $sw_{L/K}=sw_{L'/K'}$ for finite Galois extension $L$ of $K$ 
and for the composite $L'$ of $L$ and $K'$. Thus the assertion follows. 
\end{proof}

\begin{Lem}
\label{p insep}
Let $K'$ be a finite purely inseparable extension of $K$ of degree $p^s$. 
Let $L$ be a finite Galois extension of $K$ with Galois group $G$ 
and $L'$ a composite of $L$ and $K'$. 
Take a uniformizer $\pi$ of $L$. 
Then $\pi^{1/p^s}$ belongs to $L'$ and is a uniformizer of $L'$ 
and we have $v(\sigma\pi-\pi)=v'(\sigma\pi^{1/p^s}-\pi^{1/p^s})$ 
for any element $\sigma\neq1$ of $G$, 
where $v$ and $v'$ are the valuation of $L$ and $L'$ respectively. 
\end{Lem}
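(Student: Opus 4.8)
The plan is to prove Lemma \ref{p insep} by an explicit computation in the extension $L'/L$, which is purely inseparable of degree $p^s$ obtained by adjoining a $p^s$-th root of a uniformizer. First I would verify that $\pi^{1/p^s}$ indeed lies in $L' = L\cdot K'$ and is a uniformizer: since $K'/K$ is purely inseparable of degree $p^s$ with perfect residue field (by hypothesis on $K$), we may write $K' = K(\alpha)$ with $\alpha^{p^s}$ a uniformizer of $K$ up to a unit, and after adjusting by the perfectness of the residue field and taking $p^s$-th roots of units (available in the residue field, hence liftable), one sees $L'=L(\pi^{1/p^s})$ with $v'(\pi^{1/p^s})=1$ where $v'$ is normalized so that $v'(L'^{\times})=\mathbb{Z}$; in particular $v' = p^s\cdot v$ after identifying value groups, or rather $v'$ restricted to $L$ equals $p^s\cdot v$.

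The key step is the valuation identity $v(\sigma\pi-\pi)=v'(\sigma\pi^{1/p^s}-\pi^{1/p^s})$ for $\sigma\neq 1$ in $G$. Here $\sigma$ extends uniquely to $L'$ because $L'/L$ is purely inseparable, so the left side makes sense. I would write $\sigma\pi = \pi\cdot u$ for a unit $u$, so $\sigma\pi - \pi = \pi(u-1)$, and correspondingly $\sigma\pi^{1/p^s} = \pi^{1/p^s}\cdot u^{1/p^s}$ (the unique $p^s$-th root compatible with the action), giving $\sigma\pi^{1/p^s} - \pi^{1/p^s} = \pi^{1/p^s}(u^{1/p^s}-1)$. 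The point is then that $v'(u^{1/p^s}-1) = p^s\cdot v(u-1)$ when we measure with the $L$-normalization, equivalently $v(u^{1/p^s}-1)=v(u-1)$ in the common value group: this follows from $u-1 = (u^{1/p^s})^{p^s} - 1 = (u^{1/p^s}-1)^{p^s} + (\text{lower order in } u^{1/p^s}-1)$, but actually in characteristic $p$ one has $(u^{1/p^s})^{p^s}-1 = (u^{1/p^s}-1)^{p^s}$ exactly, so $v(u-1) = p^s\cdot v(u^{1/p^s}-1)$ in the $L$-valuation, which after rescaling by $p^s$ to the $L'$-valuation gives precisely $v'(\sigma\pi^{1/p^s}-\pi^{1/p^s}) = v'(\pi^{1/p^s}) + v'(u^{1/p^s}-1) = 1 + v(u-1) = v(\pi) + v(\sigma\pi-\pi)/v(\pi)\cdots$. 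I would need to be careful tracking the two normalizations, but the arithmetic core is the Frobenius identity $(x-1)^{p^s}=x^{p^s}-1$ in characteristic $p$.

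The main obstacle I anticipate is purely bookkeeping: keeping the two valuations $v$ and $v'$ straight, since $L'/L$ is purely inseparable the ramification index is "$1$" in the naive sense but the value groups are identified via $v' = p^s v$ on $L$, and one must confirm that $e(L'/L)=p^s$ with trivial residue extension (using perfectness of the residue field of $K$, hence of $L$). Once the normalizations are pinned down, the identity $v(\sigma\pi-\pi)=v'(\sigma\pi^{1/p^s}-\pi^{1/p^s})$ — where the right side uses the $L'$-normalization making $\pi^{1/p^s}$ a uniformizer, so that $v'$ and $v$ agree on $L$ when both are normalized by their own uniformizers in the sense that $v'(a) = v(a)$ for $a\in L$ with $v$ normalized by $\pi$ and $v'$ by $\pi^{1/p^s}$ is false by a factor $p^s$; rather the statement as written must intend $v'$ normalized so $v'(L^\times) = \mathbb{Z}$ too, i.e. $v'|_L = v$ — becomes immediate from $\sigma\pi^{1/p^s}-\pi^{1/p^s} = \pi^{1/p^s}(u^{1/p^s}-1)$ together with the observation that under this normalization $v'(\pi^{1/p^s}) = 1/p^s \cdot 0 + \cdots$; concretely since $(u^{1/p^s}-1)^{p^s} = u-1$ one gets $v'(u^{1/p^s}-1) = \frac{1}{p^s}v'(u-1) = \frac{1}{p^s}v(u-1)$ and $v'(\pi^{1/p^s}) = \frac{1}{p^s}v(\pi) = \frac{1}{p^s}$, so $v'(\sigma\pi^{1/p^s}-\pi^{1/p^s}) = \frac{1}{p^s}(1 + v(u-1))$ — hmm, this does not match either. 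The resolution is that the intended statement compares $v(\sigma\pi - \pi)$ with $v'(\sigma\pi^{1/p^s}-\pi^{1/p^s})$ each using \emph{its own} uniformizer-normalization, and then both equal $\frac{\sigma\pi-\pi}{\pi}$-order plus $1$, i.e. $v'(\sigma\pi^{1/p^s}-\pi^{1/p^s}) = 1 + v'(u^{1/p^s}-1)$ where $v'$ normalized by $\pi^{1/p^s}$ satisfies $v'(u^{1/p^s}-1) = v(u-1)$ because raising to $p^s$ multiplies $v'$-values by... no: I would simply set $w = u^{1/p^s}-1$, note $w^{p^s} = u-1$, so in \emph{any} valuation $\mathrm{val}(w) = \frac{1}{p^s}\mathrm{val}(u-1)$; taking $\mathrm{val} = v'$ normalized by $\pi^{1/p^s}$ and using $v'|_{L^\times} = p^s\cdot v|_{L^\times}$ gives $v'(w) = \frac{1}{p^s}\cdot p^s\cdot v(u-1) = v(u-1)$, hence $v'(\sigma\pi^{1/p^s}-\pi^{1/p^s}) = v'(\pi^{1/p^s}) + v'(w) = 1 + v(u-1) = v(\pi) + v(u-1) = v(\pi(u-1)) = v(\sigma\pi-\pi)$, as desired. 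So the proof is a short chain of identities; the only real content is $e(L'/L) = p^s$ and the Frobenius identity, and I would present it in that compact form.
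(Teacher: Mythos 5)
Your proposal is correct and follows essentially the same route as the paper: identify $L'=L(\pi^{1/p^s})$ using that a discrete valuation field with perfect residue field has a unique purely inseparable extension of each degree $p^s$, then transfer valuations via the Frobenius identity $(a-b)^{p^s}=a^{p^s}-b^{p^s}$. The paper packages the second step more cleanly (the $p^s$-th power map is an isomorphism $\mathcal{O}_{L'}\to\mathcal{O}_L$, so $v'(x)=v(x^{p^s})$ applied directly to $x=\sigma\pi^{1/p^s}-\pi^{1/p^s}$, avoiding your normalization bookkeeping) and first reduces to the case of complete $K$ so that excellence justifies $[K:K^p]=p$, a point you gloss over.
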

\begin{proof}
We may assume $K$ is complete with respect to its valuation. 
Since every complete discrete valuation ring is excellent we can apply Lemma \ref{p insep ext of cdvf} below 
and hence $\pi^{1/p^s}$ belongs to $L'$ and is a uniformizer of $L'$. 
The $p^s$-th power map $L'\to L$ induces an isomorphism 
	between their rings of integers 
	and hence we have $v(x^{p^s})=v'(x)$ for every $x\in L'$. 
Thus the assertion follows. 
\end{proof}
\begin{Lem}
\label{p insep ext of cdvf}
Let $K$ be a discrete valuation field of characteristic $p>0$ with perfect residue field 
and $\pi$ a uniformizer of $K$. 
Assume that the ring of integer $\mathcal{O}_K$ is excellent. 
Then any finite purely inseparable extension of $K$ 
is of the form $K(\pi^{1/p^s})$ for some integer $s\geq0$. 
\end{Lem}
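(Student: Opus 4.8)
The plan is to reduce the statement to the single fact that excellence of $\mathcal{O}_K$ forces $[K:K^p]=p$, with $\pi$ a $p$-basis of $K$. Granting this, the lemma is pure inseparable field theory: since $\{\pi\}$ is a $p$-basis, one has $K^{1/p^n}=K(\pi^{1/p^n})$ and $[K^{1/p^n}:K]=p^n$ for every $n\geq 0$; if $K'/K$ is finite purely inseparable of degree $p^s$, then $K'^{p^m}\subseteq K$ for some $m$, so $K'\subseteq K^{1/p^m}=K(\pi^{1/p^m})$. Now $\pi\notin K^p$ for valuation reasons, so $X^{p^m}-\pi$ is irreducible and $K(\pi^{1/p^m})/K$ is a simple purely inseparable extension, whose intermediate fields form the chain $K\subset K(\pi^{1/p})\subset\cdots\subset K(\pi^{1/p^m})$ (a standard fact about simple purely inseparable extensions). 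Hence $K'=K(\pi^{1/p^i})$ for some $i$, and comparing degrees gives $i=s$, i.e.\ $K'=K(\pi^{1/p^s})$.

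So the real content is the equality $[K:K^p]=p$, and here I would pass to the completion. By Cohen's structure theorem (equal characteristic) $\widehat{\mathcal{O}_K}\cong k[[\pi]]$ with $k$ the perfect residue field, and perfectness of $k$ gives $\widehat{K}^p=k((\pi^p))$, hence $[\widehat{K}:\widehat{K}^p]=p$ and $\widehat{K}^{1/p}=\widehat{K}(\pi^{1/p})$. Take any $b\in K\setminus K^p$ and consider the degree-$p$ purely inseparable extension $K(b^{1/p})$. Its base change $K(b^{1/p})\otimes_K\widehat{K}$ is finite over the field $\widehat{K}$; it is local, because $\Spec K(b^{1/p})\to\Spec K$ is radicial so its base change is a one-point scheme; and it is reduced, because $\widehat{K}/K$ is a separable field extension. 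This last point is where the hypothesis enters: $\mathcal{O}_K$ excellent implies it is a $G$-ring, and the only nontrivial formal fibre of the discrete valuation ring $\mathcal{O}_K$ is the generic one $\widehat{\mathcal{O}_K}\otimes_{\mathcal{O}_K}K=\widehat{K}$, which is therefore geometrically regular over $K$, i.e.\ $\widehat{K}/K$ is separable. A local Artinian reduced ring is a field, so $K(b^{1/p})\otimes_K\widehat{K}$ is a field, of degree $p$ and purely inseparable over $\widehat{K}$, hence equal to $\widehat{K}(\pi^{1/p})$; thus $b^{1/p}\in\widehat{K}(\pi^{1/p})$. Writing $L=K(\pi^{1/p})$, so that $L\otimes_K\widehat{K}=\widehat{K}(\pi^{1/p})$ as a field, one has $(L\otimes_K\widehat{K})\otimes_L L^{1/p}=\widehat{K}\otimes_K L^{1/p}$, again reduced by separability of $\widehat{K}/K$; hence $L\otimes_K\widehat{K}$ and $L^{1/p}$ meet only in $L$ inside a common overfield, so $b^{1/p}\in L$. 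As $b$ was arbitrary, $K^{1/p}\subseteq K(\pi^{1/p})$, whence $[K:K^p]=p$.

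The genuinely delicate step is the second paragraph's use of separability of $\widehat{K}/K$ to descend the containment $b^{1/p}\in\widehat{K}(\pi^{1/p})$ down to $b^{1/p}\in K(\pi^{1/p})$; this is exactly where ``$\mathcal{O}_K$ excellent'' is needed rather than merely ``$\mathcal{O}_K$ Nagata'', and Nagata's classical examples of non-excellent discrete valuation rings in characteristic $p$ (with $[K:K^p]$ infinite) show that the hypothesis cannot be dropped. Everything else --- the subfield structure of simple purely inseparable extensions, Cohen's theorem over a perfect residue field, and the computation $[\widehat{K}:\widehat{K}^p]=p$ --- is routine.
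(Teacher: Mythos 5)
Your proof is correct, but the route to the crucial equality $[K:K^p]=p$ is genuinely different from the paper's. The paper never passes to the completion: it observes that $\mathcal{O}_K$ is the integral closure of the excellent (hence Nagata) discrete valuation ring $\mathcal{O}_K^p$ in $K$, so this integral closure is finite over $\mathcal{O}_K^p$, and then the usual $\sum e_if_i$ formula for a finite normalization, with $e=p$ and $f=1$ (perfect residue field), yields $[K:K^p]=ef=p$. You instead use the $G$-ring half of excellence: geometric regularity of the generic formal fibre gives separability of $\widehat{K}/K$, Cohen's structure theorem computes $\widehat{K}^{1/p}=\widehat{K}(\pi^{1/p})$, and a linear-disjointness descent (reducedness of $\widehat{K}\otimes_K L^{1/p}$, combined with the nilpotency of $x\otimes1-1\otimes x$ for $x$ purely inseparable over $L$) pulls the containment $K^{1/p}\subseteq K(\pi^{1/p})$ back down to $K$; that descent step is valid precisely because the elements in question are purely inseparable over $L$, reducedness alone would not suffice in general. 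What each approach buys: the paper's argument is shorter and purely about the finite normalization of $\mathcal{O}_K^p$, while yours isolates explicitly which consequence of excellence is used and makes the role of $\widehat{K}/K$ visible. The concluding field-theoretic step is essentially the same in both (the paper simply notes that a purely inseparable extension of degree $p^s$ lies in $K^{1/p^s}=K(\pi^{1/p^s})$ and compares degrees, rather than listing the intermediate fields of a simple purely inseparable extension). One caveat about your closing remark: for a discrete valuation ring the Nagata property and excellence are equivalent (both amount to separability of $\widehat{K}/K$), so it is not accurate to present the hypothesis as ``excellent rather than merely Nagata''; indeed the paper's own proof uses only the finiteness of the normalization, i.e.\ the Nagata property.
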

\begin{proof}
We note that $K^p$ is the field of fraction of the excellent discrete valuation ring $\mathcal{O}_K^p$ 
	and that $\mathcal{O}_K$ is the integral closure of $\mathcal{O}_K^p$ in $K$, 
	which is finite over $\mathcal{O}_K^p$. 
Since the ramification index of the extension $K/K^p$ is $p$ 
and the extension of the residue fields is trivial, 
we have $[K:K^p]=p$ and hence $[K^{1/p^s}:K]=p^s$ for every $s\geq0$. 
Since every finite purely inseparable extension of $K$ of degree $p^s$ 
	is contained in $K^{1/p^s}$, the assertion follows. 
\end{proof}



\section{Same wild ramification and Same conductors}
Let $S$ be an excellent noetherian scheme. 
We always assume that 
closed points of the base scheme $S$ have perfect residue fields. 
We define the terminologies {\it same virtual wild ramification} and {\it same conductors} 
for complexes on a scheme over $S$. 
The terminology {\it same virtual wild ramification} 
is a modification of the terminology {\it same wild ramification}  
in \cite[Definition 5.1]{SY}. 

\subsection{Same wild ramification}
In this subsection, we do not use the assumption 
that closed points of $S$ have perfect residue fields. 

We note that by Nagata's compactification theorem, 
separated scheme of finite type over a noetherian scheme have a compactification. 

We say an element $\sigma$ of a pro-finite group $G$ is {\it pro-p} 
if the closed subgroup $\overline{\langle\sigma\rangle}$ of $G$ is pro-$p$. 
For a pro-$p$ element $\sigma\in G$, we put $\Gamma^\sigma=\Gamma^{\overline{\langle\sigma\rangle}}$. 
For the definition of $\Gamma^{\overline{\langle\sigma\rangle}}$, see Section \ref{swbr}. 

For the spectrum $x=\Spec F$ of a field $F$, 
we denote the characteristic of $F$ by $p_x$. 

\begin{Def}
Let $\overline{X}$ be a noetherian normal connected scheme 
and $X$ a dense open subscheme of $\overline{X}$. 
For smooth complexes $\mathcal{F}$ and $\mathcal{F}'$ 
of $\Lambda$-modules and $\Lambda'$-modules respectively on $X$, 
we say $\mathcal{F}$ and $\mathcal{F}'$ have {\it the same virtual wild ramification} along $\overline{X}\setminus X$ 
if for any geometric point $x\to \overline{X}$ 
and any pro-$p_x$ element $\sigma$ 
of $\pi_1(\overline{X}_{(x)}\times_{\overline{X}}X,y)$, 
where $y$ is a geometric point, 
we have $\Gamma ^{\sigma}([\mathcal{F}_y])=\Gamma ^{\sigma}([\mathcal{F}'_y])$. 
\end{Def}

To show $\mathcal{F}$ and $\mathcal{F}'$ have the same virtual wild ramification along $\overline{X}\setminus X$, 
it suffices to check the above condition for a geometric point over every closed point, 
i.e. it suffices to check that 
for a geometric point $x\to \overline{X}$ {\it over every closed point} 
and every pro-$p_x$ element $\sigma$ 
of $\pi_1(\overline{X}_{(x)}\times_{\overline{X}}X,y)$, 
where $y$ is a geometric point, 
we have $\Gamma ^{\sigma}([\mathcal{F}_y])=\Gamma ^{\sigma}([\mathcal{F}'_y])$. 
Note that 
for a geometric point $x'$ of $\overline{X}$ and a geometric point $x$ which specializes $x'$, 
if we choose a geometric point $y'$ of $\overline{X}_{(x')}\times_{\overline{X}}X$, 
the specialization morphism $\overline{X}_{(x')}\to\overline{X}_{(x)}$ 
induces a homomorphism 
$\pi_1(\overline{X}_{(x')}\times_{\overline{X}}X,y')\to\pi_1(\overline{X}_{(x)}\times_{\overline{X}}X,y)$, 
where $y$ is the image of $y'$. 

\begin{Def}[cf. Definition 5.1 in \cite{SY}]
\label{svwr}
Let $X$ be an $S$-scheme separated of finite type 
and $\mathcal{F}$ and $\mathcal{F}'$ constructible complexes 
of $\Lambda$-modules and $\Lambda '$-modules respectively on $X$. 
\begin{enumerate}[(i)]
\item
\label{sm case}
Assume that $X$ is normal and connected 
and that $\mathcal{F}$ and $\mathcal{F}'$ are smooth. 
We say $\mathcal{F}$ and $\mathcal{F}'$ have the {\it same virtual wild ramification} over $S$ 
if there exist 
a normal compactification $\overline{X}$ of $X$ over $S$ 
such that $\mathcal{F}$ and $\mathcal{F}'$ have the same virtual wild ramification along $\overline{X}\setminus X$. 
\item 
We say $\mathcal{F}$ and $\mathcal{F}'$ have the {\it same virtual wild ramification} over $S$ 
if there exists a decomposition $X=\coprod_i X_i$ 
such that 
$X_i$ is locally closed connected normal subscheme 
and that $\mathcal{F}|_{X_i}$ and $\mathcal{F}'|_{X_i}$ are smooth complexes  
and have the same virtual wild ramification in the sense of (\ref{sm case}) for every $i$. 
\end{enumerate}
\end{Def}
If a $G$-torsor is given, 
we use the terminology same virtual wild ramification 
also for virtual representations of $G$.

\begin{Rem}
	\begin{enumerate}[(i)]
\item
Our definition of ``same virtual wild ramification'' is weaker than that in \cite{I} and \cite{V}. 
Consider the case where $S$ is a spectrum of a field of characteristic $p>0$ 
or that of a discrete valuation ring with residue field of characteristic $p>0$. 
In \cite[D\'efinition 2.3.1]{V}, 
for smooth complexes $\mathcal{F}$ and $\mathcal{F}'$ of $\Lambda$-modules 
on a normal connected scheme $X$ separated of finite type over $S$, 
the property that $\mathcal{F}$ and $\mathcal{F}'$ have the same virtual wild ramification 
is defined by the following condition: 
There exists a normal compactification $\overline{X}$ of $X$ over $S$ 
such that for every geometric point $x$ of $\overline{X}$ 
and every pro-$p$ element $\sigma\in\pi_1(\overline{X}_{(x)}\times_{\overline{X}}X,y)$, 
we have 
$\Trbr(\sigma,[\mathcal{F}_y])=\Trbr(\sigma,[\mathcal{F}'_y])$. 
Thus our ``same virtual wild ramification'' is weaker than that in \cite{V}. 
Indeed, for a finite group $G$, 
an element $\sigma\in G$ of order a power of $p$, 
and an element $M\in K(\Lambda[G])$, 
we have 
$$\Gamma^\sigma(M)=\frac{1}{|\langle\sigma\rangle|}\sum_{g\in\langle\sigma\rangle}\Trbr(g,M)$$ 
(c.f. \cite[1.4.7]{I}).

\item
If $S$ is a spectrum of a field and 
$\mathcal{F}$ and $\mathcal{F}'$ are constructible sheaves, 
our ``same virtual wild ramification'' coincides with 
``same wild ramification'' in \cite[Definition 5.1]{SY}. 
\end{enumerate}
\end{Rem}

\begin{Lem}
\label{pullback}
Let $S'\to S$ be a morphism of excellent noetherian schemes. 
Consider a commutative diagram
$$
\begin{CD}
X'@>{g}>>X\\
@VVV@VVV\\
S'@>>>S
\end{CD}
$$
such that the vertical arrows are separated of finite type. 
Let $\mathcal{F}$ and $\mathcal{F}'$ be constructible complexes of sheaves of $\Lambda$-modules and $\Lambda'$-modules respectively on $X$. 
If $\mathcal{F}$ and $\mathcal{F}'$ have the same virtual wild ramification over $S$, 
then $g^*\mathcal{F}$ and $g^*\mathcal{F}'$ have the same virtual wild ramification over $S'$. 
\end{Lem}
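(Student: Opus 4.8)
The plan is to reduce to the case where $X$ is normal and connected and $\mathcal{F},\mathcal{F}'$ are smooth, and then to transport a witnessing compactification from $X$ over to $X'$. For the reduction, choose a decomposition $X=\coprod_iX_i$ into locally closed connected normal subschemes witnessing that $\mathcal{F},\mathcal{F}'$ have the same virtual wild ramification over $S$, as in Definition \ref{svwr}. Each $g^{-1}(X_i)=X'\times_X X_i$ is locally closed in $X'$, and $(g^*\mathcal{F})|_{g^{-1}(X_i)}$ is the pullback of the smooth complex $\mathcal{F}|_{X_i}$ along the projection $g^{-1}(X_i)\to X_i$, hence smooth; likewise for $\mathcal{F}'$. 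Replacing $g^{-1}(X_i)$ by its reduction, removing the non-normal locus (closed, since $X'$ is excellent over $S'$), passing to connected components, and iterating by noetherian induction, we obtain a finer decomposition $X'=\coprod_{i,j}Y_{ij}$ into connected normal locally closed subschemes on which $g^*\mathcal{F}$ and $g^*\mathcal{F}'$ are smooth; writing $h_{ij}\colon Y_{ij}\to X_i$ for the restriction of $g$, we have $(g^*\mathcal{F})|_{Y_{ij}}=h_{ij}^*(\mathcal{F}|_{X_i})$ and $(g^*\mathcal{F}')|_{Y_{ij}}=h_{ij}^*(\mathcal{F}'|_{X_i})$. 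Hence it suffices to prove: if $X$ is normal connected, $\mathcal{F},\mathcal{F}'$ are smooth with the same virtual wild ramification over $S$ in the sense of Definition \ref{svwr}(i), and $h\colon Y\to X$ is an $S$-morphism with $Y$ connected, normal, and separated of finite type over $S'$, then $h^*\mathcal{F}$ and $h^*\mathcal{F}'$ have the same virtual wild ramification over $S'$.

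To prove this, fix a normal compactification $\overline{X}$ of $X$ over $S$ along which $\mathcal{F}$ and $\mathcal{F}'$ have the same virtual wild ramification. The morphism $Y\to\overline{X}\times_S S'$ induced by $h$ and $Y\to S'$ is separated of finite type, and $\overline{X}\times_S S'$ is noetherian and proper over $S'$; by Nagata's compactification theorem (taking the closure of the integral scheme $Y$) there is a dense open immersion $Y\hookrightarrow\overline{Y}_0$ with $\overline{Y}_0\to\overline{X}\times_S S'$ proper, hence $\overline{Y}_0$ proper over $S'$. Let $\overline{Y}$ be the normalization of $\overline{Y}_0$; since $\overline{Y}_0$ is excellent, $\overline{Y}\to\overline{Y}_0$ is finite, so $\overline{Y}$ is a normal compactification of $Y$ over $S'$, and it is connected because $Y$ is integral and dense in it. The composite $\bar h\colon\overline{Y}\to\overline{Y}_0\to\overline{X}\times_S S'\to\overline{X}$ extends $h$ and carries $Y$ into $X$.

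For the verification, fix a geometric point $x\to\overline{Y}$ and put $x'=\bar h\circ x\to\overline{X}$, so $p_{x'}=p_x$. Then $\bar h$ induces compatibly a morphism of strict localizations $\overline{Y}_{(x)}\to\overline{X}_{(x')}$, hence a morphism $V:=\overline{Y}_{(x)}\times_{\overline{Y}}Y\to\overline{X}_{(x')}\times_{\overline{X}}X=:U$ lying over $h\colon Y\to X$. Choose a geometric point $y$ of $V$ with image $y'$ in $U$ and let $\phi\colon\pi_1(V,y)\to\pi_1(U,y')$ be the induced homomorphism. Since $(h^*\mathcal{F})|_V$ is the pullback of $\mathcal{F}|_U$ along $V\to U$, we get $[(h^*\mathcal{F})_y]=\phi^*[\mathcal{F}_{y'}]$ in the relevant Grothendieck group, and likewise for $\mathcal{F}'$. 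For a pro-$p_x$ element $\sigma\in\pi_1(V,y)$, the element $\phi(\sigma)$ is pro-$p_{x'}$ and $\overline{\langle\phi(\sigma)\rangle}=\phi\bigl(\overline{\langle\sigma\rangle}\bigr)$ by continuity and compactness, so $\Gamma^\sigma([(h^*\mathcal{F})_y])=\Gamma^{\phi(\sigma)}([\mathcal{F}_{y'}])$ and $\Gamma^\sigma([(h^*\mathcal{F}')_y])=\Gamma^{\phi(\sigma)}([\mathcal{F}'_{y'}])$. These agree because $\mathcal{F}$ and $\mathcal{F}'$ have the same virtual wild ramification along $\overline{X}\setminus X$. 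Hence $h^*\mathcal{F}$ and $h^*\mathcal{F}'$ have the same virtual wild ramification along $\overline{Y}\setminus Y$, which completes the reduced statement and the lemma.

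I expect the main obstacle to be producing a \emph{normal} compactification $\overline{Y}$ of $Y$ over $S'$ that also admits a morphism to $\overline{X}$ extending $h$; the device of applying Nagata relative to $\overline{X}\times_S S'$ and then normalizing (finite by excellence) resolves it. The remaining ingredients — functoriality of strict localizations and fundamental groups, compatibility of pullback of smooth complexes with the induced $\pi_1$-action, and the stratification argument in the first paragraph — are routine.
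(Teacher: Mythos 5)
Your proposal is correct and follows essentially the same route as the paper: dévissage to the normal connected smooth case, choice of a witnessing normal compactification $\overline{X}$, construction of a compatible normal compactification of the source with a morphism extending $g$, and then functoriality of strict localizations, fundamental groups, and $\Gamma^\sigma$. The only difference is that you spell out the construction of the compatible compactification (closure in $\overline{X}\times_S S'$ plus normalization, finite by excellence), which the paper simply asserts.
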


\begin{proof}
We note that the normal locus of an excellent scheme is open. 
By devissage, we may assume 
$X$ and $X'$ are normal and connected  
and $\mathcal{F}$ and $\mathcal{F}'$ are smooth complexes. 
Then we take a normal compactification $\overline{X}$ of $X$ over $S$ 
such that $\mathcal{F}$ and $\mathcal{F}'$ have the same virtual wild ramification along $\overline{X}\setminus X$. 
We can take a normal compactification $\overline{X'}$ of $X'$ over $S'$ 
with a morphism $\bar{g}:\overline{X'}\to\overline{X}$ extending $g$. 
Then, 
for a geometric point $x'\to\overline{X'}$, 
a geometric point $y'$ of $\overline{X'}_{(x')}\times_{\overline{X'}}X'$, 
and a pro-$p_{x'}$ element $\sigma'$ of $\pi_1(\overline{X'}_{(x')}\times_{\overline{X'}}X',y')$, 
we have $\Gamma^{\sigma'}([(g^*\mathcal{F})_{y'}])=\Gamma^{\bar{g}_*(\sigma')}([\mathcal{F}_{\bar{g}(y')}])$ 
and the same equality for $\mathcal{F}'$, 
where $\bar{g}_*$ is the induced homomorphism 
$\pi_1(\overline{X'}_{(x')}\times_{\overline{X'}}X',y')\to\pi_1(\overline{X}_{(\bar{g}(x'))}\times_{\overline{X}}X,\bar{g}(y'))$. 
Here since $\bar{g}_*(\sigma')$ is a pro-$p_{\bar{g}(x')}$ element of 
$\pi_1(\overline{X}_{(\bar{g}(x'))}\times_{\overline{X}}X,\bar{g}(y'))$, 
we have $\Gamma^{\sigma'}([\mathcal{F}_{y'}])=\Gamma^{\sigma'}([\mathcal{F}'_{y'}])$. 
\end{proof}

\subsection{Same conductors}
For a trait $T$ 
with generic geometric point $\eta$ and closed point $t$ such that the residue field of $t$ is algebraically closed  
and for a constructible complex $\mathcal{F}$ of $\Lambda$-modules on $T$, 
the Artin conductor $a(\mathcal{F})$ is defined by 
$a(\mathcal{F})=\rk(\mathcal{F}_{\eta})-\rk(\mathcal{F}_{t})+\Sw(\mathcal{F}_{\eta})$.  

For a regular scheme $X$ of dimension 1 whose closed points have perfect residue fields, 
a constructible complex $\mathcal{F}$ of $\Lambda$-modules on $X$, 
and a geometric point $x$ over a closed point of $X$, 
the Artin conductor $a_x(\mathcal{F})$ at $x$ is defined by $a_x(\mathcal{F})=a(\mathcal{F}|_{X_{(x)}})$. 

For a geometric point $x$ over a closed point of $X$, 
we denote the generic point of $X_{(x)}$ by $\eta_x$ 
and a geometric point over $\eta_x$ by $\bar{\eta_x}$. 
For a constructible complex $\mathcal{G}$ on a dense open subscheme $U$ of $X$ 
the Swan conductor $\Sw_x(\mathcal{G})$ at $x$ is defined to be 
the Swan conductor of the virtual representation $[\mathcal{G}_{\bar{\eta_x}}]$ 
of the absolute Galois group $\Gal(\bar{\eta_x}/\eta_x)$. 

Let $S$ be an excellent noetherian scheme. 
By Zariski's main theorem, for a regular $S$-curve $C$, 
there exists a dense open immersion $j:C\to\overline{C}$ over $S$ 
to a regular scheme $\overline{C}$ proper over $S$. 
We call such an open immersion a canonical regular compactification of $C$ over $S$. 
It is unique up to unique isomorphism. 

We assume that every closed point of $S$ has perfect residue field. 
\begin{Def}
\label{sc}
Let 
$X$ be an $S$-scheme separated of finite type 
and $\mathcal{F}$ and $\mathcal{F}'$ constructible complexes of $\Lambda$-modules and $\Lambda'$-modules respectively on $X$. 
We say $\mathcal{F}$ and $\mathcal{F}'$ have {\it universally the same conductors} over $S$ 
if for every regular $S$-curve $C$, 
every $S$-morphism $g:C\to X$, 
every canonical regular compactification $j:C\to\overline{C}$ of $C$ over $S$
and every geometric point $v\to \overline{C}$ over a closed point, 
we have $a_v(j_!g^*\mathcal{F})=a_v(j_!g^*\mathcal{F}')$. 
\end{Def}

\begin{Lem}
\label{char of sc}
$\mathcal{F}$ and $\mathcal{F}'$ have universally the same conductors over $S$ 
if and only if the following two conditions hold; 
\begin{enumerate}[{\rm (i)}]
\item
\label{rk}
for every geometric point $x$ of $X$, 
we have $\rk(\mathcal{F}_x)=\rk(\mathcal{F}'_x)$, 

\item
\label{swc}
for every regular $S$-curve $C$, 
every $S$-morphism $g:C\to X$, 
every canonical compactification $C\to\overline{C}$, 
and every geometric point $v\to \overline{C}$ over a closed point, 
we have $\Sw_v(g^*\mathcal{F})=\Sw_v(g^*\mathcal{F}')$. 
\end{enumerate}
\end{Lem}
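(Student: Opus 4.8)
The plan is to express the Artin conductor $a_v(j_!g^*\mathcal{F})$ by means of the formula for the conductor on a trait and then read off both conditions from the result. Fix a regular $S$-curve $C$, an $S$-morphism $g\colon C\to X$, the canonical regular compactification $j\colon C\to\overline{C}$, and a geometric point $v$ over a closed point of $\overline{C}$; write $\eta_v$ for the generic point of $\overline{C}_{(v)}$ and $\bar{\eta_v}$ for a geometric point over it. Since $\eta_v$ maps to a generic point of $C$, the defining formula $a(\mathcal{G})=\rk(\mathcal{G}_{\eta})-\rk(\mathcal{G}_{t})+\Sw(\mathcal{G}_{\eta})$ for a complex $\mathcal{G}$ on the trait $\overline{C}_{(v)}$ gives
$$a_v(j_!g^*\mathcal{F})=\rk((g^*\mathcal{F})_{\bar{\eta_v}})-\rk((j_!g^*\mathcal{F})_v)+\Sw_v(g^*\mathcal{F}),$$
where the middle term equals $\rk((g^*\mathcal{F})_v)$ when $v$ lies over a point of $C$ and equals $0$ otherwise. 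The implication from (i) and (ii) to universally the same conductors is then immediate: condition (i), pulled back along $g$, equates the two rank terms for $\mathcal{F}$ and $\mathcal{F}'$, while (ii) equates the Swan terms.

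For the converse I would argue in two steps. \emph{Step 1: condition (i).} We may restrict attention to connected $C$. For a closed point $c$ of $C$, compare the datum $(C,g)$ with $(C\setminus\{c\},g|_{C\setminus\{c\}})$: the canonical regular compactification of $C\setminus\{c\}$ is again $\overline{C}$ (connectedness of $C$ ensures $C\setminus\{c\}$ is still dense), now with $c$ in the boundary, and the Swan conductor at $v$ is unchanged, being computed from $\bar{\eta_v}$ alone; hence the displayed formula yields
$$a_v(j'_!g^*\mathcal{F})-a_v(j_!g^*\mathcal{F})=\rk(\mathcal{F}_{g(c)}),$$
with $j'$ the compactification of $C\setminus\{c\}$. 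Since both Artin conductors on the left are preserved when $\mathcal{F}$ is replaced by $\mathcal{F}'$, this gives $\rk(\mathcal{F}_{g(c)})=\rk(\mathcal{F}'_{g(c)})$ for every closed point $c$ of every connected regular $S$-curve mapping to $X$. To deduce (i) in general, fix a finite stratification of $X$ into connected locally closed subschemes on each of which $\mathcal{F}$ and $\mathcal{F}'$ are smooth, so that the ranks of $\mathcal{F}$ and $\mathcal{F}'$ are constant on each stratum; it then suffices to realize one point of each stratum in the form $g(c)$. A point in a positive-dimensional stratum is handled by the normalization (finite, since $S$ is excellent) of an integral curve through one of the stratum's closed points; a point $x_0$ of a zero-dimensional stratum is necessarily closed in $X$, and is handled either by an integral curve of $X$ through $x_0$ when $\dim_{x_0}X\geq1$, or, when $x_0$ is isolated in $X$, by a regular $S$-curve over the residue field of $x_0$—which is of finite type over $S$ because $\{x_0\}$ is open in $X$, so that Nagata's compactification theorem applies.

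\emph{Step 2: condition (ii).} Now fix $C$, $g$, $\overline{C}$, $v$ as in (ii). In the displayed formula the term $\rk((g^*\mathcal{F})_v)$, when $v$ lies over a point of $C$, agrees for $\mathcal{F}$ and $\mathcal{F}'$ by Step 1; and so does $\rk((g^*\mathcal{F})_{\bar{\eta_v}})$, because $g$ carries $\eta_v$ to the generic point of a connected component $C_0$ of $C$, and that rank equals the rank of $\mathcal{F}$ at any closed point of $C_0$ lying in the (dense) common smooth locus of $g^*\mathcal{F}$ and $g^*\mathcal{F}'$, to which Step 1 applies. Therefore the hypothesis $a_v(j_!g^*\mathcal{F})=a_v(j_!g^*\mathcal{F}')$ forces $\Sw_v(g^*\mathcal{F})=\Sw_v(g^*\mathcal{F}')$, which is (ii).

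I expect Step 1 to be the main obstacle. The Artin conductor only ever exposes the combination $\rk_{\eta}-\rk_{t}+\Sw_{\eta}$, so to isolate the rank one must manufacture a second curve datum—here, deleting a point—on which the Swan conductor is unchanged while the special-fibre rank drops to zero. After that, the geometric bookkeeping needed to realize every point of $X$ as a closed point of a connected regular $S$-curve mapping to $X$ is routine, with only the isolated-closed-point case over a possibly non-Jacobson base $S$ requiring a little care.
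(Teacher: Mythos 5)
Your sufficiency direction and Step 2 are fine and agree with the paper, but Step 1 has a genuine gap, and it is not the ``little care'' you anticipate for isolated points. The point-deletion trick can only compute $\rk(\mathcal{F}_{g(c)})$ for $c$ a closed point of a \emph{one-dimensional} connected regular $S$-curve $C$ (for zero-dimensional $C$ one has $C\setminus\{c\}=\emptyset$ and the comparison degenerates). Now a one-dimensional connected regular $S$-curve has an integral regular compactification $\overline{C}$, its closed points are closed points of $\overline{C}$, and a proper (indeed any closed) morphism sends closed points to closed points; hence every such $c$ lies over a \emph{closed} point of $S$. Consequently, when $S$ has non-closed points --- the case $S$ a trait is exactly where this lemma is applied in Section 5, via Lemma \ref{red2} --- no stratum contained in the generic fiber $X_\eta$ is reachable by your argument. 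Your claim that a point of a zero-dimensional stratum is necessarily closed in $X$ fails there (the generic point $\eta$ of a trait is open, hence locally closed but not closed), and your fallbacks do not repair this: a ``regular $S$-curve over $\kappa(x_0)$'' with $\kappa(x_0)\supseteq K=\mathrm{Frac}(\mathcal{O}_S)$ is necessarily zero-dimensional (a one-dimensional $K$-scheme of finite type such as $\mathbb{A}^1_K$ is an $S$-\emph{surface}, not an $S$-curve: its regular compactification over $S$ acquires a special fiber and has dimension $2$), and likewise the normalization of an integral curve through a closed point of a positive-dimensional stratum of $X_\eta$ is a $K$-curve, not an $S$-curve.

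The paper isolates the rank by a different device that avoids this entirely: it passes to a connected finite \'etale cover $W\to X_j$ of the stratum trivializing both complexes. Then for \emph{any} non-proper regular $S$-curve $C\to W$ and \emph{any} boundary point $v$ of $\overline{C}$, the pullback has constant cohomology sheaves, so $\Sw_v(g^*(\mathcal{F}|_W))=0$ and the stalk of $j_!$ at $v$ vanishes, whence $a_v(j_!g^*(\mathcal{F}|_W))=\rk(\mathcal{F}_x)$ on the nose; the point $x$ need not be a closed point of the curve, only lie in the same connected stratum as the image of $C$. For a stratum $\Spec L$ with $L/K$ finite one takes $C=\Spec L'$ for $L'/L$ splitting the cover, with $\overline{C}$ the normalization of $\mathcal{O}_S$ in $L'$. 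You should replace Step 1 by this argument; your deletion trick remains a correct and somewhat more elementary alternative when $S$ is the spectrum of a field, or more generally for strata meeting the closed fibers of $S$.
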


\begin{proof}
The sufficiency is clear. 
We prove the necessity. 
Assume $\mathcal{F}$ and $\mathcal{F}'$ have universally the same conductors over $S$. 
Let $x$ be a geometric point of $X$. 
Take a stratification $X=\coprod_i X_i$ 
	such that $\mathcal{F}|_{X_i}$ and $\mathcal{F}'|_{X_i}$ are smooth for every $i$ 
	and a covering $W\to X_j$ trivializing $\mathcal{F}|_{X_j}$ and $\mathcal{F}'|_{X_j}$, 
		where $X_j$ is the subset over which $x$ lies. 
Then, for every regular $S$-curve $C$ which is not proper and an $S$-morphism $g:C\to W$ 
and every geometric point $v$ of $\overline{C}\setminus C$, 
where $j:C\to\overline{C}$ is a canonical compactification over $S$, 
we have 
$a_v(j_!g^*(\mathcal{F}|_W))=\rk(\mathcal{F}_x)$, and similarly for $\mathcal{F}'$. 
Thus, by the assumption we get $\rk(\mathcal{F}_x)=\rk(\mathcal{F}'_x)$. 
By the definition of the Artin conductor 
	we have $\Sw_v(g^*\mathcal{F})=a_v(j_!g^*\mathcal{F})-\rk(\mathcal{F}_{g(v)})$ 
	and (ii) also follows. 
\end{proof}



\section{The main theorem}
Let $S$ be an excellent noetherian scheme and 
assume that all closed points of $S$ have perfect residue fields. 
\subsection{Statement of the main theorem}
It is easy to show that 
having the same virtual wild ramification implies having universally the same conductors: 
\begin{Prop}
\label{W to A}
Let $X$ be a scheme separated of finite type over $S$ 
and $\mathcal{F}$ and $\mathcal{F}'$ constructible complexes of 
$\Lambda$-modules and $\Lambda'$-modules respectively on $X$. 
Assume $\mathcal{F}$ and $\mathcal{F}'$ have the same virtual wild ramification over $S$. 
Then they have universally the same conductors over $S$. 
\end{Prop}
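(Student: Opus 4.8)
The plan is to reduce to the smooth case by dévissage and then to compare the Swan conductors along a curve with the $\Gamma^\sigma$-invariants appearing in the definition of same virtual wild ramification. First I would use Lemma \ref{char of sc}: it suffices to check that for every geometric point $x$ of $X$ one has $\rk(\mathcal{F}_x)=\rk(\mathcal{F}'_x)$, and that for every regular $S$-curve $C$, every $S$-morphism $g\colon C\to X$, every canonical compactification $j\colon C\to\overline{C}$, and every geometric point $v\to\overline{C}$ over a closed point, one has $\Sw_v(g^*\mathcal{F})=\Sw_v(g^*\mathcal{F}')$. The rank equality is immediate: pick a stratification $X=\coprod_i X_i$ into connected normal locally closed subschemes along which both $\mathcal{F}$ and $\mathcal{F}'$ are smooth and which is compatible with the decomposition witnessing the same virtual wild ramification; taking $\sigma=1$ in the defining condition gives $\rk(\mathcal{F}_y)=\Gamma^1([\mathcal{F}_y])=\Gamma^1([\mathcal{F}'_y])=\rk(\mathcal{F}'_y)$ on each stratum, hence everywhere.

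For the Swan conductor equality, by Lemma \ref{pullback} the sheaves $g^*\mathcal{F}$ and $g^*\mathcal{F}'$ on $C$ have the same virtual wild ramification over $S$, so I may replace $X$ by $C$ and assume $X=C$ is a regular $S$-curve with canonical compactification $j\colon C\to\overline{C}$. By further dévissage along a stratification of $C$ I reduce to the case where $C$ is connected and $\mathcal{F},\mathcal{F}'$ are smooth; shrinking $C$ does not change $\Sw_v$ for $v$ outside the shrunk-away points, and for the finitely many points removed the contribution is again controlled by the smooth pieces of lower-dimensional strata, so this reduction is harmless. Now $\Sw_v(g^*\mathcal{F})$ is by definition the Swan conductor of the virtual representation $[\mathcal{F}_{\bar\eta_v}]$ of $G_{\eta_v}=\Gal(\bar\eta_v/\eta_v)$, where $\eta_v$ is the generic point of $\overline{C}_{(v)}$. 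Choose a finite Galois extension $L/\eta_v$ with group $G$, containing the maximal tame subextension and trivializing $\mathcal{F}$ and $\mathcal{F}'$, and let $P$ be its $p$-Sylow subgroup. Note $\overline{C}_{(v)}\times_{\overline C}C=\eta_v$ since $C$ is the complement of the closed points in $\overline C$, so $\pi_1(\overline{C}_{(v)}\times_{\overline C}C)=G_{\eta_v}$ and the hypothesis applies directly to pro-$p$ elements $\sigma\in G_{\eta_v}$, i.e.\ to elements $\sigma\in P$.

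It then remains to deduce $\Sw([\mathcal{F}_{\bar\eta_v}])=\Sw([\mathcal{F}'_{\bar\eta_v}])$ from $\Gamma^\sigma([\mathcal{F}_{\bar\eta_v}])=\Gamma^\sigma([\mathcal{F}'_{\bar\eta_v}])$ for all $\sigma\in P$. This is precisely the content of Lemma \ref{sw}: writing
$$
\Sw(M)=\frac{1}{|G|}\sum_{\sigma\in P}sw_G(\sigma)\cdot\frac{p\cdot\Gamma^\sigma(M)-\Gamma^{\sigma^p}(M)}{p-1},
$$
every term on the right side is determined by the values $\Gamma^\tau(M)$ for $\tau\in P$ (note $\sigma\in P$ implies $\sigma^p\in P$), so equality of all these $\Gamma$-values for $\mathcal{F}$ and $\mathcal{F}'$ forces equality of the Swan conductors. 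Hence $\Sw_v(g^*\mathcal{F})=\Sw_v(g^*\mathcal{F}')$, and Lemma \ref{char of sc} gives the conclusion. The only mildly delicate point is the dévissage bookkeeping—ensuring that the stratification witnessing same virtual wild ramification can be chosen compatibly with pulling back along $g$ and with the canonical compactification, and that the finitely many boundary points introduced in shrinking are handled by lower strata—but this is routine given Lemma \ref{pullback} and the fact that the normal locus of an excellent scheme is open; the real mechanism is entirely the identity of Lemma \ref{sw}.
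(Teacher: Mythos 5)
Your proof is correct and follows essentially the same route as the paper: reduce via Lemma \ref{pullback} and Lemma \ref{char of sc} to a smooth complex on a regular connected $S$-curve, identify the relevant fundamental group at a boundary point $v$ with the Galois group of the strictly henselian trait $\overline{C}_{(v)}$, and conclude with the formula of Lemma \ref{sw}. The only blemishes are harmless misstatements (a dense open $C\subset\overline{C}$ need not be the complement of all closed points—though at points of $C$ both Swan conductors vanish since the complexes are smooth there—and a finite extension cannot ``contain the maximal tame subextension''), which do not affect the argument.
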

\begin{proof}
By Lemma \ref{pullback} and Lemma \ref{char of sc}, 
we may assume that 
$X$ is a regular connected $S$-curve and $\mathcal{F}$ and $\mathcal{F}'$ are smooth 
and it suffices to show $\Sw_x(\mathcal{F})=\Sw_x(\mathcal{F}')$ 
for every geometric point $x$ of $\overline{X}\setminus X$, 
where $X\to\overline{X}$ is a canonical regular compactification of $X$ over $S$. 
We may further assume $S=\overline{X}$. 
Again by Proposition \ref{pullback}, we may assume 
$S$ is a strictly local trait and $X$ is the generic point of $S$. 
Then the assertion follows from Lemma \ref{sw}. 
\end{proof}

We now state our main theorem; 
Conjecture \ref{conj} holds for sheaves on an $S$-surface: 
\begin{Thm}
\label{A to W}
Let $X$ be an $S$-surface 
and $\mathcal{F}$ and $\mathcal{F}'$ constructible complexes of 
$\Lambda$-modules and $\Lambda'$-modules respectively on $X$. 
Then the followings are equivalent. 
\begin{enumerate}[{\rm (i)}]
\item
\label{thmswr}
$\mathcal{F}$ and $\mathcal{F}'$ have the same virtual wild ramification over $S$. 
\item
\label{thmsc}
$\mathcal{F}$ and $\mathcal{F}'$ have universally the same conductors over $S$. 
\end{enumerate}
\end{Thm}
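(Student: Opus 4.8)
The implication (\ref{thmswr})$\Rightarrow$(\ref{thmsc}) is Proposition \ref{W to A}, so the content is the converse. The plan is to assume $\mathcal{F}$ and $\mathcal{F}'$ have universally the same conductors over $S$ and produce a decomposition $X=\coprod_i X_i$ into locally closed connected normal subschemes, with $\mathcal{F}|_{X_i}$ and $\mathcal{F}'|_{X_i}$ smooth, and a normal compactification $\overline{X_i}$ along which the two restrictions have the same virtual wild ramification. First I would reduce to the smooth case: stratify $X$ so that $\mathcal{F}$ and $\mathcal{F}'$ are simultaneously smooth on each stratum; by Lemma \ref{char of sc} the ranks agree and the Swan conductors along every curve agree, and both hypotheses pass to locally closed subschemes and to their normalizations, so we may assume $X$ is a regular connected $S$-surface with a chosen regular (or at least normal) compactification $\overline{X}$, and $\mathcal{F}$, $\mathcal{F}'$ smooth on $X$; this is where the dimension $\leq 2$ hypothesis enters, since a regular compactification of a regular surface exists. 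After shrinking we may also assume $S=\overline{X}$, i.e.\ $X$ is an open subscheme of the regular proper $S$-scheme $\overline{X}$ with $D=\overline{X}\setminus X$ a divisor.

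The heart of the argument is local: fix a geometric point $x\to\overline{X}$ over a closed point and a pro-$p_x$ element $\sigma$ of $\pi_1(\overline{X}_{(x)}\times_{\overline{X}}X,y)$, and we must show $\Gamma^\sigma([\mathcal{F}_y])=\Gamma^\sigma([\mathcal{F}'_y])$. The standard device is to realize $\Gamma^\sigma$ on the two-dimensional local ring $\mathcal{O}_{\overline{X}_{(x)}}$ by restricting to a suitable curve through $x$. Concretely, I would pick a regular curve $C$ on $\overline{X}_{(x)}$ (a "good" hyperplane section / parameter) meeting $D$ only at $x$, transversally, such that the induced map $\pi_1$ of the punctured curve surjects onto (or at least hits) $\overline{\langle\sigma\rangle}$, so that $\Gamma^\sigma$ computed on $\overline{X}_{(x)}$ equals the corresponding fixed-dimension computed on $C$. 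One then has on the trait $C_{(x)}$ a cyclic-of-$p$-power quotient through which $\mathcal{F}|_C$, $\mathcal{F}'|_C$ factor after further localizing; the hypothesis that $\mathcal{F}$, $\mathcal{F}'$ have universally the same conductors gives, for every such curve and every subextension, equality of Swan conductors, and equality of ranks; then Lemma \ref{if the same sw then the same fixed parts} yields $\Gamma^\sigma([\mathcal{F}|_{C,y}])=\Gamma^\sigma([\mathcal{F}'|_{C,y}])$, hence the desired equality upstairs. The key point making this work is that $\Gamma^\sigma$ depends only on the restriction to the cyclic subgroup $\overline{\langle\sigma\rangle}$, which is pro-cyclic pro-$p$, so a single well-chosen curve detects it.

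The main obstacle is the existence and control of such a curve: one needs, for a given pro-$p$ element $\sigma$ of the fundamental group of the punctured strict localization of a regular surface, a regular curve through $x$ whose punctured fundamental group maps onto $\overline{\langle\sigma\rangle}$ (or more precisely, realizes the same $\Gamma^\sigma$). This is a Bertini-type statement over a (possibly imperfect-residue-field, possibly mixed-characteristic) two-dimensional local ring, and making it uniform in $\sigma$ — or reducing to finitely many $\sigma$ up to the relevant equivalence — is the delicate part; I expect to use a limit argument to pass from $\overline{X}_{(x)}$ to a suitable affine étale neighborhood, then a genericity argument for the hyperplane section, controlling ramification along $D$ via the regularity of $\overline{X}$ and the divisor structure. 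Once the curve is in hand, the remaining steps (reduction to a strictly local trait, application of Lemmas \ref{sw} and \ref{if the same sw then the same fixed parts}, and reassembling over all geometric points via the specialization-morphism remark after the definition of same virtual wild ramification) are formal.
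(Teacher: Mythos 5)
Your overall strategy coincides with the paper's: the implication (i)$\Rightarrow$(ii) is Proposition \ref{W to A}; for the converse one reduces by devissage to $X$ regular connected with $\mathcal{F},\mathcal{F}'$ smooth, invokes Lipman's resolution to get a regular compactification (this is indeed where $\dim\leq2$ enters), and then, for each geometric point $x$ of the boundary and each pro-$p_x$ element $\sigma$, one finds a curve through $x$ that detects the ramification of $\sigma$ and concludes by Lemma \ref{if the same sw then the same fixed parts} applied to the restriction to that curve and to its subcoverings (all of which are again $S$-curves, so the hypothesis supplies the needed equalities of Swan conductors). This is exactly the paper's Lemma \ref{curve case} combined with Lemma \ref{p-power cov}.

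The genuine gap is the existence of the ``good curve'', which you correctly identify as the delicate point but then dispose of with an unspecified ``Bertini-type'' and ``genericity'' appeal. This is precisely the nontrivial technical content of the theorem, and a generic hyperplane-section argument over a two-dimensional excellent local ring (with possibly finite or imperfect residue field, possibly mixed characteristic) does not obviously guarantee that the restricted covering stays \emph{ramified} with full inertia. The paper fills this in with two reductions you are missing. First, it does not attempt to detect the pro-cyclic closure $\overline{\langle\sigma\rangle}$ of an arbitrary pro-$p$ element: it fixes a finite $G$-torsor $W\to X$ trivializing both complexes, takes the maximal cyclic subgroups $H_i\subset G$ of prime-power order, and reduces via Lemmas \ref{cptf} and \ref{red to the cyclic case} to the case of a $\mathbb{Z}/p^e\mathbb{Z}$-torsor, so that only surjectivity onto the \emph{finite} cyclic inertia group $I_x$ is needed. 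Second, surjectivity onto a cyclic $p$-group follows from nontriviality of the composite with its $\mathbb{F}_p$-quotient, i.e.\ from the induced $\mathbb{F}_p$-torsor being ramified along the curve; the existence of a one-dimensional closed subscheme along which a given $\mathbb{F}_p$-torsor remains ramified is exactly Kerz--Schmidt's lemma (quoted as Lemma \ref{KS}), applied after Zariski--Nagata purity locates a codimension-one ramification point, and this is what Lemma \ref{taking a good curve} carries out. Without the finite-level and $\mathbb{F}_p$-level reductions and the Kerz--Schmidt input, the curve your argument hinges on is not constructed, so the proof is incomplete at its central step; the remaining steps of your outline are sound and match the paper.
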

We show that, 
under the existence of a regular compactification, 
Conjecture \ref{conj} holds for smooth complexes on a regular $S$-scheme: 
\begin{Prop}
\label{p-power case}
Let $X$ be a regular connected scheme separated of finite type over $S$ 
and $\mathcal{F}$ and $\mathcal{F}'$ smooth complexes of 
$\Lambda$-modules and $\Lambda'$-modules respectively on $X$. 
Take a finite group $G$ and a $G$-torsor $W\to X$ 
which trivializes $\mathcal{F}$ and $\mathcal{F}'$. 
We assume that $\mathcal{F}$ and $\mathcal{F}'$ have universally the same conductors over $S$ 
and that 
for every subgroup $H$ of $G$ which is maximal among cyclic subgroups	of prime-power order, 
the quotient $W/H$ has a regular compactification over $S$. 
Then 
$\mathcal{F}$ and $\mathcal{F}'$ have the same virtual wild ramification over $S$. 
\end{Prop}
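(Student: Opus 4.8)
The plan is to reduce the statement to a comparison of fixed‑point dimensions of the virtual representations $M\in K(\Lambda[G])$ and $M'\in K(\Lambda'[G])$ of $G$ corresponding to $\mathcal{F}$ and $\mathcal{F}'$ via $W\to X$, and then to prove that comparison by applying Lemma~\ref{if the same sw then the same fixed parts} to cyclic $p$‑subgroups of $G$, feeding it with Swan conductors that the hypothesis supplies through restriction to curves. We may assume $W$ is connected, so that $\pi_1(X)\to G$ is surjective.

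First I would reduce. As $X$ is regular it is normal, hence admits a normal compactification $\overline{X}$ over $S$ (Nagata, then normalization); by the remark after the first definition it suffices to verify the condition there at geometric points over closed points. For a geometric point $x$ over a closed point of $\overline{X}$, a geometric point $y$ of $\overline{X}_{(x)}\times_{\overline{X}}X$, and a pro-$p_x$ element $\sigma$ of $\pi_1(\overline{X}_{(x)}\times_{\overline{X}}X,y)$, the stalk $[\mathcal{F}_y]$ is the pullback of $M$ along $\pi_1(\overline{X}_{(x)}\times_{\overline{X}}X,y)\to\pi_1(X,y)\to G$, so $\Gamma^{\sigma}([\mathcal{F}_y])=\Gamma^{\tau}(M)$, where $\tau\in G$ is the image of $\sigma$, an element of $p_x$‑power order, and the same holds for $\mathcal{F}'$. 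Hence it is enough to prove $\Gamma^{\tau}(M)=\Gamma^{\tau}(M')$ for every $\tau$ of prime-power order arising this way; we also record $\rk(M)=\rk(M')$, from Lemma~\ref{char of sc}~(i). Since $\overline{X}_{(x)}$ has trivial fundamental group, purity of the branch locus shows that any such $\tau$ is, up to conjugacy, contained in the inertia subgroup of a boundary divisor of a normal compactification of $X$.

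Now fix such a $\tau$, of order $p^k$ say, and pick a subgroup $H$ of $G$ with $\langle\tau\rangle\subseteq H$ that is maximal among cyclic subgroups of prime-power order; then $H$ is cyclic of $p$‑power order, and by hypothesis $Y:=W/H$ has a regular compactification $\overline{Y}$ over $S$. The map $f\colon Y\to X$ is finite \'etale, $W\to Y$ is an $H$‑torsor, $f^{*}\mathcal{F}$ and $f^{*}\mathcal{F}'$ correspond to $N:=M|_H$ and $N':=M'|_H$, and they again have universally the same conductors over $S$ (compose a regular $S$‑curve over $Y$ with $f$). Choosing $\overline{X}$ compatibly with $\overline{Y}$, one sees that $\langle\tau\rangle$ lies in the inertia subgroup $I_{\xi}\subseteq H$ of a codimension‑one point $\xi$ of $\overline{Y}\setminus Y$; as $H$ is cyclic, $I_{\xi}$ is a cyclic $p$‑group. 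Using that $\overline{Y}$ is regular, I would take a general closed point $z$ of the divisor $\overline{\{\xi\}}$ (at which $\overline{Y}$ and $\overline{\{\xi\}}$ are regular and no other component of $\overline{Y}\setminus Y$ passes) and a regular $S$‑curve $C$ through $z$ transversal to $\overline{\{\xi\}}$ there. Then $W\times_Y C\to C$ has inertia $I_{\xi}$ at the boundary point $z$ of the canonical regular compactification of $C$, and $\Sw_z$ of the restriction of $\mathcal{F}$ equals the Swan conductor of the $I_{\xi}$‑representation $N|_{I_{\xi}}$; base‑changing $C$ by the intermediate coverings $W/H'\to Y$ for subgroups $H'$ of $H$ — again regular $S$‑curves, mapping to $W/H'$ and hence to $X$ — yields in the same way the Swan conductors of all subextensions of the cyclic extension of the local field at $z$ cut out by $W$. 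By ``universally the same conductors'' these agree for $\mathcal{F}$ and $\mathcal{F}'$, so Lemma~\ref{if the same sw then the same fixed parts}, applied to this cyclic extension of $p$‑power degree and using $\rk(M)=\rk(M')$, gives $\Gamma^{\rho}(N|_{I_{\xi}})=\Gamma^{\rho}(N'|_{I_{\xi}})$ for every $\rho\in I_{\xi}$; taking $\rho=\tau$ and using $\langle\tau\rangle\subseteq I_{\xi}\subseteq H$ gives $\Gamma^{\tau}(M)=\Gamma^{\tau}(M')$, which finishes the proof.

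The main obstacle is the comparison of Swan conductors under restriction to a general transversal curve on the regular model $\overline{Y}$: one needs that the Swan conductor of a sheaf along a boundary divisor — and the inertia along it — are computed by restriction to a sufficiently general transversal curve, a form of the statement that wild ramification is generically constant along a divisor on a regular scheme, and this is required for every subextension of the local cyclic covering. A secondary point is to verify carefully that every $\tau$ that must be controlled really occurs in the inertia of a boundary divisor of $\overline{Y}$: this uses purity of the branch locus and the fact that over the cyclic group $H$ the subgroup generated by the inertia subgroups through a point is the largest of them; one must also keep in mind that $W/H'$ for a proper subgroup $H'$ of $H$ need not have a regular compactification, so its subextension Swan conductors have to be obtained from curves mapping to it as a cover of $X$ rather than from a regular model.
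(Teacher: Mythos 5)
Your overall architecture is the same as the paper's: reduce to the maximal cyclic subgroups $H$ of prime-power order, pass to $Y=W/H$ and its regular compactification, produce a curve on which the local inertia survives, and feed the resulting tower of Swan conductors into Lemma \ref{if the same sw then the same fixed parts} (this is exactly the chain Lemma \ref{red to the cyclic case} $\to$ Lemma \ref{p-power cov} $\to$ Lemma \ref{taking a good curve} $\to$ Lemma \ref{curve case} in the paper). Your reductions are sound: the compatibility of compactifications is Lemma \ref{cptf}, and your observation that for cyclic $H$ the punctual inertia $I_v$ equals the largest divisorial inertia through $v$ (by purity and simple-connectedness of the regular strict localization) is correct.

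However, the step you yourself flag as ``the main obstacle'' is a genuine gap, and it is the entire technical content of this part of the paper. You assert that a ``sufficiently general'' closed point $z$ of $\overline{\{\xi\}}$ and a transversal regular curve $C$ through $z$ will have inertia $I_\xi$ for $W\times_YC\to C$ at $z$. Nothing in your argument produces even one such curve: restriction to a curve can strictly shrink wild inertia, and the claim that this does not happen generically along the divisor is not a formal consequence of anything you have set up. The paper supplies exactly this missing existence statement via the Kerz--Schmidt lemma (Lemma \ref{KS}): for an $\mathbb{F}_p$-torsor ramified along a divisor on a normal excellent local scheme, there is a one-dimensional closed subscheme whose normalization still sees the ramification. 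Note also that the paper only needs this for the $\mathbb{F}_p$-quotient: since $I_x$ is cyclic of $p$-power order, surjectivity of $\pi_1(\overline{C}_{(v)}\times_{\overline{C}}C)\to I_x$ can be checked modulo the Frattini (index-$p$) subgroup, so one replaces $W$ by $W/H'$ with $H'$ of index $p$ in $I_x$ and applies Lemma \ref{KS} there; your version, which asks the transversal curve to capture the full cyclic inertia $I_\xi$ in one stroke, is a strictly stronger statement than what the cited lemma gives. To close your proof you must either invoke \cite[Lemma 2.4]{KS} (as in Lemma \ref{taking a good curve}) together with this Frattini reduction, or prove the generic-transversal-curve claim directly, which is not easier.
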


\subsection{Curve case}
First, we show Proposition \ref{p-power case} holds if $\dim\overline{X}\leq1$ and if $G$ is of prime-power order: 
\begin{Lem}
\label{curve case}
Let $X$ be a regular $S$-curve 
and $\mathcal{F}$ and $\mathcal{F}'$ smooth complexes 
of $\Lambda$-modules and $\Lambda'$-modules respectively on $X$. 
Assume that 
$\mathcal{F}$ and $\mathcal{F}'$ 
are trivialized by a $\mathbb{Z}/p^e\mathbb{Z}$-torsor over $X$ 
for a prime number $p$ and an integer $e\geq0$ 
and have universally the same conductors over $S$. 
Then they have the same virtual wild ramification over $S$. 
\end{Lem}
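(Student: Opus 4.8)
The plan is to reduce to the strictly-local situation already handled by the Swan-conductor machinery of Section \ref{swbr}, and then to invoke Lemma \ref{if the same sw then the same fixed parts}. First I would unwind the definition of \emph{same virtual wild ramification over $S$} for smooth complexes on a regular $S$-curve: by Definition \ref{svwr}(\ref{sm case}) it suffices to exhibit one normal compactification $\overline{X}$ of $X$ over $S$ along which the condition holds, and for a curve we may take the canonical regular compactification $j\colon X\to\overline{X}$ from Zariski's main theorem. By the remark following Definition \ref{svwr} it is enough to verify, at a geometric point $x$ over each closed point of $\overline{X}$, that $\Gamma^\sigma([\mathcal{F}_y])=\Gamma^\sigma([\mathcal{F}'_y])$ for every pro-$p_x$ element $\sigma$ of $\pi_1(\overline{X}_{(x)}\times_{\overline{X}}X,y)$. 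If $x$ lies in $X$ there is nothing wild to check; so fix $x$ over a point of $\overline{X}\setminus X$. Then $\overline{X}_{(x)}$ is a strictly local trait $T$ with generic point $\eta_x$, and $\overline{X}_{(x)}\times_{\overline{X}}X=\eta_x$, so the relevant fundamental group is the absolute Galois group $G_{\eta_x}$ of the strictly henselian discrete valuation field at $x$.

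Next I would use the hypothesis that $\mathcal{F},\mathcal{F}'$ are trivialized by a $\mathbb{Z}/p^e\mathbb{Z}$-torsor $W\to X$. Restricting this torsor to $\eta_x$, its image gives a cyclic quotient $G_{\eta_x}\twoheadrightarrow G$ with $G$ cyclic of $p$-power order dividing $p^e$; equivalently a (possibly trivial) cyclic totally wildly ramified extension $L/\eta_x$ of degree $p^{e'}$, $e'\le e$, through which both $[\mathcal{F}_{\bar\eta_x}]$ and $[\mathcal{F}'_{\bar\eta_x}]$ factor as virtual representations $M\in K(\Lambda[G])$, $M'\in K(\Lambda'[G])$. (If $L/\eta_x$ has tame part this only shrinks $G$; the pro-$p_x$ elements $\sigma$ all land in the $p$-Sylow, which equals $G$ here.) It then suffices to prove $\Gamma^\sigma(M)=\Gamma^\sigma(M')$ for all $\sigma\in G$. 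For this I invoke Lemma \ref{if the same sw then the same fixed parts}: I need $\rk(M)=\rk(M')$ and $\Sw(M|_H)=\Sw(M'|_H)$ for every subgroup $H\le G$. The rank equality is immediate from condition (\ref{rk}) of Lemma \ref{char of sc}, which holds since $\mathcal{F},\mathcal{F}'$ have universally the same conductors. For the Swan conductors: a subgroup $H\le G$ corresponds to an intermediate field $K'$ of $L/\eta_x$, and $\Sw(M|_H)$ is the Swan conductor of $[\mathcal{F}_{\bar\eta_x}]$ computed over $K'$.

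To realize each such $\Sw(M|_H)$ as a $\Sw_v$ of a pullback along some regular $S$-curve, I would do the following. The field $K'$ is a finite extension of $\eta_x$; spreading out, there is a regular $S$-curve $C$, a closed point $v$ of its canonical compactification $\overline{C}$, an $S$-morphism $g\colon C\to X$ factoring through $W$ (after possibly passing to a further étale neighbourhood, using that $W\to X$ is finite étale), such that $\overline{C}_{(v)}\to\overline{X}_{(x)}$ realizes the extension $K'/\eta_x$ on generic points. Concretely one can take $C$ to be an open of a suitable curve through the relevant point of $W$ lying over $x$, chosen so that its local field at the preimage of $x$ is $K'$; this is where I would work a bit, since I must produce an \emph{actual} $S$-curve, not just a local trait, but the strict localization $\overline{X}_{(x)}$ is a filtered limit of such curves, so a standard spreading-out argument applies. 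Then $\Sw_v(g^*\mathcal{F})=\Sw(M|_H)$ and $\Sw_v(g^*\mathcal{F}')=\Sw(M'|_H)$ by construction, and condition (\ref{swc}) of Lemma \ref{char of sc} gives $\Sw(M|_H)=\Sw(M'|_H)$. Feeding this and the rank equality into Lemma \ref{if the same sw then the same fixed parts} yields $\Gamma^\sigma(M)=\Gamma^\sigma(M')$ for all $\sigma$, hence the required equality of $\Gamma^\sigma([\mathcal{F}_y])$, and the lemma follows. The main obstacle I anticipate is precisely the spreading-out step: arranging, for \emph{every} intermediate field $K'$ simultaneously (or one at a time, which suffices), a genuine regular $S$-curve $C$ over $X$ whose canonical compactification has a point $v$ with the prescribed local extension $K'/\eta_x$ and with $C\to X$ lifting to $W$; everything else is a direct application of the preliminary lemmas.
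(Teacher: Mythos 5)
Your overall strategy coincides with the paper's: localize at a boundary point $x$ of the canonical regular compactification, identify $[\mathcal{F}_y]$ and $[\mathcal{F}'_y]$ with virtual representations $M,M'$ of a cyclic $p$-group (the inertia group $I_x$ of the torsor $W\to X$), verify the hypotheses of Lemma \ref{if the same sw then the same fixed parts}, and conclude. The rank equality via Lemma \ref{char of sc}(\ref{rk}) is also as in the paper. The gap is exactly where you flag it: producing, for each subgroup $H\le I_x$, a genuine regular $S$-curve $C$ with a morphism $g\colon C\to X$ such that $\Sw_v(g^*\mathcal{F})=\Sw(M|_H)$. Your proposed construction does not work as stated. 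If $g\colon C\to X$ lifts to $W$, then $g^*\mathcal{F}=\tilde g^*(\mathcal{F}|_W)$ is a constant complex, so $\Sw_v(g^*\mathcal{F})=0$ for every $v$; this recovers only the (vacuous) case $H=\{1\}$ and says nothing about $\Sw(M|_H)$ for nontrivial $H$. Likewise, a curve whose local field at $v$ is the intermediate field $K'=L^H$ cannot factor through $W$, since any such factorization would force the local field at $v$ to contain $L$ itself. Finally, no spreading-out from the strict localization is needed here: $X$ is already an $S$-curve, so there is nothing to cut down.

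The paper's resolution is simpler and you should adopt it: take $C=V:=W/H$. Since $W\to X$ is finite \'etale and $X$ is a regular $S$-curve, $V$ is again a regular $S$-curve with a canonical $S$-morphism $V\to X$, and its canonical regular compactification is the normalization $\overline{V}$ of $\overline{X}$ in $V$. If $w$ is a geometric point of $\overline{W}$ over $x$ and $v$ its image in $\overline{V}$, then $\overline{W}_{(w)}\times_{\overline{W}}W\to\overline{V}_{(v)}\times_{\overline{V}}V$ is an $H$-torsor, i.e.\ the local field of $\overline{V}$ at $v$ is exactly $K'=L^H$, so $\Sw_v(\mathcal{F}|_V)=\Sw(M|_H)$ and likewise for $\mathcal{F}'$. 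The hypothesis of universally the same conductors, applied to $V\to X$, then gives $\Sw(M|_H)=\Sw(M'|_H)$ for every $H$, and Lemma \ref{if the same sw then the same fixed parts} finishes the argument. With this substitution your proof closes; without it, the key input to that lemma is not established.
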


\begin{proof}
Let $W\to X$ be a $G=\mathbb{Z}/p^e\mathbb{Z}$-torsor trivializing $\mathcal{F}$ and $\mathcal{F}'$ 
and $X\to\overline{X}$ a canonical regular compactification of $X$ over $S$. 

Take a geometric point $x$ of $\overline{X}$. 
We show that for every pro-$p_x$ element 
$\sigma\in\pi_1(\overline{X}_{(x)}\times_{\overline{X}}X,y)$, 
we have $\Gamma^\sigma([\mathcal{F}_y])=\Gamma^\sigma([\mathcal{F}'_y])$, 
where $y$ is a geometric point. 
If $x$ lies over a point of $X$, this is trivial. 
We assume $x$ lies over a point of $\overline{X}\setminus X$. 

We note that 
$K=\Gamma(\overline{X}_{(x)}\times_{\overline{X}}X,\mathcal{O})$ 
is a strictly henselian discrete valuation field 
and that 
$\overline{W}_{(w)}\times_{\overline{W}}W\to\overline{X}_{(x)}\times_{\overline{X}}X$ 
is an $I_x$-torsor, 
where $\overline{W}$ is the normalization of $\overline{X}$ in $W$, 
$w$ is a geometric point of $\overline{W}$ which lies over $x$, 
and $I_x$ is the inertia group, i.e. the stabilizer of $w$. 
The elements $[\mathcal{F}_y]$ and $[\mathcal{F}'_y]$ 
of $K(\Lambda[\pi_1(\overline{X}_{(x)}\times_{\overline{X}}X,y)])$ come from 
elements $M$ and $M'$ of $K(\Lambda[I_x])$. 

By the assumption that 
$\mathcal{F}$ and $\mathcal{F}'$ have universally the same conductors, 
it follows that for every subgroup $H$ of $I_x$ we have 
$\Sw(M|_H)=\Sw(M'|_H)$. 
In fact, if we put $V=W/H$, 
then $\overline{W}_{(w)}\times_{\overline{W}}W\to\overline{V}_{(v)}\times_{\overline{V}}V$ 
is an $H$-torsor, 
where $\overline{V}$ is the normalization of $\overline{X}$ in $V$ 
and $v$ is the geometric point under $w$. 
Then the assertion follows from Lemma \ref{if the same sw then the same fixed parts}.
\end{proof}
\subsection{Existence of good curve}
The following lemma will be used to reduce the proof of Proposition \ref{p-power case} to the curve case.
Let $p$ be a prime number. 
\begin{Lem}
\label{taking a good curve}
Let 
$X$ be a dense open subscheme of 
a regular connected excellent noetherian scheme $\overline{X}$. 
Let $G$ be a cyclic group of $p$-power order 
and $W\to X$ a $G$-torsor. 
Then, for any geometric point $x$ over a closed point of $\overline{X}$, 
there exists 
	a regular scheme $\overline{C}$ of dimension one, 
	a finite morphism $\bar{g}:\overline{C}\to \overline{X}$, 
	and a geometric point $v$ of $\overline{C}$ over $x$ 
such that 
$C=X\times_{\overline{X}}\overline{C}$ is not empty 
and the composite 
$\pi_1(\overline{C}_{(v)}\times_{\overline{C}}C,y')
\to\pi_1(\overline{X}_{(x)}\times_{\overline{X}}X,y)\to I_x$ 
is surjective, 
where $I_x$ is an inertia subgroup of $G$ at $x$ 
and where $y'$ is a geometric point $\overline{C}_{(v)}\times_{\overline{C}}C$ 
and $y$ is its image. 
\end{Lem}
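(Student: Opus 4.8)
The plan is to reduce, by a group-theoretic argument using that $G$ is cyclic of $p$-power order, to the bare assertion that the pulled-back torsor remains ramified on the curve, and then to construct the curve, inside a strict localization of $\overline{X}$, as one tangent to a divisorial component of the branch locus to high order. I expect the wild part of the last step to be the main obstacle.

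First I would dispose of the trivial cases: if $\dim\overline{X}\le 1$ take $\overline{C}=\overline{X}$, and if $I_x=1$ take $\overline{C}$ to be the normalization of any integral curve through the closed point $\bar{x}$ below $x$ that meets $X$, which exists because $\overline{X}\setminus X$ is nowhere dense. So assume $\dim\overline{X}\ge 2$ and $I_x\cong\mathbb{Z}/p^e\mathbb{Z}$ with $e\ge 1$, and let $J\subsetneq I_x$ be its unique subgroup of index $p$; as $G$ is abelian, $J$ is normal in $G$. I claim it suffices to find $\overline{C},\bar{g},v$ as in the statement with $C$ nonempty and with $g^*(W/J)\to C$ \emph{ramified at $v$}, i.e.\ of nontrivial inertia at $v$. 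Indeed, for such data the image $I_v$ of $\pi_1(\overline{C}_{(v)}\times_{\overline{C}}C,y')$ in $G$ is contained in $I_x$ (it factors through the image $I_x$ of $\pi_1(\overline{X}_{(x)}\times_{\overline{X}}X,y)$ in $G$) and has nontrivial image in $G/J$, so $I_v\not\subseteq J$; since $J$ is the unique maximal proper subgroup of the cyclic $p$-group $I_x$, this forces $I_v=I_x$, which is the asserted surjectivity. Hence I may replace $(G,W)$ by $(G/J,W/J)$ and reduce to the situation where $G$ is a cyclic $p$-group, $W\to X$ is a connected $G$-torsor whose inertia $I_x$ at $x$ has order $p$, and where it suffices to find $\overline{C},\bar{g},v$ with $C\ne\emptyset$ and $g^*W\to C$ ramified at $v$.

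Next I would make the construction concrete. It is enough to produce a one-dimensional integral closed subscheme $Z\subseteq\overline{X}$ through $\bar{x}$ not contained in $\overline{X}\setminus X$: its normalization $\overline{C}$ is then automatically regular of dimension one, the map $\bar{g}\colon\overline{C}\to\overline{X}$ is finite (normalization of an excellent scheme is finite, and $Z\hookrightarrow\overline{X}$ is a closed immersion), $C=\overline{C}\times_{\overline{X}}X$ is a nonempty dense open of $\overline{C}$, and every point $v$ of $\overline{C}$ over $\bar{x}$ lies outside $C$, because $W\to X$ ramified at $x$ puts $\bar{x}$ in the branch locus of the normalization $\overline{W}\to\overline{X}$, which is contained in $\overline{X}\setminus X$. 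By Zariski--Nagata purity applied to $\overline{X}_{(x)}=\Spec A$ with $A$ strictly henselian regular local, the surjection $\pi_1(\overline{X}_{(x)}\times_{\overline{X}}X,y)\to I_x$ is topologically generated by the inertia groups at the codimension-one points of $A$ lying in $\overline{X}\setminus X$; since $I_x$ has order $p$ it has no nontrivial proper subgroup, so one such point, say $\eta_0$, already has inertia equal to $I_x$. Let $D\subseteq\overline{X}$ be the prime divisor which is the closure of the image of $\eta_0$; then $\bar{x}\in D\subseteq\overline{X}\setminus X$ and $W\to X$ is ramified along $D$, with positive generic conductor along $D$ --- positive generic Swan conductor $s$ if the ramification is wild ($p=p_x$), ramification index $p$ if it is tame ($p\ne p_x$).

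It then remains to choose $Z$ through $\bar{x}$ so that $g^*W\to C$ stays ramified at $v$, and this is the heart of the argument. The essential point is that $Z$ must be taken \emph{tangent to $D$ to sufficiently high order} at $\bar{x}$, and general otherwise --- transversality does not suffice in the wild case, where a defining function of the cover may vanish at $\bar{x}$, so that only a curve ``entering its pole'' detects the ramification. Concretely, after choosing a regular system of parameters of $\mathcal{O}_{\overline{X},\bar{x}}$ with local equation $t$ for $D$, parametrizing $Z$ near $\bar{x}$ by $t\mapsto\pi^a$ with the remaining parameters generic in $\pi$, and taking $a$ coprime to $p$ and large, one verifies by a local computation that the ramification of $W$ along $D$ restricts to nontrivial ramification of $g^*W$ at $v$: in the tame case the ramification index $p$ along $D$ restricts to ramification index $p>1$ at $v$ since $p\nmid a$; in the wild case the restricted class has at $v$ conductor $a\cdot s$ minus a quantity bounded independently of $a$, hence positive for $a$ large, with the genericity of the remaining parameters ensuring the restricted Artin--Schreier--Witt class is genuinely nontrivial. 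I expect the main obstacle to be precisely this wild-case local analysis --- especially when $D$ is singular at $\bar{x}$, or in unequal characteristic --- together with verifying that a curve germ with the prescribed high tangency to $D$ and the required genericity exists and algebraizes to a closed subscheme of $\overline{X}$.
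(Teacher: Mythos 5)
Your group-theoretic reduction is exactly the one in the paper's proof: there too one passes to the index-$p$ subgroup $H\subset I_x$ (your $J$), replaces $W$ by $W/H$, and observes that surjectivity onto $I_x/H\simeq\mathbb{F}_p$ forces surjectivity onto $I_x$ because the index-$p$ subgroup is the unique maximal subgroup of a cyclic $p$-group. Your identification of the remaining task --- produce a curve through $x$, meeting $X$, on which the resulting order-$p$ cover stays ramified --- is also correct, as is the use of Zariski--Nagata purity to locate a codimension-one point at which the cover is ramified with full inertia.

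The gap is that you never actually prove this remaining step; you sketch a ``high tangency to $D$'' construction and then list, accurately, the reasons it is incomplete: $D$ may be singular at $\bar{x}$, so it need not admit a local equation $t$ inside a regular system of parameters; when $\overline{X}$ has mixed characteristic and $p$ equals the residue characteristic, the cover is not given by an Artin--Schreier--Witt equation and the claimed estimate ``conductor $a\cdot s$ minus a bounded quantity'' has no justification; and the curve germ with prescribed tangency must still be algebraized to an integral closed subscheme finite over $\overline{X}$. These are precisely the difficulties the paper does not re-prove: it invokes Lemma 2.4 of Kerz--Schmidt (quoted as Lemma \ref{KS}), which asserts, for a normal excellent noetherian local ring, that an $\mathbb{F}_p$-torsor ramified along a codimension-one subscheme remains ramified after restriction to the normalization of a suitable one-dimensional closed subscheme. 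The paper also first replaces $\overline{X}$ by the normalization $\overline{V}$ of $\overline{X}$ in $V=W/I_x$, so that $W\to V$ is a genuine $\mathbb{F}_p$-torsor to which Lemma \ref{KS} applies (normality of $\overline{V}$ suffices there), and then pushes the resulting curve down along the finite map $\overline{V}\to\overline{X}$. So your outline is structurally right, but the heart of the lemma is exactly the statement you leave unverified, and the intended argument is a citation of \cite{KS} rather than the local computation you propose.
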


To prove this we use the following lemma: 
\begin{Lem}[Lemma 2.4 in \cite{KS}]
\label{KS}
Let $X=\Spec A$ be a spectrum of a normal excellent noetherian local ring $A$. 
Let $U$ be a dense open subscheme of $X$ 
and $V\to U$ an $\mathbb{F}_p$-torsor ramified along a closed subscheme $D\subset X$ of codimension one. 
Then, there exists a closed subscheme $C\subset X$ of dimension one 
such that the $\mathbb{F}_p$-torsor $V\times_X\widetilde{C}\to U\times_X\widetilde{C}$ is ramified along $D\times_X\widetilde{C}$, 
where $\widetilde{C}$ is the normalization of $C$. 
\end{Lem}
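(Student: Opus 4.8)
The plan is to localize the ramification of the torsor at the generic point of $D$, describe it by an explicit Artin--Schreier representative after completing at the closed point, produce a \emph{formal} curve along which the pole of that representative survives, and finally algebraize the formal curve by truncation. Since ``ramified along $D$'' concerns a single prime divisor, I may assume $D$ is irreducible; let $\xi$ be its generic point and $R:=\mathcal{O}_{X,\xi}$, a discrete valuation ring with $\mathrm{Frac}(R)=\mathrm{Frac}(A)=:K$, and let $\alpha\in H^1(U,\mathbb{F}_p)$ be the class of the torsor. That the torsor is ramified along $D$ means exactly that the image $\alpha_\xi$ of $\alpha$ in $H^1(\Spec K,\mathbb{F}_p)$ does not extend to $\Spec R$; when $\mathrm{char}\,\kappa(\xi)=p$ this says the Swan conductor $m:=\Sw(\alpha_\xi)$ is $\ge 1$, and I treat in detail the equal--characteristic case $\mathrm{char}\,A=p$ (the wild case), the remaining cases being deferred to the end. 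I also note that any one-dimensional closed subscheme $C\subseteq X$ with $C\not\subseteq D$ meets $D$ only in the closed point $x$ (as $A/I$ is local), so $D\times_X\widetilde{C}$ is supported over $x$ and the assertion reduces to: the pullback of $\alpha$ along $\widetilde{C}\times_X U\to U$ is ramified at some point $\tilde{x}$ of $\widetilde{C}$ above $x$.

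Next I would pass to the completion. Because $A$ is excellent and normal, $\hat{A}$ is a normal domain and $\mathfrak{p}_\xi\hat{A}$ is radical; picking a minimal prime $\hat{\mathfrak{p}}$ over it and setting $\hat{R}:=\hat{A}_{\hat{\mathfrak{p}}}$, the extension $R\to\hat{R}$ is an unramified extension of discrete valuation rings with separable residue extension, so $\alpha$ stays ramified over $\mathrm{Frac}(\hat{R})$ with the same conductor $m$. Fix $\tau\in\hat{\mathfrak{p}}$ with $v_{\hat{\mathfrak{p}}}(\tau)=1$ (so $\tau\in\hat{A}$ is a local equation of $\hat{D}$ near its generic point) and an Artin--Schreier representative $f\in\mathrm{Frac}(\hat{R})$ of $\alpha_\xi$ with $v_{\hat{\mathfrak{p}}}(f)=-m$, $p\nmid m$; write $f=\tau^{-m}u$ with $u\in\hat{R}^\times$, say $u=g/h$ with $g,h\in\hat{A}\setminus\hat{\mathfrak{p}}$.

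The heart of the argument is the construction of a suitable formal curve, and this is the step I expect to be the main obstacle. I would choose a smooth formal arc $\hat{C}\colon\Spec k'[[s]]\to\Spec\hat{A}$ through $\hat{x}$ with $v_s(\tau|_{\hat{C}})=N$ and otherwise generic. Then $v_s(f|_{\hat{C}})=-mN+v_s(u|_{\hat{C}})$, and the nonnegative integer $v_s(u|_{\hat{C}})=v_s(g|_{\hat{C}})-v_s(h|_{\hat{C}})$ is bounded above, once $N$ is large and $\hat{C}$ generic, by a constant $B$ depending only on $g$ and $\hat{D}$ (the total contact order at $\hat{x}$ of the components of $\mathrm{div}(g)$ with $\hat{D}$), independent of the chosen arc. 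Taking $N>B/m$ forces $f|_{\hat{C}}$ to have a genuine pole at $\hat{x}$; letting $N$ range over a window of length $p$ and using $p\nmid m$, I can in addition arrange the pole order $mN-v_s(u|_{\hat{C}})$ to be prime to $p$, so that the pulled--back torsor is ramified at $\hat{x}$. The delicate point here is that mere transversality of $\hat{C}$ to $\hat{D}$ does not suffice --- the unit $u$ may specialize to $0$ along \emph{every} arc through the possibly very special point $\hat{x}$ --- so one is forced to bend $\hat{C}$ tangent to $\hat{D}$ to the controlled order $N$, and to check that such $N$ can be chosen uniformly; getting these contact estimates right is the crux.

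Finally I would descend and conclude. The property ``$f|_{\hat{C}}$ has a pole at $\hat{x}$ of the prescribed order'' depends only on the $N'$-jet of $\hat{C}$ for $N'$ large, and since $A\to A/\mathfrak{m}^{N'}=\hat{A}/\mathfrak{m}^{N'}\hat{A}$ is surjective, I may realize that truncated arc by an honest one-dimensional closed subscheme $C\subseteq X=\Spec A$ --- e.g.\ a complete intersection cut out by lifts to $A$ of defining equations of $\hat{C}$ --- with normalization $\widetilde{C}$ and a point $\tilde{x}$ over $x$ at which the pullback of $\alpha$ remains ramified. As $C\not\subseteq D$, the scheme $D\times_X\widetilde{C}$ is supported over $x$, so $V\times_X\widetilde{C}\to U\times_X\widetilde{C}$ is ramified along $D\times_X\widetilde{C}$, as required. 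The remaining cases go the same way, with Kummer theory in place of Artin--Schreier theory when $\mathrm{char}\,\kappa(\xi)\ne p$, and with the conductor-theoretic description of degree-$p$ extensions of local fields in place of an explicit equation in the mixed-characteristic wild case.
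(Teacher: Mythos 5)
First, note that the paper does not prove this statement at all: it is quoted verbatim as Lemma 2.4 of \cite{KS} and used as a black box, so there is no internal proof to compare with; your proposal has to stand on its own as a proof of the Kerz--Schmidt lemma. Judged that way, it has genuine gaps, and the largest one is the step you yourself flag as ``the crux''. The uniform bound $B$ on $v_s(u|_{\hat{C}})$ --- claimed to depend only on $g$ and $\hat{D}$ and to be independent of the arc and of the contact order $N$ --- is asserted, not proved, and it is precisely the nontrivial geometric content of the lemma: for a singular $\hat{D}$ through a very special closed point, one must actually produce an arc with prescribed high contact with $\hat{D}$ along which $g$ and $h$ restrict to nonzero series of controlled order (for instance by first choosing an arc inside $\hat{D}$ avoiding $\mathrm{div}(g)\cup\mathrm{div}(h)$ away from $x$ and then perturbing to order $N$), and then verify the valuation bookkeeping. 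Without that construction the argument is a plan, not a proof. A second, related gap: you take $f$ to be an Artin--Schreier representative of the \emph{local} class $\alpha_\xi$; the difference between $\alpha$ and the class of $f$ over $\mathrm{Frac}(\hat{A})$ is only known to be unramified at $\hat{\mathfrak{p}}$, and its restriction to your curve could a priori cancel the pole you engineered. You need a representative of $\alpha$ over $\mathrm{Frac}(\hat{A})$ (adjusting by $\wp(c')$ with $c'$ $\hat{\mathfrak{p}}$-adically close to the local normalizing element), or an argument controlling the extra class along the curve.

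Two further points would also need repair. The algebraization step is too naive as written: cutting $\Spec A$ by arbitrary lifts of defining equations of $\hat{C}$ modulo $\mathfrak{m}^{N'}$ need not give a one-dimensional subscheme, and even when it does, congruence of ideals modulo $\mathfrak{m}^{N'}$ does not by itself produce a branch at $x$ whose normalization computes the same valuations of $\tau$, $g$, $h$ as the formal arc; the standard remedy is Artin approximation over the henselization (this is where excellence is really used) followed by descent of ramification along the unramified extension of discrete valuation rings, and none of this is in the write-up. Finally, the mixed-characteristic wild case, which the present paper genuinely needs (the base $S$ is allowed to have residue characteristic $p$ with $K$ of characteristic $0$), is dismissed in one sentence; there is no global Artin--Schreier equation there, and replacing it by ``the conductor-theoretic description of degree-$p$ extensions of local fields'' is not an argument --- controlling how the local conductor at $\xi$ behaves under restriction to curves through $x$ is exactly the difficulty in that case. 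So the strategy is reasonable and close in spirit to what one expects behind \cite[Lemma 2.4]{KS}, but as it stands the central estimate, the descent to $\Spec A$, and the mixed-characteristic case are all missing.
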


\begin{proof}[Proof of Lemma \ref{taking a good curve}]
If $I_x=0$, there is nothing to prove. 
We assume $I_x\neq0$. 
First we reduce the general case to the case where $I_x\simeq\mathbb{F}_p$. 
Let $H$ be the subgroup of index $p$ of $I_x$ 
and $W'$ the quotient $W/H$. 
Then, the inertia subgroup $I_{W'/X,x}$ at $x$ of the Galois group $G/H$ of $W'\to X$ 
	is isomorphic to $\mathbb{F}_p$. 
If the $I_x\simeq\mathbb{F}_p$ case is known, 
we can find 
a regular scheme $\overline{C}$ of dimension one, 
a finite morphism $\overline{C}\to\overline{X}$, 
and a geometric point $u$ of $\overline{C}$ lying above $x$ 
such that 
$C=\overline{C}\times_{\overline{X}}X$ is not empty 
and 
the composite 
$\pi_1(\overline{C}_{(v)}\times_{\overline{C}}C,y')
\to\pi_1(\overline{X}_{(x)}\times_{\overline{X}}X,y)\to I_{W'/X,x}$ 
is surjective 
and the surjectivity of 
$\pi_1(\overline{C}_{(v)}\times_{\overline{C}}C,y')\to I_x$ 
follows. 

We assume $I_x\simeq\mathbb{F}_p$. 
By Zariski-Nagata's purity theorem, 
there exists a point $\xi\in\overline{X}$ of codimension one 
such that $x$ lies over the closure $\overline{\{\xi\}}$ and $W\to X$ is ramified at $\xi$. 
Let $V$ be the quotient $W/I_x$ 
and $\overline{V}$ the normalization of $\overline{X}$ in $V$. 
Take a geometric point $v$ of $\overline{V}$ lying over $x$ 
and a point $\eta\in\overline{V}$ lying over $\xi$ such that $v$ lies over the closure $\overline{\{\eta\}}$. 
Then, $W\to V$ is ramified at $\eta$ and $\eta$ is of codimension one. 
By Lemma \ref{KS}, 
We can find 
a regular scheme $\overline{C}$ of dimension one, 
a finite morphism $\overline{C}\to\overline{V}$, 
and a geometric point $u$ of $\overline{C}$ lying above $v$ 
such that 
$C=\overline{C}\times_{\overline{V}}V$ is not empty 
and the $\mathbb{F}_p$-torsor $W\times_VC\to C$ is ramified at $u$. 
Since an inertia group $I_{W/V,v}$ at $v$ is isomorphic to $\mathbb{F}_p$, 
we have the equality $I_{C\times_VW/C,u}=I_{W/V,v}$ of inertia groups. 
Thus $\pi_1(\overline{C}_{(u)}\times_{\overline{C}}C,y')\to I_{W/V,v}=I_x$ is surjective. 
\end{proof}

\subsection{General case}
We now prove Proposition \ref{p-power case} 
for the case where $X$ is general and $G\simeq\mathbb{Z}/p^e\mathbb{Z}$ 
for some prime number $p$ and some integer $e\geq0$: 
\begin{Lem}
\label{p-power cov}
Let $X$ be a regular scheme separated of finite type over $S$. 
Let $\mathcal{F}$ and $\mathcal{F}'$ be constructible complexes of 
$\Lambda$-modules and $\Lambda'$-modules respectively on $X$ 
and $W\to X$ a $\mathbb{Z}/p^e\mathbb{Z}$-torsor trivializing $\mathcal{F}$ and $\mathcal{F}'$. 
Assume that $\mathcal{F}$ and $\mathcal{F}'$ have universally the same conductors over $S$ 
and that $X$ has a regular compactification over $S$. 
Then $\mathcal{F}$ and $\mathcal{F}'$ have the same virtual wild ramification over $S$. 
\end{Lem}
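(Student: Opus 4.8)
The plan is to reduce the statement of Lemma \ref{p-power cov} to the curve case (Lemma \ref{curve case}) by means of Lemma \ref{taking a good curve}. The point of Lemma \ref{curve case} is that it already establishes the same virtual wild ramification for $\mathbb{Z}/p^e\mathbb{Z}$-torsors on regular $S$-curves, so the task is to propagate that along curves sitting inside a compactification of $X$.

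**First** I would reduce to the smooth case by d\'evissage: since $X$ is assumed regular here we do not need to stratify, but we do need to pick a regular compactification $\overline{X}$ of $X$ over $S$ (provided by hypothesis), and it suffices to check, for every geometric point $x$ over a closed point of $\overline{X}$ and every pro-$p$ element $\sigma$ of $\pi_1(\overline{X}_{(x)}\times_{\overline{X}}X,y)$, that $\Gamma^\sigma([\mathcal{F}_y])=\Gamma^\sigma([\mathcal{F}'_y])$. As in the proof of Lemma \ref{curve case}, the torsor $W\to X$ extends to the normalization $\overline{W}$ of $\overline{X}$ in $W$, the elements $[\mathcal{F}_y]$ and $[\mathcal{F}'_y]$ come from elements $M,M'$ of $K(\Lambda[I_x])$ for the inertia group $I_x$, and it suffices to prove $\Gamma^\sigma(M)=\Gamma^\sigma(M')$ for $\sigma\in I_x$. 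Since $I_x$ is a subgroup of the cyclic $p$-group $G$, it is itself cyclic of $p$-power order.

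**Next** comes the core step. Apply Lemma \ref{taking a good curve} to $X\subset\overline{X}$, the cyclic $p$-group $G$, the torsor $W\to X$, and the chosen geometric point $x$: this produces a regular one-dimensional scheme $\overline{C}$, a finite morphism $\bar g:\overline{C}\to\overline{X}$, and a geometric point $v$ of $\overline{C}$ over $x$ such that $C=X\times_{\overline{X}}\overline{C}$ is nonempty and the composite $\pi_1(\overline{C}_{(v)}\times_{\overline{C}}C,y')\to\pi_1(\overline{X}_{(x)}\times_{\overline{X}}X,y)\to I_x$ is surjective. The scheme $\overline{C}$ is a regular proper $S$-scheme of dimension one, so $C\to\overline{C}$ is a canonical regular compactification of the regular $S$-curve $C$ over $S$. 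By Lemma \ref{pullback}, $\bar g^*\mathcal{F}=g^*\mathcal{F}$ and $g^*\mathcal{F}'$ have universally the same conductors over $S$ (where $g:C\to X$), and they are trivialized by the pulled-back $G$-torsor $W\times_XC\to C$, hence also by the $I_x$-subtorsor; so Lemma \ref{curve case} applies and gives that $g^*\mathcal{F}$ and $g^*\mathcal{F}'$ have the same virtual wild ramification along $\overline{C}\setminus C$. In particular, at the geometric point $v$, writing $N,N'$ for the images of $M,M'$ under the pullback $K(\Lambda[I_x])\to K(\Lambda[I_{C,v}])$ along the surjection $\rho:I_{C,v}\twoheadrightarrow I_x$, we get $\Gamma^\tau(N)=\Gamma^\tau(N')$ for every $\tau\in I_{C,v}$. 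Now, fixed-space dimension is compatible with pullback along a surjection of finite cyclic $p$-groups: for $\sigma\in I_x$ choose $\tau\in I_{C,v}$ with $\rho(\tau)=\sigma$, and then $N^{\langle\tau\rangle}=(M)^{\langle\sigma\rangle}$ as $\Lambda$-vector spaces, so $\Gamma^\tau(N)=\Gamma^\sigma(M)$, and likewise for $M'$. Combining, $\Gamma^\sigma(M)=\Gamma^\sigma(M')$ for every $\sigma\in I_x$, which is exactly what we needed; this completes the proof.

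**The main obstacle** I anticipate is the bookkeeping around inertia groups and the compatibility of $\Gamma^\bullet$ with the surjection $\rho:I_{C,v}\to I_x$ — one must be careful that the elements $M,M'$ of $K(\Lambda[I_x])$ are genuinely pulled back to the corresponding classes for the curve $C$ via the map on fundamental groups, and that the pro-$p$ element $\sigma$ one wants to test indeed lifts to $I_{C,v}$ (which is precisely the content of the surjectivity in Lemma \ref{taking a good curve}). Everything else — the reduction to closed points, the identification of $\overline{C}\to S$ as giving a canonical regular compactification of $C$, and the invocation of Lemmas \ref{pullback} and \ref{curve case} — is routine.
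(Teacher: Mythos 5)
Your proof is correct and follows essentially the same route as the paper: reduce to closed points of the regular compactification, invoke Lemma \ref{taking a good curve} to produce a curve whose inertia surjects onto $I_x$, apply Lemma \ref{curve case} to that curve, and transfer the equality of fixed-part dimensions back via the surjection. One small slip: the preservation of ``universally the same conductors'' under $g^*$ follows directly from Definition \ref{sc} (compose test curves with $g$), not from Lemma \ref{pullback}, which concerns same virtual wild ramification; this does not affect the argument.
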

\begin{proof}
Let $\overline{X}$ be a regular compactification of $X$ over $S$. 
It suffices to show that 
for every geometric point $x$ {\it over a closed point} of $\overline{X}$ 
and every pro-$p_x$ element $\sigma\in\pi_1(\overline{X}_{(\bar{x})}\times_{\overline{X}}X,y)$, 
we have $\Gamma^\sigma([\mathcal{F}_y])=\Gamma^\sigma([\mathcal{F}'_y])$. 
By Lemma \ref{taking a good curve}, 
we can find 
an $S$-curve $C$, an $S$-morphism $g:C\to X$, and a geometric point $v$ of $\overline{C}$ over $x$ 
such that the composite  
$\pi_1(\overline{C}_{(v)}\times_{\overline{C}}C,y')\to\pi_1(\overline{X}_{(x)}\times_{\overline{X}}X,y)\to I_x$ 
is surjective. 
By Lemma \ref{curve case}, 
$g^*\mathcal{F}$ and $g^*\mathcal{F}'$ have the same virtual wild ramification along $\overline{C}\setminus C$. 
Therefore the assertion follows. 
\end{proof}

We will need to consider the existence of a compactification to prove Proposition \ref{p-power case}. 
\begin{Lem}
\label{cptf}
Let $X$ be a normal $S$-scheme separated of finite type 
and $V\to X$ be a finite \'etale morphism. 
Then, for any normal compactification $\overline{V}$ of $V$ over $S$, 
there exists a normal compactification $\overline{X}$ of $X$ over $S$ 
such that the normalization of $\overline{X}$ in $V$ dominates $\overline{V}$. 
\end{Lem}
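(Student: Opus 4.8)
The plan is to start from an arbitrary normal compactification of $X$, enlarge it by a blow-up, and arrange that its normalization in $V$ becomes large enough to admit an $S$-morphism to $\overline{V}$ which restricts to the identity on $V$ --- that is, dominates $\overline{V}$. Two preliminary reductions: first, since a composite of $S$-morphisms each restricting to the identity on $V$ again does so, it suffices to prove the assertion after replacing $\overline{V}$ by any normal compactification of $V$ that dominates it. Second, treating the finitely many connected components of $X$ separately and taking disjoint unions --- after accordingly replacing $\overline{V}$ by the disjoint union of the normalizations of the closures in $\overline{V}$ of the preimages in $V$ of these components --- we may and do assume that $X$ is connected, hence integral (being normal).

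First I would fix, using Nagata's compactification theorem and then normalization (finite since $S$ is excellent), a normal compactification $\overline{X}_1$ of $X$ over $S$; it is integral, as $X$ is integral and dense in it. Let $\overline{V}_1$ be the normalization of $\overline{X}_1$ in $V$: a normal compactification of $V$, finite over $\overline{X}_1$, the morphism $\overline{V}_1\to\overline{X}_1$ restricting to $V\to X$ over $X$. Then let $\overline{V}_2$ be the normalization of the schematic closure of $V$ in $\overline{V}\times_S\overline{V}_1$, where $V$ is embedded via the two open immersions $V\hookrightarrow\overline{V}$ and $V\hookrightarrow\overline{V}_1$. This $\overline{V}_2$ is a normal compactification of $V$, proper over $S$, and the two projections induce morphisms $\overline{V}_2\to\overline{V}$ and $\overline{V}_2\to\overline{V}_1$ that restrict to the identity on $V$. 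Consequently the composite $f\colon\overline{V}_2\to\overline{V}_1\to\overline{X}_1$ is proper, restricts to the finite \'etale morphism $V\to X$ over the dense open $X\subset\overline{X}_1$, and is in particular flat over $X$.

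The crucial step is to transport the modification down to $\overline{X}_1$. Applying the Raynaud--Gruson flattening theorem to $f$ and the dense open $X$, I obtain a blow-up $\overline{X}_2'\to\overline{X}_1$ whose center is disjoint from $X$ and such that the strict transform $\overline{V}_2'$ of $\overline{V}_2$ is flat over $\overline{X}_2'$. Here $\overline{X}_2'$ is still integral --- a blow-up of an integral scheme along a nowhere dense closed subscheme --- and contains $X$ as a dense open over which the blow-up is an isomorphism, while $\overline{V}_2'\to\overline{X}_2'$ is proper, flat, of finite type, and has $0$-dimensional generic fibre, namely the generic fibre of $V\to X$. Since $\overline{X}_2'$ is integral, every irreducible component of the flat scheme $\overline{V}_2'$ dominates $\overline{X}_2'$ with $0$-dimensional generic fibre, so $\dim\overline{V}_2'=\dim\overline{X}_2'$; the dimension formula for flat finite-type morphisms then forces every fibre of $\overline{V}_2'\to\overline{X}_2'$ to be $0$-dimensional, whence this morphism is quasi-finite, hence --- being proper --- finite.

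Finally I would set $\overline{X}:=(\overline{X}_2')^\nu$, the normalization, which is a normal compactification of $X$ over $S$. As $\overline{V}_2'$ is reduced and finite over $\overline{X}_2'$, with total ring of fractions the product of the function fields of the components of $V$, its normalization $(\overline{V}_2')^\nu$ is precisely the normalization of $\overline{X}$ in $V$; composing the natural morphisms $(\overline{V}_2')^\nu\to\overline{V}_2'\to\overline{V}_2\to\overline{V}$, each of which restricts to the identity on $V$, then exhibits the required $S$-morphism, so the normalization of $\overline{X}$ in $V$ dominates $\overline{V}$. I expect the main obstacle to be exactly this pushing-down: there is no reason for the normalization in $V$ of a \emph{fixed} compactification of $X$ to dominate $\overline{V}$, so $\overline{X}_1$ must genuinely be modified, and the subtle point is to use flattening so that the strict transform stays \emph{finite}, not merely proper, over the modified base.
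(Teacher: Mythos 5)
Your argument is correct and is essentially the paper's own proof (which follows Vidal, Proposition 2.1.1(ii)): take a compactification $Y$ of $X$ and a compactification of $V$ dominating $\overline{V}$ with a morphism to $Y$, apply the Raynaud--Gruson flattening theorem to blow up $Y$ along a center in $Y\setminus X$ so that the strict transform becomes flat, hence finite, over the blown-up scheme, and then normalize. Your extra details (the explicit construction of the dominating compactification via the closure in $\overline{V}\times_S\overline{V}_1$ and the flat-proper-quasi-finite argument for finiteness) only spell out steps the paper leaves implicit.
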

\begin{proof}
The proof is the same as the latter half of the proof of \cite[Proposition 2.1.1 (ii)]{V}. 
For the reader's convenience, we include the proof. 
Take an arbitrary compactification $Y$ of $X$ 
and a compactification $W$ of $V$ dominating $\overline{V}$ 
with a morphism $W\to Y$ extending $V\to X$. 
By \cite[I.5.2.2]{GR}, we can find 
a blow-up $Y'\to Y$ along a closed subscheme contained in $Y\setminus X$ 
such that $W'\to Y'$ is flat, 
where $W'\to W$ is the blow-up induced by $Y'\to Y$. 
Then, $W'\to Y'$ is finite, 
and hence it suffices to take $\overline{X}$ to be the normalization of $Y'$. 
\end{proof}
\begin{Lem}
\label{red to the cyclic case}
Let $X$ be a normal $S$-scheme separated of finite type, 
$G$ a finite group, 
$\mathcal{F}$ and $\mathcal{F}'$ smooth complexes of 
$\Lambda$-modules and $\Lambda'$-modules respectively on $X$, 
and $W\to X$ a $G$-torsor trivializing $\mathcal{F}$ and $\mathcal{F}'$. 
Consider subgroups $H_1,\ldots,H_k$ of $G$ such that 
any element $\sigma\in G$ of prime-power order belongs to 
$H_i$ for some $i=1,\ldots,k$. 
Set $V_i=W/H_i$ and 
assume $\mathcal{F}|_{V_i}$ and $\mathcal{F}'|_{V_i}$ 
have the same virtual wild ramification over $S$. 
Then $\mathcal{F}$ and $\mathcal{F}'$
have the same virtual wild ramification over $S$. 
\end{Lem}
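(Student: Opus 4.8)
The statement reduces ``same virtual wild ramification'' for a torsor under a general finite group $G$ to the case of subgroups $H_1,\dots,H_k$ covering all elements of prime-power order. The plan is to verify the defining condition of Definition \ref{svwr} directly: by the d\'evissage built into that definition and by Lemma \ref{pullback} (applied to the closed points of $S$ and their strict localizations), it suffices to check, for each geometric point $x$ over a closed point of a suitable normal compactification $\overline{X}$ and each pro-$p_x$ element $\sigma\in\pi_1(\overline{X}_{(x)}\times_{\overline{X}}X,y)$, the equality $\Gamma^\sigma([\mathcal{F}_y])=\Gamma^\sigma([\mathcal{F}'_y])$. The key observation is that $\Gamma^\sigma$ depends only on the image of $\sigma$ in the Galois group $I_x\subset G$ of the torsor, since $\mathcal{F}$ and $\mathcal{F}'$ are trivialized by $W\to X$; so everything takes place at the level of virtual representations $M,M'\in K(\Lambda[G])$ and $M|_{I_x}$, $M'|_{I_x}$.

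First I would fix such an $x$ and $\sigma$, and let $\bar{\sigma}\in I_x\subseteq G$ be its image; since $\sigma$ is pro-$p_x$, $\bar{\sigma}$ has order a power of $p_x$, hence prime-power order, so by hypothesis $\bar{\sigma}\in H_i$ for some $i$. The point is then to compare $\Gamma^\sigma([\mathcal{F}_y])$ with the analogous quantity computed on $V_i=W/H_i$. Concretely, $W\to V_i$ is an $H_i$-torsor trivializing $\mathcal{F}|_{V_i}$ and $\mathcal{F}'|_{V_i}$, and one needs a compactification $\overline{V_i}$ of $V_i$ over $S$ together with a geometric point $v_i$ over (the image of) $x$ so that the inertia group at $v_i$ in $H_i$ contains $\bar{\sigma}$ and so that the corresponding $\Gamma$-invariant agrees with $\Gamma^\sigma([\mathcal{F}_y])$; this is because both are just $\dim$ of the $\overline{\langle\bar{\sigma}\rangle}$-fixed part of the stalk, which is intrinsic to the representation and does not change under pullback along $\pi_1(\overline{V_i}_{(v_i)}\times V_i)\to\pi_1(\overline{X}_{(x)}\times X)$. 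Then the assumed equality $\Gamma^\sigma([\mathcal{F}|_{V_i,y_i}])=\Gamma^\sigma([\mathcal{F}'|_{V_i,y_i}])$, which holds since $\mathcal{F}|_{V_i}$ and $\mathcal{F}'|_{V_i}$ have the same virtual wild ramification over $S$, gives exactly $\Gamma^\sigma([\mathcal{F}_y])=\Gamma^\sigma([\mathcal{F}'_y])$.

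The main obstacle is the bookkeeping with compactifications: Definition \ref{svwr}(\ref{sm case}) only asserts the existence of \emph{some} normal compactification $\overline{V_i}$ for which the condition holds, so a priori the compactifications furnished by the hypotheses for the various $V_i$ need not be compatible with one another or with a single $\overline{X}$. I would resolve this by invoking Lemma \ref{cptf}: starting from a normal compactification $\overline{V_i}$ witnessing the same virtual wild ramification for $\mathcal{F}|_{V_i}$ and $\mathcal{F}'|_{V_i}$, one extracts a normal compactification $\overline{X}$ of $X$ whose normalization in $V_i$ dominates $\overline{V_i}$, and then checks that the specialization maps on $\pi_1$ (discussed after the first Definition in Section 2) carry pro-$p_x$ elements to pro-$p$ elements and commute with the formation of $\Gamma^\sigma$, so the validity of the condition passes from $\overline{V_i}$ up to $\overline{X}$. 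One does this one index $i$ at a time: for a fixed $x$ and $\sigma$, only the single relevant $V_i$ (with $\bar\sigma\in H_i$) is needed, so there is no need to reconcile all the $\overline{V_i}$ simultaneously --- it is enough that for every pro-$p_x$ element some suitable $\overline{X}$ exists, and in fact Lemma \ref{pullback} lets us replace $\overline{X}$ freely. The remaining steps --- matching inertia groups at $v_i$ and $x$, and identifying the fixed-part dimensions --- are then routine.
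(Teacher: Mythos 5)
Your proposal is correct and follows essentially the same route as the paper: pick the index $i$ with $\bar{\sigma}\in H_i$, use Lemma \ref{cptf} to arrange that the normalization of $\overline{X}$ in $V_i$ dominates the compactification $\overline{V_i}$ witnessing the hypothesis, and observe that $\Gamma^\sigma$ factors through the image $\bar{\sigma}$ in the inertia group inside $H_i$. The only cosmetic difference is that the paper applies Lemma \ref{cptf} once to get a single $\overline{X}$ dominating all the $\overline{V_i}$ simultaneously, whereas you treat one index at a time; your variant is fine, but the final reconciliation is done by passing to a common dominating compactification (using the specialization maps on local fundamental groups), not by Lemma \ref{pullback}, which concerns pullback of sheaves rather than change of compactification.
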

\begin{proof}
For each $i$, we take a normal compactification $\overline{V_i}$ of $V_i$ over $S$ 
such that 
$\mathcal{F}|_{V_i}$ and $\mathcal{F}'|_{V_i}$ 
have the same virtual wild ramification along $\overline{V_i}\setminus V_i$. 
By Lemma \ref{cptf}, 
we can take a normal compactification $\overline{X}$ of $X$ over $S$ 
such that the normalization $\overline{V'_i}$ of $\overline{X}$ in $V_i$ dominates $\overline{V_i}$ 
for every $i$. 
We claim that $\mathcal{F}$ and $\mathcal{F}'$ have the same virtual wild ramification along $\overline{X}\setminus X$. 

Let $x$ be a geometric point of $\overline{X}$ and 
$\sigma$ be a pro-$p_x$ element of $\pi_1(\overline{X}_{(x)}\times_{\overline{X}}X,y)$, 
where $y$ is a geometric point. 
Fix a geometric point $w$ of the normalization of $\overline{X}$ in $W$ lying above $x$ 
and take $i$ such that the image $\bar{\sigma}$ of $\sigma$ in $G$ belongs to $H_i$. 
Then $\bar{\sigma}$ belongs to the inertia group $I_v\subset H_i$ at the image $v\to\overline{V'_i}$ of $w$ 
and hence comes from a pro-$p_v$ element $\sigma'$ of $\pi_1(\overline{V'_i}_{(v)}\times_{\overline{V'_i}}V_i,y')$. 
Since $\Gamma^\sigma([\mathcal{F}_y])=\Gamma^{\sigma'}([\mathcal{F}_{y'}])$ and the same for $\mathcal{F}'$, 
the assertion follows. 
\end{proof}


\begin{proof}[Proof of Proposition \ref{p-power case}]
Take all maximal cyclic subgroups $H_1,\ldots,H_k$ of prime-power order of $G$. 
By the assumption, we can apply Lemma \ref{p-power cov} to 
$\mathcal{F}|_{W/H_i}$ and $\mathcal{F}'|_{W/H_i}$ 
and thus $\mathcal{F}|_{W/H_i}$ and $\mathcal{F}'|_{W/H_i}$ 
have the same virtual wild ramification over $S$. 
Then, by Lemma \ref{red to the cyclic case}, 
$\mathcal{F}$ and $\mathcal{F}'$ have the same virtual wild ramification over $S$. 
\end{proof}

\begin{proof}[Proof of Theorem \ref{A to W}]
By virtue of Proposition \ref{W to A}, it suffices to show that 
having universally the same conductors implies having the same virtual wild ramification 
for $\mathcal{F}$ and $\mathcal{F}'$. 
By devissage, we may assume $X$ is regular and $\mathcal{F}$ and $\mathcal{F}'$ are smooth. 

Since every regular $S$-surface has a regular compactification over $S$ \cite{L}, 
the assertion follows from Proposition \ref{p-power case}. 
\end{proof}

\section{The Euler-Poincar\'e characteristic}
We prove Theorem \ref{A to X}. 
The following proposition is a consequence of the result of Deligne-Illusie. 
\begin{Prop}[cf. Proposition 0.2 in \cite{SY}]
\label{W to X}
Let 
$k$ be a perfect field 
and $\bar{k}$ an algebraic closure of $k$. 
Let $X$ be a scheme separated of finite type over $k$ 
and $\mathcal{F}$ and $\mathcal{F}'$ constructible complexes of 
$\Lambda$-modules and $\Lambda'$-modules respectively on $X$. 
Assume that $\mathcal{F}$ and $\mathcal{F}'$ have the same virtual wild ramification $(\text{over }k)$. 
Then, we have $\chi _c(X_{\bar{k}},\mathcal{F})=\chi _c(X_{\bar{k}},\mathcal{F}')$. 
\end{Prop}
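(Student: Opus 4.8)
The plan is to reduce to the theorem of Deligne--Illusie (\cite[Th\'eor\`eme 2.1]{I}), which is stated for the stronger notion of ``same wild ramification'' defined via the Brauer trace. So the first task is to bridge the gap between the Brauer-trace formulation and our $\Gamma^\sigma$-formulation. By d\'evissage on a common stratification, I would reduce to the case where $X$ is normal and connected and both $\mathcal{F}$ and $\mathcal{F}'$ are smooth complexes trivialized by a common $G$-torsor $W\to X$ for a finite group $G$, corresponding to virtual representations $M$ and $M'$ of $G$. In this situation the hypothesis says $\Gamma^\sigma(M) = \Gamma^\sigma(M')$ for every element $\sigma\in G$ of $p$-power order (restricted appropriately to inertia, using a normal compactification as in Definition \ref{svwr}). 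The point is that although $\Gamma^\sigma$ is genuinely weaker than $\Trbr(\sigma,-)$ at the level of a single $\sigma$, the two notions generate the same equivalence relation when one also fixes the virtual rank: by the identity
$$\Gamma^\sigma(M)=\frac{1}{|\langle\sigma\rangle|}\sum_{g\in\langle\sigma\rangle}\Trbr(g,M)$$
recalled in the Remark after Definition \ref{svwr}, knowing $\Gamma^{\sigma}$ for all $p$-power-order $\sigma$ together with $\rk$ is equivalent, by M\"obius inversion over the cyclic $p$-groups $\langle\sigma\rangle$, to knowing $\Trbr(\sigma,-)$ for all such $\sigma$.

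Concretely, I would argue: fix a $p$-power-order element $\sigma\in G$ of order $p^n$ and proceed by induction on $n$. For $n=0$ the value $\Trbr(1,M)=\dim M = \rk(M)$ is determined by the rank hypothesis. For $n\geq 1$, the identity above gives
$$\Trbr(\sigma,M)=|\langle\sigma\rangle|\,\Gamma^\sigma(M)-\sum_{\substack{g\in\langle\sigma\rangle\\ g\neq\sigma,\, \langle g\rangle\subsetneq\langle\sigma\rangle}}\Trbr(g,M)-\sum_{\langle g\rangle=\langle\sigma\rangle,\,g\neq\sigma}\Trbr(g,M),$$
and since all proper powers of $\sigma$ and all other generators of $\langle\sigma\rangle$ have order $\leq p^n$ with strictly smaller cyclic subgroup or are Galois-conjugate, a careful bookkeeping (or simply running the induction on the subgroup $\langle\sigma\rangle$ directly, computing $\sum_{\langle g\rangle = \langle\sigma\rangle}\Trbr(g,M)$ from $\Gamma^\sigma(M)$ and the smaller-order data) shows $\Trbr(\sigma,M)$ is determined by the collection $\{\Gamma^\tau(M)\}_\tau$ together with $\rk(M)$. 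Hence $\Trbr(\sigma,M)=\Trbr(\sigma,M')$ for every $p$-power-order $\sigma$, which is exactly the condition that $\mathcal{F}$ and $\mathcal{F}'$ have the same wild ramification in the sense of \cite[D\'efinition 2.3.1]{V} / \cite[Th\'eor\`eme 2.1]{I} along the chosen compactification. Note the rank equality $\rk(\mathcal{F}_x)=\rk(\mathcal{F}'_x)$ needed here follows from ``same virtual wild ramification'' by taking $\sigma=1$.

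Having matched the two notions, I would then invoke \cite[Th\'eor\`eme 2.1]{I} directly: Deligne--Illusie prove that constructible complexes on a separated finite-type $k$-scheme with the same wild ramification have equal Euler--Poincar\'e characteristics $\chi_c(X_{\bar k},-)$ after base change to $\bar k$. This gives $\chi_c(X_{\bar k},\mathcal{F})=\chi_c(X_{\bar k},\mathcal{F}')$ and completes the proof. The main obstacle is the first step: one must be careful that the d\'evissage is compatible with the notion of ``same virtual wild ramification'' (which is defined stratum-by-stratum and compactification-by-compactification), so that after passing to strata one really does land in the setting where a single $G$-torsor trivializes everything and the inductive Brauer-trace computation applies; the additivity of both $\chi_c$ and of the wild-ramification conditions along a stratification is what makes this legitimate, and this is presumably already packaged in \cite[Proposition 0.2]{SY}, on which the statement is modeled.
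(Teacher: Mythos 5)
There is a genuine gap at the heart of your reduction. The key claim --- that knowing $\Gamma^\sigma(M)$ for all $p$-power-order $\sigma$ together with $\rk(M)$ determines $\Trbr(\sigma,M)$ by ``M\"obius inversion over the cyclic $p$-groups'' --- is false. The identity $\Gamma^\sigma(M)=\frac{1}{|\langle\sigma\rangle|}\sum_{g\in\langle\sigma\rangle}\Trbr(g,M)$ gives you one equation per cyclic subgroup, so inversion over the subgroup lattice only recovers the \emph{sum} $\sum_{\langle g\rangle=\langle\sigma\rangle}\Trbr(g,M)$ over all generators of $\langle\sigma\rangle$, i.e.\ essentially the Galois trace of $\Trbr(\sigma,M)$; the individual Brauer traces of the $\varphi(p^n)$ generators are Galois conjugates of one another but need not be equal, and they cannot be separated. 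Concretely, for $G=\mathbb{Z}/p\mathbb{Z}$ with $p\geq 3$ and $M$, $M'$ two distinct nontrivial characters, one has $\rk(M)=\rk(M')$ and $\Gamma^\sigma(M)=\Gamma^\sigma(M')=0$ for every generator $\sigma$, yet $\Trbr(\sigma,M)=\zeta\neq\zeta^a=\Trbr(\sigma,M')$. This is consistent with the paper's explicit remark that ``same virtual wild ramification'' is strictly \emph{weaker} than the Brauer-trace notion of Illusie--Vidal, so no argument can upgrade the hypothesis to the hypothesis of \cite[Th\'eor\`eme 2.1]{I} and then quote that theorem as a black box.

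The actual proof goes the other way around: instead of recovering the Brauer traces, one rewrites the Euler characteristic so that only the Galois-averaged quantities appear. After the d\'evissage you describe (which is fine), Proposition \ref{EP and brtr} (i), (iii) gives
$\chi_c(X,\mathcal{F})=\frac{1}{|G|}\sum_{\sigma}\Tr(\sigma,H_c^*(W,\mathbb{Q}_l))\cdot\Trbr(\sigma,M)$
with the sum over $p$-power-order elements fixing a point of $\overline{W}$. Since $\chi_c(X,\mathcal{F})$ is a rational integer and the coefficients $\Tr(\sigma,H_c^*(W,\mathbb{Q}_l))$ are rational integers, applying $\frac{1}{[K:\mathbb{Q}]}\Tr_{K/\mathbb{Q}}$ and Lemma \ref{brtr} replaces each $\Trbr(\sigma,M)$ by $\frac{1}{p-1}\bigl(p\cdot\Gamma^\sigma(M)-\Gamma^{\sigma^p}(M)\bigr)$ without changing the left-hand side. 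The resulting formula involves only the $\Gamma^\sigma$'s, which agree for $M$ and $M'$ by hypothesis, and the $l$-independence of the coefficients (Proposition \ref{EP and brtr} (ii)) handles the fact that $\Lambda$ and $\Lambda'$ may have different characteristics. You should restructure your argument along these lines rather than attempting to reconstruct the Brauer traces.
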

\begin{Prop}
\label{EP and brtr}
Let $X$ be a normal connected scheme separated of finite type over an algebraically closed field, 
$\mathcal{F}$ be a smooth complex of $\Lambda$-module on $X$, 
and $W\to X$ a $G$-torsor, for a finite group $G$, trivializing $\mathcal{F}$. 
We denote the virtual representation of $G$ corresponding to $\mathcal{F}$ by $M$. 
\begin{enumerate}[{\rm (i)}]
\item {\rm \cite[Lemme 2.3 and Remarque 2.4]{I}}
We have 
$$
\chi_c(X,\mathcal{F})=\frac{1}{|G|}\sum_{\sigma\in G}\Tr(\sigma,H_c^*(W,\mathbb{Q}_l))\cdot\Trbr(\sigma,M),
$$
where $l$ is the characteristic of $\Lambda$ 
and for $\sigma\in G$ of order divided by $l$ we put $\Trbr(\sigma,-)=0$. 
\item {\rm \cite[3.3]{DL}}
$\Tr(\sigma,H_c^*(W,\mathbb{Q}_l))$ is an integer independent of $l\neq p$. 

\item
Let $\overline{X}$ be a normal compactification of $X$ 
and $\overline{W}$ a normalization of $\overline{X}$ in $W$. 
For an element $\sigma\in G$, if $\Tr(\sigma,H_c^*(W,\mathbb{Q}_l))\neq0$ 
then $\sigma$ is of order a power of $p$ and fixes some point of $\overline{W}$. 
\end{enumerate}
\end{Prop}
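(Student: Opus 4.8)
Parts (i) and (ii) are the quoted results \cite[Lemme 2.3 and Remarque 2.4]{I} and \cite[3.3]{DL}, so the substance is (iii); the plan is to deduce its two assertions---that $\sigma$ has $p$-power order and that $\sigma$ fixes a point of $\overline{W}$---separately from the hypothesis $\Tr(\sigma,H_c^*(W,\mathbb{Q}_l))\neq0$.

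For the order of $\sigma$ I would use the Deligne--Lusztig reduction. Write $\sigma=su$ with $s$ of order prime to $p$, $u$ of order a power of $p$ and $su=us$. By \cite[3.2]{DL}, $\Tr(\sigma,H_c^*(W,\overline{\mathbb{Q}}_l))=\Tr(u,H_c^*(W^s,\overline{\mathbb{Q}}_l))$, where $W^s$ is the fixed locus of $s$ on $W$. Since $W\to X$ is a $G$-torsor, $G$ acts freely on $W$, so $W^s=\emptyset$ as soon as $s\neq1$, and then the trace vanishes. Hence $\Tr(\sigma,H_c^*(W,\mathbb{Q}_l))\neq0$ forces $s=1$, i.e.\ $\sigma$ is of $p$-power order.

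For the fixed-point assertion I would argue by contraposition, using only that $\overline{X}$, hence $\overline{W}$, is proper over the base field. Put $Z=\overline{W}\setminus W$, a closed---and therefore proper---$G$-stable subscheme of $\overline{W}$. The excision triangle
\[
R\Gamma_c(W,\overline{\mathbb{Q}}_l)\longrightarrow R\Gamma(\overline{W},\overline{\mathbb{Q}}_l)\longrightarrow R\Gamma(Z,\overline{\mathbb{Q}}_l)\xrightarrow{\ +1\ }
\]
is $G$-equivariant, whence $\Tr(\sigma,H_c^*(W))=\Tr(\sigma,H^*(\overline{W}))-\Tr(\sigma,H^*(Z))$. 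If $\sigma$ fixes no point of $\overline{W}$, then it fixes no point of $Z\subseteq\overline{W}$ either, so the fixed-point schemes $\overline{W}^\sigma$ and $Z^\sigma$ are empty; applying the Lefschetz--Verdier fixed point formula (SGA~5, Expos\'e~III) to the automorphism $\sigma$ of the proper schemes $\overline{W}$ and $Z$ expresses $\Tr(\sigma,H^*(\overline{W}))$ and $\Tr(\sigma,H^*(Z))$ as sums of local terms indexed by $\pi_0$ of these empty schemes, so they vanish. Therefore $\Tr(\sigma,H_c^*(W))=0$, and the contrapositive gives (iii).

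The main obstacle is this last step. For $\sigma$ of $p$-power order the naive Lefschetz fixed point formula is false---the local terms are not the transversal ones, as the translation $x\mapsto x+1$ of $\mathbb{P}^1$ already shows, whose Lefschetz number is $2$ although it has a single fixed point---so one cannot conclude by counting fixed points; what one genuinely uses is only that every local term of the Lefschetz--Verdier formula is supported on the fixed-point scheme, so that they all vanish once that scheme is empty. A more elementary substitute would reduce to the case of curves and then combine the Grothendieck--Ogg--Shafarevich formula with the vanishing of $\Trbr(\sigma h,\overline{\mathbb{Q}}_l[\langle\sigma\rangle])$ for $h$ in a proper subgroup of $\langle\sigma\rangle$ (which holds precisely because, when $\sigma$ fixes no point of $\overline{W}$, each inertia subgroup of $\langle\sigma\rangle$ acting on $\overline{W}$ is proper), but this seems no shorter than the direct appeal to Lefschetz--Verdier.
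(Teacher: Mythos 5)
Your argument is correct and coincides in substance with the paper's: the paper disposes of (i) and (ii) by citation and for (iii) simply refers to the beginning of the proof of \cite[Lemme 2.5]{I}, which is exactly the argument you reconstruct (the Deligne--Lusztig identity $\Tr(\sigma,H_c^*(W,\mathbb{Q}_l))=\Tr(u,H_c^*(W^s,\mathbb{Q}_l))$ together with freeness of the $G$-action on the torsor $W$ to force $p$-power order, and the support property of the Lefschetz--Verdier local terms on the proper compactification to force a fixed point). Your closing caveat---that one uses only the localization of the terms on the fixed-point scheme, not any naive computation of them---is the right one.
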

\begin{proof}
For (iii), see the beginning of the proof of \cite[Lemme 2.5]{I}. 
\end{proof}

\begin{proof}[Proof of Proposition \ref{W to X}]
The way of the proof is the same as that of \cite[Proposition 0.2]{SY}. 
We may assume $k$ is algebraically closed. 
By taking a decomposition $X=\coprod_i X_i$ as in Definition \ref{svwr} (ii), 
we may assume that $X$ is normal and connected and that $\mathcal{F}$ and $\mathcal{F}'$ are smooth complexes. 
Then by Proposition \ref{EP and brtr} (i)(iii), 
we have 
$$
\chi_c(X,\mathcal{F})=\frac{1}{|G|}\sum_{\sigma\in S}\Tr(\sigma,H_c^*(W,\mathbb{Q}_l))\cdot\Trbr(\sigma,M),
$$
where $S=\{\sigma\in G\mid\sigma\text{ is of }p\text{-power order and fixes some point of }\overline{W}\}$. 
Since $\chi_c(X,\mathcal{F})$ is an integer, by Lemma \ref{brtr}, 
$$
\chi_c(X,\mathcal{F})=\frac{1}{|G|}\sum_{\sigma\in S}\Tr(\sigma,H_c^*(W,\mathbb{Q}_l))\cdot
\frac{p\cdot\Gamma^\sigma(M)-\Gamma^{\sigma^p}(M)}{p-1}.
$$
Hence the assertion follows from Proposition \ref{EP and brtr} (ii). 
\end{proof}

We use the following lemma to reduce Theorem \ref{A to X} to the surface case. 
\begin{Lem}
\label{red to surface case}
Let $f:X\to Y$ be a morphism 
of schemes separated of finite type 
over an algebraically closed field 
and $\mathcal{F}$ and $\mathcal{F}'$ constructible complexes 
of $\Lambda$-modules and $\Lambda '$-modules respectively on $X$. 
Assume that 
for every smooth curve $C$ 
and every morphism $g:C\to Y$, 
we have $\chi_c(X\times_YC,g'^*\mathcal{F})=\chi_c(X\times_YC,g'^*\mathcal{F}')$, 
where $g'$ is the morphism $X\times_YC\to X$. 
Then, $Rf_!\mathcal{F}$ and $Rf_!\mathcal{F}'$ have universally the same conductors. 
\end{Lem}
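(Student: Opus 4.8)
The plan is to unwind the definition of ``universally the same conductors'' via the characterization in Lemma \ref{char of sc} and then match up the two conditions (rank equality and Swan conductor equality) with the hypothesis on Euler--Poincar\'e characteristics. By Lemma \ref{char of sc}, it suffices to check (i) that $\rk((Rf_!\mathcal{F})_y)=\rk((Rf_!\mathcal{F}')_y)$ for every geometric point $y$ of $Y$, and (ii) that $\Sw_v(h^*Rf_!\mathcal{F})=\Sw_v(h^*Rf_!\mathcal{F}')$ for every regular $S$-curve $C'$ over $Y$ (here $S=\Spec k$), every $S$-morphism $h:C'\to Y$, every canonical compactification $C'\to\overline{C'}$, and every geometric point $v$ of $\overline{C'}$ over a closed point. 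The first condition is the ``degenerate'' case: taking $C$ to be a point (or, if one insists on genuine curves, comparing ranks along a curve through $y$), the rank of the stalk $(Rf_!\mathcal{F})_y$ equals $\chi_c(X_y,\mathcal{F}|_{X_y})$ by proper base change for $Rf_!$, which by hypothesis (applied with $C$ a neighbourhood of $y$, or directly to the fibre) agrees with the corresponding quantity for $\mathcal{F}'$.

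For condition (ii), the key point is the compatibility of $Rf_!$ with base change along the curve. First I would reduce to the case where $g:C\to Y$ in the hypothesis is an immersion of a smooth curve into $Y$, and where $C'$ in (ii) maps to $Y$; by smooth (or proper) base change, $h^*Rf_!\mathcal{F}\cong Rf'_!(g'^*\mathcal{F})$ where $f':X\times_Y C'\to C'$ is the base change of $f$ and $g':X\times_YC'\to X$ the first projection. Then $\Sw_v$ of a constructible complex on $C'$ is a local invariant at $v$ computed from the stalk at a geometric generic point $\bar\eta_v$ of the strict localization $\overline{C'}_{(v)}$, namely the Swan conductor of the virtual Galois representation $[(h^*Rf_!\mathcal{F})_{\bar\eta_v}]$. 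The plan is to express this Swan conductor as a difference of Euler characteristics: by the Grothendieck--Ogg--Shafarevich formula applied to the curve $C'$ and to the generic point respectively, the alternating sum of Swan conductors over $\overline{C'}\setminus C'$ of $h^*Rf_!\mathcal{F}$ is a $\mathbb{Z}$-linear combination of $\chi_c$'s of the base-changed complex over various open subcurves of $C'$ and over $C'$ itself, each of which equals the corresponding $\chi_c$ of $X\times_Y(\text{open in }C')$ with coefficients $g'^*\mathcal{F}$. Hypothesis then gives equality with the primed version for the \emph{sum} over all boundary points; to isolate a single point $v$, I would localize $C'$ around $v$ (replace $C'$ by a small enough open neighbourhood containing only the boundary point $v$ in its canonical compactification, using that $S$-curves are open in proper $S$-schemes), so that the sum degenerates to the single term $\Sw_v$.

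The main obstacle I anticipate is the base-change and finiteness bookkeeping: one must ensure that $h^*Rf_!\mathcal{F}$ really is computed as $Rf'_!$ of the pulled-back complex (proper base change is clean here since $Rf_!$ satisfies base change unconditionally for separated finite-type morphisms and torsion coefficients), and that the ``Swan conductor at $v$ = difference of Euler characteristics'' identity is applied correctly — i.e. that after shrinking $C'$ to a neighbourhood of $v$ with canonical regular compactification having $v$ as its only point at infinity over $C'$, the GOS formula reads $\chi_c(C',h^*Rf_!\mathcal{F}) = (\text{rank terms}) - \Sw_v(h^*Rf_!\mathcal{F})$, and that the rank terms are controlled by condition (i) already established. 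A secondary subtlety is that the hypothesis is phrased for \emph{smooth} curves $C$ mapping to $Y$, whereas condition (ii) of Lemma \ref{char of sc} involves regular $S$-curves; over an algebraically closed field a regular curve is smooth, so after deleting the finitely many points of $\overline{C'}$ over infinity the relevant $C'$ (or its smooth locus, which is all of it) provides an admissible test curve, and pulling $\mathcal{F}$ back along $g'$ matches $g'^*\mathcal{F}$ in the hypothesis; once all these identifications are in place the equality of Swan conductors at $v$ drops out, completing the verification of both conditions of Lemma \ref{char of sc} and hence the lemma.
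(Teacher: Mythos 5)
Your reduction is sound up to a point: by proper base change $\chi_c(C,g^*Rf_!\mathcal{F})=\chi_c(X\times_YC,g'^*\mathcal{F})$, so the hypothesis becomes an equality of Euler characteristics for $Rf_!\mathcal{F}$ and $Rf_!\mathcal{F}'$ on all smooth curves mapping to $Y$, which is exactly how the paper begins (it then reduces to $f=\id$ and invokes the \emph{proof} of \cite[Lemma 3.3]{SY}). The rank condition (i) of Lemma \ref{char of sc} is also recoverable, though not by ``taking $C$ to be a point'' (a point is not a smooth curve); one should instead use a constant map from $\mathbb{A}^1$ and K\"unneth, or compare generic and special ranks along curves through the point.

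The genuine gap is in your treatment of condition (ii). The Grothendieck--Ogg--Shafarevich formula on a curve $C'$ with canonical compactification $\overline{C'}$ only determines the \emph{sum} $\sum_{x\in\overline{C'}\setminus C'}a_x(j_!h^*Rf_!\mathcal{F})$ from $\chi_c(C',h^*Rf_!\mathcal{F})$ and the ranks, whereas Definition \ref{sc} requires the equality $a_v=a'_v$ at each individual boundary point $v$. Your proposed fix --- ``replace $C'$ by a small enough open neighbourhood containing only the boundary point $v$ in its canonical compactification'' --- rests on a false premise: the canonical regular compactification of any dense open subcurve $U\subset C'$ is still $\overline{C'}$ (it is unique by Zariski's main theorem, as recalled in the paper), so $\overline{U}\setminus U\supseteq\overline{C'}\setminus C'$; shrinking $C'$ can only \emph{add} boundary points (where the contribution vanishes if the complex is smooth there), never remove the other original ones. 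Hence your localized GOS identity still contains all the terms $\Sw_{v_i}$ for $v_i\neq v$, and no choice of open subcurve of $C'$ isolates $\Sw_v$. Separating the local terms requires varying the curve itself (this is the substantive content of the proof of \cite[Lemma 3.3]{SY} that the paper cites), and your argument as written does not supply it.
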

\begin{proof}
By the assumption and the proper base change theorem, 
for every smooth curve $C$ over $k$ 
and every morphism $g:C\to Y$, 
we have $\chi_c(C,g^*Rf_!\mathcal{F})=\chi_c(C,g^*Rf_!\mathcal{F}')$. 
Hence we may assume that $f=\id$. 
Then the assertion follows from the proof of \cite[Lemma 3.3]{SY}. 
\end{proof}


\begin{proof}[Proof of Theorem \ref{A to X}]
We may assume $k$ is algebraically closed. 
We prove the assertion by induction on $\dim X$. 
First we assume $\dim X\leq2$. 
Then by Theorem \ref{A to W}, 
having universally the same conductors 
implies having the same virtual wild ramification. 
Hence the assertion follows Proposition \ref{W to X}. 

Assume $\dim X>2$. 
By the induction hypothesis, it suffices to show the assertion after replacing $X$ by a dense open subscheme. 
We can find a flat morphism $f:X\to Y$ to a surface $Y$ over $k$ by shrinking $X$ if necessary 
(for example, take an \'etale morphism $X\to \mathbb{A}^d$, by shrinking $X$, 
and compose with some projection $\mathbb{A}^d\to \mathbb{A}^2$). 
By the assumption, for each smooth curve $C$ and each morphism $g:C\to Y$, 
the pullbacks $g'^*\mathcal{F}$ and $g'^*\mathcal{F}'$ 
by the canonical morphism $g':X\times_YC\to X$ 
have universally the same conductors. 
Here $\dim X\times_YC<\dim X$ by the flatness of $f$ 
and we can apply the induction hypothesis to $g'^*\mathcal{F}$ and $g'^*\mathcal{F}'$. 
Hence we have 
$\chi_c(X\times_YC,g'^*\mathcal{F})=\chi_c(X\times_YC,g'^*\mathcal{F}')$. 
Therefore by Lemma \ref{red to surface case}, 
$Rf_!\mathcal{F}$ and $Rf_!\mathcal{F}$ have the universally the same conductors. 
By the $\dim X=2$ case, we have $\chi_c(X,\mathcal{F})=\chi_c(X,\mathcal{F}')$ 
and the assertion follows. 
\end{proof}

\begin{Def}[Definition 3.1 in \cite{SY}]
\label{def of X}
\upshape
Let $X$ be a separated scheme of finite type over a perfect field $k$. 
For constructible complexes $\mathcal{F}$ and $\mathcal{F}'$ 
of $\Lambda$-modules and $\Lambda'$-modules respectively on $X$, 
we say $\mathcal{F}$ and $\mathcal{F}'$ have {\it universally the same Euler-Poincar\'e characteristics} 
if for any scheme $Z$ separated of finite type over $k$ 
and any morphism $g:Z\to X$, 
we have 
$\chi_c(Z_{\bar{k}},g^*\mathcal{F})=\chi_c(Z_{\bar{k}},g^*\mathcal{F}')$.
\end{Def}

\begin{Cor}
\label{sc and sx}
Let $X$ be a separated scheme of finite type over a perfect field $k$ 
and $\mathcal{F}$ and $\mathcal{F}'$ constructible complexes 
of $\Lambda$-modules and $\Lambda'$-modules respectively on $X$. 
Consider the following three conditions. 

\begin{enumerate}[{\rm (i)}]
\item\label{have the svwr}
$\mathcal{F}$ and $\mathcal{F}'$ have the same virtual wild ramification. 

\item\label{have the sc}
$\mathcal{F}$ and $\mathcal{F}'$ have universally the same conductors. 

\item\label{have the sx}
$\mathcal{F}$ and $\mathcal{F}'$ have universally the same Euler-Poincar\'e characteristics.
\end{enumerate}

Then, the implications 
{\rm (\ref{have the svwr})}$\Rightarrow${\rm (\ref{have the sc})}$\Leftrightarrow${\rm (\ref{have the sx})} hold. 
Moreover, if $\dim X\leq2$, then these three conditions are equivalent. 
\end{Cor}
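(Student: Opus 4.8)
The plan is to assemble the corollary from the pieces already in place. The implication (\ref{have the svwr})$\Rightarrow$(\ref{have the sc}) is immediate: it is exactly Proposition \ref{W to A} applied with $S=\Spec k$. For the equivalence (\ref{have the sc})$\Leftrightarrow$(\ref{have the sx}), I would argue as follows. For (\ref{have the sx})$\Rightarrow$(\ref{have the sc}): if $\mathcal{F}$ and $\mathcal{F}'$ have universally the same Euler--Poincar\'e characteristics, then in particular for every regular $k$-curve $C$, every $k$-morphism $g\colon C\to X$, and the canonical regular compactification $j\colon C\to\overline{C}$, one has $\chi_c(\overline{C},j_!g^*\mathcal{F})=\chi_c(\overline{C},j_!g^*\mathcal{F}')$, and likewise after pulling back along any quasi-finite morphism to $\overline{C}$; the Grothendieck--Ogg--Shafarevich formula then lets one isolate the local Artin conductors $a_v$, so that $a_v(j_!g^*\mathcal{F})=a_v(j_!g^*\mathcal{F}')$ for every closed point $v$. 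This is precisely the content cited as the proof of \cite[Lemma 3.3]{SY}, which was already invoked in Lemma \ref{red to surface case} with $f=\id$; I would simply refer to it. For the converse (\ref{have the sc})$\Rightarrow$(\ref{have the sx}): suppose $\mathcal{F}$ and $\mathcal{F}'$ have universally the same conductors. Given any $g\colon Z\to X$ with $Z$ separated of finite type over $k$, the pullbacks $g^*\mathcal{F}$ and $g^*\mathcal{F}'$ on $Z$ still have universally the same conductors (a regular $S$-curve mapping to $Z$ composes with $g$ to give one mapping to $X$). Hence it suffices to prove $\chi_c(Z_{\bar k},g^*\mathcal{F})=\chi_c(Z_{\bar k},g^*\mathcal{F}')$, which is exactly Theorem \ref{A to X}.

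For the last sentence, when $\dim X\le2$ I need to close the cycle by proving (\ref{have the sc})$\Rightarrow$(\ref{have the svwr}), or equivalently (\ref{have the sx})$\Rightarrow$(\ref{have the svwr}). But this is now Theorem \ref{A to W} with $S=\Spec k$: an $S$-scheme separated of finite type over a field is an $S$-surface precisely when it has dimension $\le 2$ (being an open subscheme of its projective closure), so Theorem \ref{A to W} gives (\ref{have the sc})$\Leftrightarrow$(\ref{have the svwr}) directly. Combined with the equivalences already established this yields that all three conditions coincide.

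The only step requiring any care is the deduction of (\ref{have the sc}) from (\ref{have the sx}); everything else is a matter of citing Proposition \ref{W to A}, Theorem \ref{A to X}, and Theorem \ref{A to W} in the right order. I expect no genuine obstacle here, since the extraction of local conductors from Euler characteristics via the Grothendieck--Ogg--Shafarevich formula is standard and has already been used in the paper; one should just be careful that pulling back a complex with universally the same conductors along an arbitrary $k$-morphism again yields complexes with universally the same conductors, which is formal from the definition. Here is the proof.

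\begin{proof}
The implication (\ref{have the svwr})$\Rightarrow$(\ref{have the sc}) is Proposition \ref{W to A} applied with $S=\Spec k$.

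We prove (\ref{have the sc})$\Rightarrow$(\ref{have the sx}). Assume $\mathcal{F}$ and $\mathcal{F}'$ have universally the same conductors. Let $Z$ be a scheme separated of finite type over $k$ and $g\colon Z\to X$ a morphism. For any regular $k$-curve $C$ and any $k$-morphism $h\colon C\to Z$, the composite $g\circ h\colon C\to X$ shows, by the definition of universally the same conductors for $\mathcal{F}$ and $\mathcal{F}'$, that $a_v(j_!(g\circ h)^*\mathcal{F})=a_v(j_!(g\circ h)^*\mathcal{F}')$ for every canonical regular compactification $j\colon C\to\overline{C}$ and every geometric point $v\to\overline{C}$ over a closed point; that is, $g^*\mathcal{F}$ and $g^*\mathcal{F}'$ have universally the same conductors over $k$. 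By Theorem \ref{A to X} applied to $g^*\mathcal{F}$ and $g^*\mathcal{F}'$ on $Z$, we get $\chi_c(Z_{\bar k},g^*\mathcal{F})=\chi_c(Z_{\bar k},g^*\mathcal{F}')$. Hence $\mathcal{F}$ and $\mathcal{F}'$ have universally the same Euler-Poincar\'e characteristics.

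We prove (\ref{have the sx})$\Rightarrow$(\ref{have the sc}). Assume $\mathcal{F}$ and $\mathcal{F}'$ have universally the same Euler-Poincar\'e characteristics. Let $C$ be a regular $k$-curve, $g\colon C\to X$ a $k$-morphism, $j\colon C\to\overline{C}$ a canonical regular compactification, and $v\to\overline{C}$ a geometric point over a closed point. Then $j_!g^*\mathcal{F}$ and $j_!g^*\mathcal{F}'$ satisfy $\chi_c(\overline{C}',(j_!g^*\mathcal{F})|_{\overline{C}'})=\chi_c(\overline{C}',(j_!g^*\mathcal{F}')|_{\overline{C}'})$ after pulling back along any quasi-finite morphism $\overline{C}'\to\overline{C}$, by the assumption applied to the various compositions $\overline{C}'\times_{\overline{C}}C\to C\to X$. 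By the Grothendieck-Ogg-Shafarevich formula, this implies $a_v(j_!g^*\mathcal{F})=a_v(j_!g^*\mathcal{F}')$; see the proof of \cite[Lemma 3.3]{SY}. Hence $\mathcal{F}$ and $\mathcal{F}'$ have universally the same conductors.

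Finally, suppose $\dim X\leq2$. Since $X$ is separated of finite type over the field $k$, it is an open subscheme of a proper $k$-scheme of dimension $\leq2$, i.e. an $S$-surface for $S=\Spec k$. By Theorem \ref{A to W}, the conditions (\ref{have the svwr}) and (\ref{have the sc}) are equivalent. Combined with the equivalences (\ref{have the sc})$\Leftrightarrow$(\ref{have the sx}) above, all three conditions are equivalent.
\end{proof}
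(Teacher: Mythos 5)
Your proof is correct and follows essentially the same route as the paper: (i)$\Rightarrow$(ii) is Proposition \ref{W to A}, (ii)$\Rightarrow$(iii) is Theorem \ref{A to X} plus stability of condition (ii) under pullback, (iii)$\Rightarrow$(ii) is the content of Lemma \ref{red to surface case} with $f=\id$ (which itself rests on the Grothendieck--Ogg--Shafarevich argument of the proof of \cite[Lemma 3.3]{SY} that you spell out), and the $\dim X\le 2$ case is Theorem \ref{A to W}. The only difference is cosmetic: the paper cites Lemma \ref{red to surface case} directly for (iii)$\Rightarrow$(ii), whereas you unpack the same argument.
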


\begin{proof}
(\ref{have the svwr})$\Rightarrow$(\ref{have the sc}): 
This is nothing but Proposition \ref{W to A}. 

(\ref{have the sc})$\Rightarrow$(\ref{have the sx}): 
Since the property having universally the same conductors is preserved by pullbacks, 
this follows from Theorem \ref{A to X}. 

(\ref{have the sx})$\Rightarrow$(\ref{have the sc}): 
This is a special case of Lemma \ref{red to surface case} (Take $f=\id$). 

Finally, if $\dim X\leq2$, then the implication (\ref{have the sc})$\Rightarrow$(\ref{have the svwr}) 
is nothing but Theorem \ref{A to W}. 
\end{proof}

\begin{Cor}
\label{functoriality over a field}
Let $f:X\to Y$ be a morphism of schemes separated of finite type over $k$ 
and $\mathcal{F}$ and $\mathcal{F}'$ constructible complexes 
of $\Lambda$-modules and $\Lambda'$-modules respectively on $X$. 
Then, 
\begin{enumerate}[{\rm (i)}]
\item
{\rm \cite[Lemma 3.3.2]{SY}}
If $\mathcal{F}$ and $\mathcal{F}'$ have universally the same Euler-Poincar\'e characteristics 
then so do $Rf_!\mathcal{F}$ and $Rf_!\mathcal{F}'$. 
\item
If $\mathcal{F}$ and $\mathcal{F}'$ have universally the same conductors 
then so do $Rf_!\mathcal{F}$ and $Rf_!\mathcal{F}'$. 
\item
Assume $\dim Y\leq2$. 
Then, if $\mathcal{F}$ and $\mathcal{F}'$ have the same virtual wild ramification 
then so do $Rf_!\mathcal{F}$ and $Rf_!\mathcal{F}'$. 
\end{enumerate}
\end{Cor}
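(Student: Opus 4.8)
The plan is to deduce all three statements formally from results already in hand, with no new geometric input; the only subtlety is bookkeeping about which equivalences are known unconditionally and which require $\dim\leq2$.

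For part (i) there is nothing to do: it is exactly \cite[Lemma 3.3.2]{SY}, so I would simply cite it. For part (ii) I would pass through Euler--Poincar\'e characteristics. By Corollary \ref{sc and sx} the two properties ``universally the same conductors'' and ``universally the same Euler--Poincar\'e characteristics'' are equivalent (these are conditions (ii) and (iii) there, and the equivalence holds in all dimensions). Hence, if $\mathcal{F}$ and $\mathcal{F}'$ have universally the same conductors, they have universally the same Euler--Poincar\'e characteristics; by (i) so do $Rf_!\mathcal{F}$ and $Rf_!\mathcal{F}'$; and applying the equivalence of Corollary \ref{sc and sx} once more, $Rf_!\mathcal{F}$ and $Rf_!\mathcal{F}'$ have universally the same conductors.

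For part (iii) I would assume $\dim Y\leq2$ and chain the implications. If $\mathcal{F}$ and $\mathcal{F}'$ have the same virtual wild ramification, then by Proposition \ref{W to A} they have universally the same conductors, so by (ii) just proved $Rf_!\mathcal{F}$ and $Rf_!\mathcal{F}'$ have universally the same conductors. Now $Rf_!\mathcal{F}$ and $Rf_!\mathcal{F}'$ are constructible complexes on $Y$, and $Y$, being separated of finite type over $k$ of dimension $\leq2$, is a $(\Spec k)$-surface; thus Theorem \ref{A to W} applies and yields that having universally the same conductors implies having the same virtual wild ramification. Composing, $Rf_!\mathcal{F}$ and $Rf_!\mathcal{F}'$ have the same virtual wild ramification.

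The only point requiring attention---and the reason the dimension hypothesis appears in (iii) but not in (i) or (ii)---is that the converse implication ``universally the same conductors $\Rightarrow$ same virtual wild ramification'' is presently available only for surfaces via Theorem \ref{A to W}; the target of $Rf_!$ must therefore be a surface for the argument to close. There is no genuine obstacle beyond this; in higher dimension the statement of (iii) would follow from Conjecture \ref{conj}.
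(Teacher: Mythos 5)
Your proposal is correct and follows essentially the same route as the paper: part (i) is the cited lemma, part (ii) is obtained by shuttling through the equivalence of Corollary \ref{sc and sx}, and part (iii) chains the known implications and closes the loop with Theorem \ref{A to W} applied to $Y$ (the paper routes (iii) through universally the same Euler--Poincar\'e characteristics rather than conductors, but this is the same web of equivalences). Your closing remark correctly identifies why the hypothesis $\dim Y\leq 2$ is needed only in (iii).
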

\begin{proof}
\begin{description}
\item{(ii)}
This follows from (i) and Corollary \ref{sc and sx} (ii)$\Leftrightarrow$(iii). 

\item{(iii)}
By (i)$\Rightarrow$(iii) of Corollary \ref{sc and sx}, 
$\mathcal{F}$ and $\mathcal{F}'$ have universally the same Euler-Poincar\'e characteristics. 
Then by (i), $Rf_!\mathcal{F}$ and $Rf_!\mathcal{F}'$ have universally the same Euler-Poincar\'e characteristics 
and the assertion follows from (iii)$\Rightarrow$(i) of Corollary \ref{sc and sx} (ii). 
\end{description}
\end{proof}

Here we mention that wild ramification of the restrictions to all curves also determines the characteristic cycle. 
In \cite[Definition 4.10]{S}, 
for a constructible complex $\mathcal{F}$ 
on a smooth scheme $X$ over a perfect field, 
the characteristic cycle $CC\mathcal{F}$ is defined as a cycle on the cotangent bundle $T^*X$. 

\begin{Cor}
Let $X$ be a smooth scheme over a perfect field 
and $\mathcal{F}$ and $\mathcal{F}'$ be constructible complexes 
of $\Lambda$-modules and $\Lambda '$-modules respectively on $X$. 
Assume that $\mathcal{F}$ and $\mathcal{F}'$ have universally the same conductors. 
Then, $CC\mathcal{F}=CC\mathcal{F}'$. 
\end{Cor}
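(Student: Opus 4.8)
The plan is to reduce the assertion to the case where $X$ is a smooth surface, settle that case by combining Theorem~\ref{A to W} with \cite[Theorem~0.1]{SY}, and then propagate the equality back to arbitrary dimension through Saito's theory of characteristic cycles and singular supports.

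First I would record the surface case. Let $Y$ be a smooth surface over the perfect base field, and suppose $\mathcal{F}$ and $\mathcal{F}'$ on $Y$ have universally the same conductors. By Theorem~\ref{A to W} they then have the same virtual wild ramification over the base field, which by the Remark after Definition~\ref{svwr} agrees with the notion ``same wild ramification'' of \cite[Definition~5.1]{SY} (in the case of complexes one must check that the slight modification adopted here still implies the hypothesis of \cite[Theorem~0.1]{SY}; this can be arranged along a stratification on whose strata $\mathcal{F}$ and $\mathcal{F}'$ become smooth, using the additivity of $CC$ in distinguished triangles). Then \cite[Theorem~0.1]{SY} gives $CC\mathcal{F}=CC\mathcal{F}'$ on $Y$.

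For a general smooth $X$, since both $CC$ and the hypothesis are local on $X$, I would assume $X$ quasi-projective and, after a closed embedding into a projective space, take generic linear sections $i\colon Y\hookrightarrow X$ of codimension $\dim X-2$ (if $\dim X\leq2$ this is the surface case itself). By \cite{S} a generic such $Y$ is a smooth surface and $CC(\mathcal{F}|_Y)$ is recovered from $CC\mathcal{F}$ through Saito's restriction formula for characteristic cycles, and likewise for $\mathcal{F}'$. The property of having universally the same conductors is stable under arbitrary pull-back, which is immediate from Definition~\ref{sc}; hence $\mathcal{F}|_Y$ and $\mathcal{F}'|_Y$ have universally the same conductors, so by the surface case $CC(\mathcal{F}|_Y)=CC(\mathcal{F}'|_Y)$. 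Since the singular support of a nonzero constructible complex is conical of pure dimension $\dim X$, letting $Y$ run over a sufficiently generic family of such sections detects every irreducible component of $SS\mathcal{F}$ and $SS\mathcal{F}'$ together with its multiplicity in the respective characteristic cycle; therefore $CC\mathcal{F}=CC\mathcal{F}'$. Equivalently, one may simply invoke the reduction to surfaces already carried out in the proof of \cite[Theorem~0.1]{SY}, whose only use of the wild ramification hypothesis on $X$ is via surface sections.

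The hard part is exactly this last reduction: making it rigorous that the characteristic cycle on $X$ is reconstructed from its restrictions to generic surface sections. This requires Saito's compatibility of $CC$ with properly transversal pull-back, a Bertini-type existence of enough transversal sections, and the bookkeeping of multiplicities under the Legendre-type correspondence relating $SS\mathcal{F}$ and $SS(\mathcal{F}|_Y)$ --- or, what comes to the same, a careful reading of the dimension reduction in \cite{SY}. The remaining ingredients (stability of the conductor condition under pull-back, Theorem~\ref{A to W}, and \cite[Theorem~0.1]{SY}) are then formal, apart from the comparison, in the case of complexes, between the present notion of ``same virtual wild ramification'' and that of \cite{SY}.
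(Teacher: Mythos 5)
Your proposal has a genuine gap, and it sits exactly where you locate it yourself: the reconstruction of $CC\mathcal{F}$ on a higher-dimensional $X$ from its restrictions to generic surface sections is not carried out, and it is not a routine matter. For instance, an irreducible component of $SS\mathcal{F}$ whose base is a closed point of $X$ is simply missed by a generic linear section of codimension $\dim X-2>0$; to see such components one must use non-generic surfaces through prescribed points and then re-establish proper transversality, which is delicate. Your fallback --- ``invoke the reduction to surfaces already carried out in the proof of \cite[Theorem 0.1]{SY}, whose only use of the wild ramification hypothesis is via surface sections'' --- misreads that proof: \cite{SY} does not slice characteristic cycles by surfaces. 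It proves Theorem 0.1 by showing that same wild ramification implies \emph{universally the same Euler--Poincar\'e characteristics} (their Proposition 0.2, via the Brauer-trace formula of Deligne--Illusie) and separately that universally the same Euler--Poincar\'e characteristics already determine $CC$ (their Proposition 3.4, via the Milnor formula and the characterization of $CC$ by Euler characteristics of pullbacks).

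That second ingredient, \cite[Proposition 3.4]{SY}, is the key fact you are missing, and with it the corollary is immediate and needs no slicing of $CC$ at all: by Corollary \ref{sc and sx} (ii)$\Rightarrow$(iii) (which rests on Theorem \ref{A to X}, already proved, and whose own dimension reduction is performed at the level of Euler characteristics by fibering $X$ over a surface and applying $Rf_!$), universally the same conductors implies universally the same Euler--Poincar\'e characteristics; then \cite[Proposition 3.4]{SY} gives $CC\mathcal{F}=CC\mathcal{F}'$. Your surface case (Theorem \ref{A to W} combined with \cite[Theorem 0.1]{SY}) is fine as far as it goes, and the stability of ``universally the same conductors'' under pullback is indeed immediate from Definition \ref{sc}; but as written the passage from surfaces to arbitrary dimension is the whole content of the proof and is left unestablished.
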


\begin{proof}
By \cite[Proposition 3.4]{SY}, it suffices to prove that 
$\mathcal{F}$ and $\mathcal{F}'$ have universally the same Euler-Poincar\'e characteristics. 
Hence the assertion follows from Corollary \ref{sc and sx} (ii)$\Rightarrow$(iii). 
\end{proof}

\section{The Swan conductors of cohomology groups}
Let $S$ be an excellent trait with perfect residue field of characteristic $p>0$ 
and generic point $\eta=\Spec K$. 
For a scheme $X$ separated of finite type over $K$ 
and a constructible complex $\mathcal{F}$ of $\Lambda$-modules on $X$, we denote by $\Sw(X,\mathcal{F})$ 
the alternating sum $\sum (-1)^i\Sw H_c^i(X_{\bar{K}},\mathcal{F})$. 

The following proposition is a consequence of the result of Vidal (i.e, Proposition \ref{l indep fixed pt}): 
\begin{Prop}
\label{svwr to sgc}
Let $X$ be a separated scheme of finite type over $K$ 
and $\mathcal{F}$ and $\mathcal{F}'$ constructible complexes 
of $\Lambda$-modules and $\Lambda'$-modules respectively on $X$. 
If $\mathcal{F}$ and $\mathcal{F}'$ have the same virtual wild ramification over $S$ 
then we have $\Sw(X,\mathcal{F})=\Sw(X,\mathcal{F}')$. 
\end{Prop}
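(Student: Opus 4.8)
The plan is to reduce to the curve case by dévissage and then invoke the curve-level analogue of Deligne--Illusie's argument, using Vidal's results on $\ell$-independence of traces at fixed points. First I would reduce to the case where $X$ is a smooth curve over $K$. This reduction proceeds exactly as in the proof of Theorem~\ref{A to X}: by dévissage one may assume $X$ is normal and connected and $\mathcal{F},\mathcal{F}'$ are smooth complexes; if $\dim X \le 1$ we are already there, and if $\dim X > 1$ one fibers $X$ over a curve $Y$ over $K$ by a flat morphism (after shrinking), applies the induction hypothesis on dimension to the pullbacks $g'^*\mathcal{F}$, $g'^*\mathcal{F}'$ along $g':X\times_Y C \to X$ for curves $C$ over $Y$, and uses the compatibility of $\Sw(X,-)$ with the filtration coming from $Rf_!$ (the Leray spectral sequence together with proper base change gives $\Sw(X,\mathcal{F}) = \Sw(Y,Rf_!\mathcal{F})$, and one checks $Rf_!\mathcal{F}$, $Rf_!\mathcal{F}'$ again have the same virtual wild ramification over $S$ via Lemma~\ref{pullback} and the curve-fiber hypothesis). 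The hypothesis that $\mathcal{F},\mathcal{F}'$ have the same virtual wild ramification over $S$ is preserved under all these operations by Lemma~\ref{pullback} and Definition~\ref{svwr}.

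Having reduced to $X$ a smooth connected curve over $K$, I would choose a $G$-torsor $W \to X$ for a finite group $G$ trivializing both $\mathcal{F}$ and $\mathcal{F}'$, with corresponding virtual representations $M \in K(\Lambda[G])$ and $M' \in K(\Lambda'[G])$. Vidal's theorem (the stated Proposition~\ref{l indep fixed pt}, whose precise form is Vidal's analogue of \cite[Lemme 2.3, Lemme 2.5]{I} for the Swan conductor over a local field, cf.\ \cite[Corollaire 3.4]{V}) expresses $\Sw(X,\mathcal{F})$ as a sum
$$
\Sw(X,\mathcal{F}) = \frac{1}{|G|}\sum_{\sigma \in G} a_\sigma \cdot \Trbr(\sigma, M),
$$
where the coefficients $a_\sigma$ are integers depending only on the torsor $W \to X$ (and the trait $S$), are independent of the coefficient characteristic $\ell \neq p$, and vanish unless $\sigma$ has order a power of $p$ and $\sigma$ fixes a point of a suitable compactification/normalization $\overline{W}$ of $W$ over $S$. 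Restricting the sum to the set $S_0 = \{\sigma \in G \mid \sigma \text{ is of } p\text{-power order and fixes such a point}\}$ and using that $\Sw(X,\mathcal{F})$ is an integer, I would apply Lemma~\ref{brtr} to rewrite each Brauer trace, obtaining
$$
\Sw(X,\mathcal{F}) = \frac{1}{|G|}\sum_{\sigma \in S_0} a_\sigma \cdot \frac{p\cdot\Gamma^\sigma(M) - \Gamma^{\sigma^p}(M)}{p-1},
$$
and the identical formula for $\mathcal{F}'$ with $M'$ in place of $M$.

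The conclusion then follows: the coefficients $a_\sigma$ are the same for $\mathcal{F}$ and $\mathcal{F}'$ (they depend only on $W \to X$, hence on the common torsor), and for $\sigma \in S_0$ the fixed-part dimensions $\Gamma^\sigma(M)$ and $\Gamma^{\sigma^p}(M)$ agree with $\Gamma^\sigma(M')$ and $\Gamma^{\sigma^p}(M')$ — this is precisely what ``same virtual wild ramification'' delivers, since each such $\sigma$ is a pro-$p$ element of the relevant fundamental group of the strict localization of a compactification of $X$ over $S$ at a point fixed by $\sigma$ (one must be slightly careful to translate ``fixes a point of $\overline{W}$'' into membership in an inertia subgroup at a geometric point over a closed point of a normal compactification $\overline{X}$ of $X$ over $S$, but this is exactly the dictionary already used in the proofs of Lemmas~\ref{curve case} and \ref{p-power cov}). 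Hence term by term the two sums coincide and $\Sw(X,\mathcal{F}) = \Sw(X,\mathcal{F}')$. The main obstacle I anticipate is the dimension-reduction step: one must verify that $\Sw(X,-)$ genuinely factors through $Rf_!$ along a fibration over a curve and that the curve-fiber version of the hypothesis is strong enough to feed the induction — this requires the analogue for the Swan conductor of the spectral-sequence bookkeeping that is routine for the Euler characteristic, and care that $Rf_!$ of sheaves with the same virtual wild ramification again has that property (which is where one invokes Lemma~\ref{pullback} together with the constructibility/stratification formalism of Definition~\ref{svwr}).
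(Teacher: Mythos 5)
Your endgame is the right one, but your reduction step contains a genuine gap --- and is in fact unnecessary. The key point you miss is that Vidal's result (Proposition~\ref{l indep fixed pt}) is stated for a normal separated scheme $X$ of finite type over $K$ of \emph{arbitrary} dimension, not just for curves. The paper therefore skips the fibration-over-a-curve step entirely: after the standard d\'evissage to $X$ normal connected with $\mathcal{F},\mathcal{F}'$ smooth and trivialized by a common $G$-torsor $W\to X$, one writes $\Sw(X,\mathcal{F})=\frac{1}{|I|}\sum_{\sigma\in I}sw_I(\sigma)\Trbr(\sigma,(Rf_!\mathcal{F})_{\bar\eta})$ for $I=\Gal(K'/K)$, expands by Proposition~\ref{l indep fixed pt}(i) into a sum over $H=G\times I$, converts the Brauer traces into fixed-part dimensions via Lemma~\ref{brtr} (using integrality), and concludes from parts (ii) and (iii). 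Your description of the coefficients $a_\sigma$ and of the translation of ``fixes a point of $\overline{W}$'' into membership in an inertia group is a correct account of this part.

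The gap is in your dimension-reduction. You assert that $Rf_!\mathcal{F}$ and $Rf_!\mathcal{F}'$ ``again have the same virtual wild ramification over $S$ via Lemma~\ref{pullback} and the curve-fiber hypothesis.'' Lemma~\ref{pullback} only gives stability of the property under \emph{pullback} $g^*$; it says nothing about $Rf_!$. Stability of ``same virtual wild ramification'' under $Rf_!$ to a curve is Corollary~\ref{functoriality over a local field}(iii), which in the paper is deduced from the chain Proposition~\ref{W to A} $\Rightarrow$ Theorem~\ref{sc to sgc} $\Rightarrow$ Theorem~\ref{A to W}, and Theorem~\ref{sc to sgc} itself uses the proposition you are trying to prove (in its curve case). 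So as written your reduction is either circular or requires importing the full strength of the main theorem (a regular compactification argument in the surface case) merely to re-derive a statement that Vidal's trace formula already gives uniformly in all dimensions. If you keep the reduction, you must replace the appeal to Lemma~\ref{pullback} by an independent proof that $Rf_!$ preserves the hypothesis; the cleaner fix is to drop the reduction and apply Proposition~\ref{l indep fixed pt} directly to $X$.
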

\begin{Prop}
\label{l indep fixed pt}
Assume $S$ is strictly local. 
Let $X$ be a normal separated scheme of finite type over $\eta=\Spec K$. 
We denote the structure morphism by $f$. 
Let $\mathcal{F}$ be a smooth complex of $\Lambda$-modules on $X$ 
and $W\to X$ a $G$-torsor, for a finite group $G$, trivializing $\mathcal{F}$. 
We denote by $M$ the virtual representation of $G$ corresponding to $\mathcal{F}$ 
and take a finite Galois extension $K'$ of $K$ with Galois group $I$ 
trivializing the smooth complex $Rf_!\mathcal{F}$. 
We put $H=G\times I$. 
Then $H$ acts on $H_c^*(W_{\bar{\eta}},\mathbb{Q}_l)$ in a natural way 
and the followings hold, where $\bar{\eta}$ is a geometric point over $\eta$ 
and $l$ the characteristic of the finite field $\Lambda$. 
\begin{enumerate}[{\rm (i)}]
\item {\rm \cite[Corollary 6.4]{V}}
For $\sigma\in I$, we have 
$$
\Tr(\sigma,H_c^*(X_{\bar{\eta}},\mathcal{F}))
=\frac{1}{|G|}\sum_{g\in G}
\Tr ((g,\sigma),H_c^*(W_{\bar{\eta}},\mathbb{Q}_l))\cdot
\Trbr(g,M).
$$

\item {\rm\cite[Proposition 4.2]{V}
}
For $h\in H$, 
the trace $\Tr (h,H_c^*(W_{\bar{\eta}},\mathbb{Q}_l))$ is independent of $l\neq p$. 

\item {\rm\cite[Corollary 6.5]{V}
}
Let $\overline{X}$ be a normal compactification of $X$ over $S$ 
and $\overline{W}$ the normalization of $W$ in $\overline{X}$. 
Then for $h=(g,\sigma)\in H$, 
if $\Tr (h,H_c^*(W_{\bar{\eta}},\mathbb{Q}_l))\neq0$, 
then $h$ is of order a power of $p$ 
and the induced map $g:\overline{W}\to\overline{W}$ fixes some closed point of the closed fiber of $\overline{W}$. 
\end{enumerate}
\end{Prop}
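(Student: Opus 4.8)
This proposition is the analogue over a local field of Proposition \ref{EP and brtr}, and its three parts are, as indicated, the results \cite[Corollary 6.4]{V}, \cite[Proposition 4.2]{V} and \cite[Corollary 6.5]{V} of Vidal transcribed into the present notation; the plan is to set up the $H$-action and then match hypotheses. First I would describe the action: $G$ acts on $W$ over $X$, hence on $W_{\bar\eta}$ over $X_{\bar\eta}$ and thus on $H_c^*(W_{\bar\eta},\mathbb{Q}_l)$, and the absolute Galois group $G_K$ acts on the same space in the usual way, commuting with the $G$-action. Taking $K'$ large enough that it also trivializes $Rf_{W!}\mathbb{Q}_l$ (a larger field still trivializes $Rf_!\mathcal{F}$, so this costs nothing), the $G_K$-action factors through $I=\Gal(K'/K)$, and we obtain the asserted $H=G\times I$-action on $H_c^*(W_{\bar\eta},\mathbb{Q}_l)$.

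For (i), I would apply \cite[Corollary 6.4]{V} to the $G$-torsor $W\to X$ and the virtual representation $M$ of $G$ corresponding to $\mathcal{F}$: Vidal's Brauer-trace formula expresses, for $\sigma$ in the Galois group, the trace $\Tr(\sigma,H_c^*(X_{\bar\eta},\mathcal{F}))$ as the average over $g\in G$ of $\Tr((g,\sigma),H_c^*(W_{\bar\eta},\mathbb{Q}_l))$ weighted by $\Trbr(g,M)$, which is precisely the displayed identity. Here I would check that the way $\mathcal{F}$ is recovered from $M$ and the torsor matches Vidal's normalization, and that the convention $\Trbr(g,-)=0$ for $g$ of order divisible by $l$ agrees with hers; this is the analogue of Proposition \ref{EP and brtr} (i) and the verification is the same.

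For (ii) I would invoke \cite[Proposition 4.2]{V}, the independence of $l\neq p$ of the trace of an element of a finite group acting on the compactly supported $l$-adic cohomology of a variety over a strictly local trait, applied to $W$ with its $H$-action. For (iii) I would invoke \cite[Corollary 6.5]{V}: if $h=(g,\sigma)\in H$ has nonzero trace on $H_c^*(W_{\bar\eta},\mathbb{Q}_l)$ then $h$ is of $p$-power order and $g$ fixes a closed point of the special fiber of a normal compactification of $W$ over $S$; the compactification to use is $\overline{W}$, the normalization of a normal compactification $\overline{X}$ of $X$ over $S$ in $W$, which is of finite type over $S$ and inherits an $H$-action extending the one on $W$, so Vidal's hypotheses hold.

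The main difficulty is not mathematical but a matter of bookkeeping: one must check that an excellent strictly local trait with perfect residue field of characteristic $p>0$ lies within the running hypotheses of \cite{V}, that her statements -- phrased for a single finite group acting on a variety over the trait -- apply verbatim after replacing that group by the product $H=G\times I$, and that enlarging $K'$ as above does not affect the left-hand side of (i). No new ideas beyond those of \cite{V} are needed.
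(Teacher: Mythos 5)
Your proposal is correct and matches the paper's treatment: the paper gives no argument beyond the citations to Vidal's Corollary 6.4, Proposition 4.2 and Corollary 6.5, which is exactly the route you take, with the added (harmless and reasonable) bookkeeping about the $H=G\times I$-action and enlarging $K'$. Nothing further is needed.
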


\begin{proof}[Proof of Proposition \ref{svwr to sgc}]
We have 
$$
\Sw(X,\mathcal{F})=\frac{1}{|I|}\sum_{\sigma\in I}
sw_I(\sigma)\cdot
\Trbr(\sigma,(Rf_!\mathcal{F})_{\bar{\eta}}). 
$$
By Proposition \ref{l indep fixed pt} (i), we have 
$$
\Sw(X,\mathcal{F})=\frac{1}{|H|}\sum_{h=(g,\sigma)\in H}
sw_I(\sigma)\cdot
\Tr (h,H_c^*(W_{\bar{\eta}},\mathbb{Q}_l))\cdot
\Trbr(g,M).
$$
Since $\Sw(X,\mathcal{F})$ is an integer, 
by Lemma \ref{brtr}, 
$$
\Sw(X,\mathcal{F})=\frac{1}{|H|}\sum_{\substack{h=(g,\sigma)\in H\\\text{ of $p$-power order}}}
sw_I(\sigma)\cdot
\Tr (h,H_c^*(W_{\bar{\eta}},\mathbb{Q}_l))\cdot
\frac{p\cdot\Gamma^g(M)-\Gamma^{g^p}(M)}{p-1}.
$$
Then, the assertion follows from Proposition \ref{l indep fixed pt} (ii) and (iii). 
\end{proof}

Now we prove the following lemma, which is needed 
to reduce Theorem \ref{sc to sgc} to the curve case (i.e. the $S$-surface case). 
\begin{Lem}
\label{red2}
Let $f:X\to Y$ be a morphism of separated schemes of finite type over $K$ 
and $\mathcal{F}$ and $\mathcal{F}'$ constructible complexes 
of $\Lambda$-modules and $\Lambda'$-modules respectively on $X$. 
Assume that 
$\mathcal{F}$ and $\mathcal{F}'$ have universally the same conductors over $S$ 
and that 
for any regular $S$-curve $\xi$ and any morphism $g:\xi\to Y$, 
we have $\Sw(X\times_Y\xi,\mathcal{F})=\Sw(X\times_Y\xi,\mathcal{F}')$. 
Then, $Rf_!\mathcal{F}$ and $Rf_!\mathcal{F}'$ have universally the same conductors over $S$. 
\end{Lem}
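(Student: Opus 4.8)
The plan is to follow the strategy of the proof of Lemma \ref{red to surface case}, using the proper base change theorem together with the induction formula for the Swan conductor (Lemma \ref{conductor discriminant}). By Definition \ref{sc} one must show that $a_v(j_!h^*Rf_!\mathcal F)=a_v(j_!h^*Rf_!\mathcal F')$ for every regular $S$-curve $C$, every $S$-morphism $h\colon C\to Y$, every canonical regular compactification $j\colon C\to\overline C$ of $C$ over $S$, and every geometric point $v\to\overline C$ over a closed point. First I would note that, since $Y$ is separated of finite type over $K$, a connected regular $S$-curve carrying a morphism to $Y$ is necessarily of the form $\Spec L$ for a finite extension $L/K$, with canonical regular compactification $\Spec\mathcal O_L$, where $\mathcal O_L$ is the integral closure of $\mathcal O_S$ in $L$; replacing $S$ by its henselization --- which changes neither the hypotheses nor the conclusion, all the conductors involved being local at $v$ --- we may assume $\mathcal O_L$ is a henselian discrete valuation ring, so that $v$ is its unique closed point.

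Fix $C=\Spec L$ and $h$, put $y=h(\Spec L)$ (a closed point of $Y$, with $\kappa(y)\subseteq L$), and let $\overline y$ be a geometric point of $Y$ over $y$. Proper base change gives $(Rf_!\mathcal F)_{\overline y}\simeq R\Gamma_c(X_{\overline y},\mathcal F)$; write $M_y\in K(\Lambda[G_{\kappa(y)}])$ for the virtual representation this defines. Then, since $v\notin\Spec L$,
$$
a_v(j_!h^*Rf_!\mathcal F)=\chi_c(X_{\overline y},\mathcal F)+\Sw(M_y|_{G_L}),
$$
and likewise for $\mathcal F'$, where $M_y|_{G_L}$ is regarded as a representation of $G_L$ via $G_L\subseteq G_{\kappa(y)}$ and $\Sw$ is the Swan conductor over $L$. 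On the other hand $X\times_Y\Spec L\simeq X_{\overline y}\times_{\kappa(y)}L$, and combining this with the decomposition of $L\otimes_K\overline K$ and proper base change one identifies $R\Gamma_c((X\times_Y\Spec L)_{\overline K},\mathcal F)$ with $\Ind_{G_L}^{G_K}(M_y|_{G_L})$ as a virtual representation of $G_K$; hence, by Lemma \ref{conductor discriminant},
$$
\Sw(X\times_Y\Spec L,\mathcal F)=\Sw(M_y|_{G_L})+\chi_c(X_{\overline y},\mathcal F)\cdot\bigl(d_{L/K}+[L_s:K]-1\bigr),
$$
and likewise for $\mathcal F'$, where $L_s$ is the maximal separable subextension of $L/K$ and $d_{L/K}$ the length of the discriminant of $L_s/K$.

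Now the hypothesis provides $\Sw(X\times_Y\Spec L,\mathcal F)=\Sw(X\times_Y\Spec L,\mathcal F')$ for all such $L$. Choosing $L\supseteq\kappa(y)$ large enough to trivialize both $M_y$ and $M_y'$ --- so that $\Sw(M_y|_{G_L})=\Sw(M_y'|_{G_L})=0$ --- and to contain a tamely ramified extension of $K$ of degree $\geq2$ --- so that $d_{L/K}+[L_s:K]-1>0$ --- one gets $\chi_c(X_{\overline y},\mathcal F)=\chi_c(X_{\overline y},\mathcal F')$; substituting this into the second displayed identity, now for an arbitrary $L\supseteq\kappa(y)$, gives $\Sw(M_y|_{G_L})=\Sw(M_y'|_{G_L})$. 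Together with the first displayed identity these two equalities give $a_v(j_!h^*Rf_!\mathcal F)=a_v(j_!h^*Rf_!\mathcal F')$, as wanted. (The equality of Euler characteristics can alternatively be deduced from the assumption that $\mathcal F$ and $\mathcal F'$ have universally the same conductors over $S$ via Theorem \ref{A to X} applied to the fibers.)

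The main obstacle is the bookkeeping underlying this reduction: establishing that the regular $S$-curves mapping to the $K$-scheme $Y$ are precisely the spectra of finite extensions of $K$, checking the invariance of both hypotheses and of the conclusion under henselization of $S$, and proving the two proper-base-change identifications carefully --- in particular the isomorphism $R\Gamma_c((X\times_Y\Spec L)_{\overline K},\mathcal F)\simeq\Ind_{G_L}^{G_K}(M_y|_{G_L})$ of virtual $G_K$-representations. Granted these, the conclusion follows by inserting the induction formula of Lemma \ref{conductor discriminant} into the hypothesis and choosing $L$ appropriately.
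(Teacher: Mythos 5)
Your proposal is correct and follows the same essential strategy as the paper's proof: identify the relevant regular $S$-curves mapping to $Y$ with spectra of finite extensions $L$ of $K$, split the Artin conductor $a_v(j_!h^*Rf_!\mathcal F)$ into a generic-rank term $\chi_c(X_{\bar y},\mathcal F)$ and a Swan term via proper base change, and convert the hypothesis $\Sw(X\times_Y\Spec L,\mathcal F)=\Sw(X\times_Y\Spec L,\mathcal F')$ into the needed equality of Swan terms using the induction formula of Lemma \ref{conductor discriminant} applied to $\Ind_{G_L}^{G_K}(M_y|_{G_L})$. The paper organizes this through Lemma \ref{char of sc} rather than verifying Definition \ref{sc} directly, but the computation is identical. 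The one genuine divergence is how the equality $\chi_c(X_{\bar y},\mathcal F)=\chi_c(X_{\bar y},\mathcal F')$ is obtained: the paper deduces it from the first hypothesis via Lemma \ref{x over a local field} and Theorem \ref{A to X} (your parenthetical alternative), whereas your primary argument extracts it from the Swan hypothesis alone by choosing $L$ trivializing $M_y$ and $M_y'$ and with $[L_s:K]\geq 2$, so that the factor $d+[L_s:K]-1$ is positive and the Swan terms vanish. This variant is sound and slightly more economical, since it makes the lemma independent of Theorem \ref{A to X}; its only cost is the (easy) existence of a suitable $L$, e.g.\ the compositum of a splitting field of $M_y$ and $M_y'$ with a tame extension $K(\pi^{1/\ell})$, $\ell$ prime to $p$. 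The reductions you flag as bookkeeping (henselizing $S$, the identification of curves with $\Spec L$, and the induced-representation identification) are treated with the same brevity in the paper itself.
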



\begin{Lem}
\label{x over a local field}
Let $Z$ be a separated scheme of finite type over $K$ 
and $\mathcal{F}$ and $\mathcal{F}'$ constructible complexes 
of $\Lambda$-modules and $\Lambda'$-modules respectively on $Z$. 
If $\mathcal{F}$ and $\mathcal{F}'$ have universally the same conductors over $S$, 
then they have universally the same conductors over $\bar{\eta}$, 
where $\bar{\eta}$ is the spectrum of an algebraic closure of $K$. 
\end{Lem}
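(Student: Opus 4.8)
The plan is to argue by induction on $\dim Z$, reducing via Lemma~\ref{char of sc} to a comparison of Swan conductors. Write $Z_{\bar\eta}=Z\otimes_K\bar K$ and keep the same letters for the pullbacks of $\mathcal F,\mathcal F'$. Equality of ranks at geometric points of $Z_{\bar\eta}$ is immediate from the corresponding equality over $S$, so by Lemma~\ref{char of sc} it suffices to show $\Sw_v(g^*\mathcal F)=\Sw_v(g^*\mathcal F')$ for every regular $\bar\eta$-curve $C$, every $\bar\eta$-morphism $g\colon C\to Z_{\bar\eta}$, every canonical regular compactification $\overline C$ over $\bar\eta$, and every geometric point $v\to\overline C$ over a closed point. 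If $\operatorname{char}K=0$ the residue characteristic of $\bar\eta$ is $0$ and all such Swan conductors vanish, so I may assume $\operatorname{char}K=p>0$. Using additivity of the Swan conductor in distinguished triangles I stratify $Z$ and reduce to the case where $\mathcal F,\mathcal F'$ are smooth sheaves on a common dense open $U\subseteq Z$; if $g$ factors through $(Z\setminus U)_{\bar\eta}$ I conclude by the inductive hypothesis applied to $\mathcal F|_{Z\setminus U},\mathcal F'|_{Z\setminus U}$, and otherwise (discarding the trivial case $v\in g^{-1}(U_{\bar\eta})$) the point $v$ is a boundary point of $C^\circ:=g^{-1}(U_{\bar\eta})$ and $\Sw_v(g^*\mathcal F)=\Sw_v\bigl(g^*(\mathcal F|_U)\bigr)$ with $g^*(\mathcal F|_U)$ smooth on $C^\circ$.

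Next I would spread out. Since $\bar K/K$ is algebraic, every finitely generated subfield of $\bar K$ is finite over $K$, so $C^\circ\subseteq\overline C$, the morphism $g$ and the point $v$ descend to a finite extension $K'/K$: there are $C_1^\circ\subseteq\overline{C_1}$, $g_1\colon C_1\to Z_{K'}$ and a closed point $v_1\in\overline{C_1}$ with residue field $K'$ (after enlarging $K'$), such that $g^*(\mathcal F|_U)$ is the pullback to $C^\circ=C_1^\circ\otimes_{K'}\bar K$ of the smooth sheaf $\mathcal G:=g_1^*(\mathcal F|_{U_{K'}})$ on $C_1^\circ$, and similarly $g^*(\mathcal F'|_U)$ comes from $\mathcal G':=g_1^*(\mathcal F'|_{U_{K'}})$. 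The strict localization $\overline C_{(v)}$ is obtained from $(\overline{C_1})_{(v_1)}$ by a purely inseparable extension $\bar K/{K'}^{\mathrm{sep}}$ of residue fields, and the Swan conductor is insensitive to purely inseparable extension of the base (Lemma~\ref{conductor discriminant}, using Lemma~\ref{p insep}); hence $\Sw_v(g^*(\mathcal F|_U))=\Sw_{v_1}(\mathcal G)$ and $\Sw_v(g^*(\mathcal F'|_U))=\Sw_{v_1}(\mathcal G')$, where $\Sw_{v_1}$ is computed over $K'$. It remains to prove $\Sw_{v_1}(\mathcal G)=\Sw_{v_1}(\mathcal G')$.

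For this I would use that having universally the same conductors over $S$ is stable under pullback, so $\mathcal G$ and $\mathcal G'$, regarded on the $K$-scheme $C_1$ via $C_1\to Z_{K'}\to Z$, have universally the same conductors over $S$; applying this to the $S$-curves $\Spec K'\to C_1$ given by $K'$-points $y$ of $C_1^\circ$ yields $\Sw(\mathcal G_{\bar y})=\Sw(\mathcal G'_{\bar y})$, where $\Sw(-)$ is the Swan conductor at the place of $K'$. Fix a uniformizer $t$ of $\overline{C_1}$ at $v_1$; by smoothness the residue disk of $v_1$ is identified with $\mathfrak m_{K'}$ via $t$, so for every large integer $m$ prime to $p$ there is a point $y_m\in C_1^\circ(K')$ with $t(y_m)=\pi_{K'}^{m}$. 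The crucial input is the local estimate $\Sw(\mathcal G_{\bar y_m})=m\cdot\Sw_{v_1}(\mathcal G)+O(1)$ as $m\to\infty$, the error being bounded in terms of the restriction of $\mathcal G$ to a punctured neighbourhood of $v_1$ only; this identifies the (geometric) Swan conductor of $\mathcal G$ at $v_1$ with the asymptotic slope of the $p$-adic Swan conductor of $\mathcal G$ along $K'$-points tending to $v_1$, and is proved by breaking the representation of the local fundamental group of the punctured disk at $v_1$ into its breaks and observing that restriction along $t\mapsto\pi_{K'}^{m}$ multiplies each geometric break by $m$ up to a bounded arithmetic correction (as one already checks on Artin--Schreier sheaves). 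Feeding this estimate for $\mathcal G$ and $\mathcal G'$ into $\Sw(\mathcal G_{\bar y_m})=\Sw(\mathcal G'_{\bar y_m})$ and letting $m\to\infty$ gives $\Sw_{v_1}(\mathcal G)=\Sw_{v_1}(\mathcal G')$, which completes the proof.

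The main obstacle is this local estimate relating the geometric Swan conductor at $v_1$ to the $p$-adic Swan conductors at nearby rational points: making the error term uniform in $m$ requires a careful break-by-break analysis of representations of the fundamental group of a punctured disk over the imperfect local field $K'$, and one must also ensure the residue disk of $v_1$ contains enough $K'$-points without enlarging the base field.
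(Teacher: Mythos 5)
Your reduction to comparing Swan conductors and your descent to a finite extension $K'/K$ are fine, but the argument collapses at the step you yourself flag as the crucial input: the estimate $\Sw(\mathcal G_{\bar y_m})=m\cdot\Sw_{v_1}(\mathcal G)+O(1)$, where $\Sw_{v_1}(\mathcal G)$ must be the \emph{geometric} Swan conductor at $v_1$ (the one computed after base change to $\bar K$, since that is the quantity $\Sw_v(g^*\mathcal F)$ you need to control). This estimate is false, already for Artin--Schreier sheaves, because of fierce ramification: the arithmetic conductors at $K'$-points see pole orders over the imperfect field $K'$, while the geometric conductor is computed after adjoining $p$-th roots of constants. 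Concretely, take $K'=k((s))$ with $k$ perfect of characteristic $p$, let $C_1^\circ=\mathbb{G}_{m,K'}=\Spec K'[t,t^{-1}]$, $v_1=\{t=0\}$, and let $\mathcal G$ be the Artin--Schreier sheaf of $x^p-x=s/t^p$. Over $\bar K$ one has $s/t^p\equiv s^{1/p}/t \bmod \wp$, so the geometric Swan conductor at $v_1$ equals $1$; but at $y_m\colon t=s^m$ the stalk is the character of $s^{1-mp}\in K'$, whose valuation $1-mp$ is prime to $p$, so $\Sw(\mathcal G_{\bar y_m})=mp-1$. The asymptotic slope is $p$, not $1$, and the discrepancy $(p-1)m$ is unbounded, so the limit you take does not compute the quantity you need and no bounded error term can repair it. (A smaller point: your appeal to Lemmas \ref{conductor discriminant} and \ref{p insep} to pass from $v$ to $v_1$ misuses them --- they concern purely inseparable extensions of the base field of a trait, not extensions of the residue field of the local field of the curve at $v_1$; in effect you are simply \emph{defining} $\Sw_{v_1}$ to be the geometric conductor, which is legitimate but makes the failure above unavoidable.)

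The paper avoids any asymptotic computation: after descending $C$ and $g$ to a curve $C'$ over a finite extension $\eta'$ of $\eta$ exactly as you do, it observes that $C'$ is an $S$-surface, so Theorem \ref{A to W} applies and upgrades ``universally the same conductors over $S$'' (which is stable under pullback to $C'$) to ``same virtual wild ramification over $S'$''; Lemma \ref{pullback} transports this along $\bar\eta\to S'$, and Proposition \ref{W to A} converts it back into ``universally the same conductors over $\bar\eta$''. The passage from conductors along arithmetic test curves to the geometric conductor at $v$ is precisely the content of the main theorem, whose proof selects a \emph{good} test curve through the relevant point via Lemma \ref{taking a good curve} rather than using the naive family $t=\pi^m$; your proposal would need an input of comparable strength at exactly the point where it breaks.
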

\begin{proof}
By Proposition \ref{W to A}, it suffices to show that 
for every smooth curve $C$ over $\bar{\eta}$ 
and every morphism $g:C\to Z\times_\eta\bar{\eta}$ over $\bar{\eta}$, 
$g^*p^*\mathcal{F}$ and $g^*p^*\mathcal{F}'$ have the same virtual wild ramification over $\bar{\eta}$, 
where $p$ is the projection $Z\times_\eta\bar{\eta}\to Z$. 

Take a smooth curve $C$ and a morphism $g:C\to Z\times_\eta\bar{\eta}$ over $\bar{\eta}$. 
We can find 
a finite extension $\eta'$ of $\eta$, 
a  smooth curve $C'$ over $\eta'$, 
and a morphism $g':C'\to Z\times_\eta\eta'$ over $\eta'$ 
such that $g'$ induces $g$ by base extension. 
Then by the assumption, 
$g'^*p'^*\mathcal{F}$ and $g'^*p'^*\mathcal{F}'$ have universally the same conductors over $S'$, 
where $p'$ is the projection $Z\times_\eta\eta'$. 
Since $C'$ is an $S$-surface, we can apply Theorem \ref{A to W}. 
Thus $g'^*p'^*\mathcal{F}$ and $g'^*p'^*\mathcal{F}'$ have the same virtual wild ramification over $S'$. 
Then the assertion follows from Lemma \ref{pullback}. 
\end{proof}

\begin{proof}[Proof of Lemma \ref{red2}]
We may assume $S$ is strictly local. 
By Lemma \ref{char of sc}, it suffices to show 
\begin{enumerate}[{\rm (i)}]
\item
For every geometric point $y$ of $Y$, we have $\rk Rf_!\mathcal{F}_y=\rk Rf_!\mathcal{F}'_y$. 
\item
For every finite extension $\xi$ of $\eta$ 
and every morphism $\xi\to Y$ over $\eta$, 
we have $\Sw Rf_!\mathcal{F}|_\xi=\Sw Rf_!\mathcal{F}'|_\xi$. 
\end{enumerate}

To show (i), we may assume $y$ is the spectrum of an algebraically closed field. 
Then the assertion follows from Lemma \ref{x over a local field} and Theorem \ref{A to X}. 

By Lemma \ref{conductor discriminant}, we have 
$\Sw(X\times_Y\xi,\mathcal{F})=\Sw(Rf_!\mathcal{F}|_{\xi})+\rk(Rf_!\mathcal{F}|_{\xi})\cdot(d+n-1)$ 
and the same equality for $\mathcal{F}'$. 
Hence the assertion follows from (i) and the assumption. 
\end{proof}

\begin{proof}[Proof of Theorem \ref{sc to sgc}]
We may assume $S$ is strictly local. 
We prove the assertion by induction on $\dim X$. 
If $\dim X\leq1$, by Theorem \ref{A to W}, 
$\mathcal{F}$ and $\mathcal{F}'$ have the same virtual wild ramification over $S$. Then the assertion follows from Proposition \ref{W to A}. 
Assume $\dim X\geq2$. 
By the induction hypothesis, it suffices to prove the assertion 
after replacing $X$ by a dense open subscheme. 
Then, we can take a flat morphism $f:X\to Y$ to a curve $Y$ over $K$ 
by shrinking $X$ if necessary. 
Then for every finite extension $\xi$ of $\eta$ 
and every morphism $\xi\to Y$, we have 
$\Sw(X\times_Y\xi,\mathcal{F})=\Sw(X\times_Y\xi,\mathcal{F}')$. 
In fact, since we have $\dim X\times_Y\xi<\dim X$, we can apply the induction hypothesis to 
$g^*\mathcal{F}$ and $g^*\mathcal{F}'$, where $g$ is the canonical morphism $X\times_Y\xi\to X$. 
Then, by Lemma \ref{red2}, $Rf_!\mathcal{F}$ and $Rf_!\mathcal{F}'$ have universally the same conductors over $S$. 
By the $\dim X=1$ case, we get $\Sw(Y,Rf_!\mathcal{F})=\Sw(Y,Rf_!\mathcal{F}')$ and the assertion follows. 
\end{proof}

\begin{Def}
\label{def of X}
\upshape
Let $X$ be a separated scheme of finite type over $K$. 
For constructible complexes $\mathcal{F}$ and $\mathcal{F}'$ 
of $\Lambda$-modules and $\Lambda'$-modules respectively on $X$, 
we say $\mathcal{F}$ and $\mathcal{F}'$ have {\it universally the same global conductors} over $S$ 
if for any scheme $Z$ separated of finite type over $K$ 
and any morphism $g:Z\to X$, 
we have 
$\chi_c(Z_{\bar{K}},g^*\mathcal{F})=\chi_c(Z_{\bar{K}},g^*\mathcal{F}')$ 
and $\Sw(Z,g^*\mathcal{F})=\Sw(Z,g^*\mathcal{F}')$. 
\end{Def}

\begin{Cor}
\label{sc and sgc}
Let $X$ be a separated scheme of finite type over $K$ 
and $\mathcal{F}$ and $\mathcal{F}'$ constructible complexes 
of $\Lambda$-modules and $\Lambda'$-modules respectively on $X$. 
Consider the following three conditions. 
\begin{enumerate}[{\rm (i)}]
\item\label{svw s}
$\mathcal{F}$ and $\mathcal{F}'$ have the same virtual wild ramification over $S$. 
\item\label{sc s}
$\mathcal{F}$ and $\mathcal{F}'$ have universally the same conductors over $S$. 
\item\label{sgc s}
$\mathcal{F}$ and $\mathcal{F}'$ have universally the same global conductors over $S$. 
\end{enumerate}

Then, the implications 
{\rm (\ref{svw s})}$\Rightarrow${\rm (\ref{sc s})}$\Leftrightarrow${\rm (\ref{sgc s})} hold. 
Moreover, if $\dim X\leq1$, then these three conditions are equivalent. 
\end{Cor}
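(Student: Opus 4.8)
The plan is to reduce everything to results already established. The implication (\ref{svw s})$\Rightarrow$(\ref{sc s}) is exactly Proposition \ref{W to A}, and, when $\dim X\leq 1$, the implication (\ref{sc s})$\Rightarrow$(\ref{svw s}) is exactly Theorem \ref{A to W}, since a scheme separated of finite type over $K$ of dimension $\leq 1$ is an $S$-surface. So the real content is the equivalence (\ref{sc s})$\Leftrightarrow$(\ref{sgc s}), from which all three assertions follow.

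For (\ref{sc s})$\Rightarrow$(\ref{sgc s}) I would first note that having universally the same conductors over $S$ is preserved under pullback along any $K$-morphism $g\colon Z\to X$ with $Z$ separated of finite type; this is immediate from Definition \ref{sc}. It then suffices to show that if $\mathcal{G}$ and $\mathcal{G}'$ on such a $Z$ have universally the same conductors over $S$, then $\chi_c(Z_{\bar K},\mathcal{G})=\chi_c(Z_{\bar K},\mathcal{G}')$ and $\Sw(Z,\mathcal{G})=\Sw(Z,\mathcal{G}')$. The first equality follows by applying Lemma \ref{x over a local field}, whose conclusion gives that the pullbacks to $Z_{\bar\eta}=Z_{\bar K}$ have universally the same conductors over $\bar\eta$, and then Theorem \ref{A to X}; the second equality is precisely Theorem \ref{sc to sgc}. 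Thus this direction is only a repackaging of those two theorems.

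The substantive direction is (\ref{sgc s})$\Rightarrow$(\ref{sc s}). Here I would first reduce to the case where $S$ is strictly local: both conditions are unchanged under the base change $S^{\mathrm{sh}}\to S$ — for (\ref{sgc s}) because $\chi_c$ over $\bar K$ is insensitive to it and the Swan conductor of a $G_K$-module equals that of its restriction to the inertia group (as is used already in the proof of Theorem \ref{sc to sgc}), and for (\ref{sc s}) because a closed point of the canonical regular compactification of an $S^{\mathrm{sh}}$-curve lies over a closed point of the canonical regular compactification of a corresponding $S$-curve carrying the same local Swan conductor. Assuming $S$ strictly local, I would invoke Lemma \ref{char of sc}, reducing the claim to (a) $\rk(\mathcal{F}_x)=\rk(\mathcal{F}'_x)$ for every geometric point $x$ of $X$, and (b) $\Sw_v(g^*\mathcal{F})=\Sw_v(g^*\mathcal{F}')$ for every $S$-morphism $g\colon C\to X$ from a regular $S$-curve, every canonical regular compactification $j\colon C\hookrightarrow\overline{C}$, and every geometric point $v\to\overline{C}$ over a closed point. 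Since $X$ lies over $\eta=\Spec K$ and $\mathcal{O}_S$ is strictly henselian, each connected component of such a $C$ is the spectrum $\Spec L$ of a finite extension $L$ of $K$, with $\overline{C}=\Spec\mathcal{O}_L$ a strictly henselian trait and $v$ over its unique closed point; in particular $Z=\Spec L$ is separated of finite type over $K$, so (\ref{sgc s}) applies to it. Feeding $Z=\Spec L$ and $g\colon\Spec L\to X$ into (\ref{sgc s}): the equality of Euler characteristics of the finite $\bar K$-scheme $(\Spec L)_{\bar K}$ reads $[L_s:K]\cdot\rk(g^*\mathcal{F})=[L_s:K]\cdot\rk(g^*\mathcal{F}')$, whence $\rk(\mathcal{F}_{x_0})=\rk(\mathcal{F}'_{x_0})$ for the image $x_0$ of $g$; since every closed point of $X$ is obtained this way and the rank is a constructible function on the Jacobson scheme $X$, (a) follows. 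For (b), the $G_K$-module $H^i_c((\Spec L)_{\bar K},g^*\mathcal{F})$ is the representation induced along $G_L=G_{L_s}\hookrightarrow G_K$ from $\mathcal{H}^i(g^*\mathcal{F}_{\bar\eta})$, so the induction formula of Lemma \ref{conductor discriminant} — which applies because $S$ strictly local forces the residue field of $K$ to be algebraically closed — yields
$$\Sw(\Spec L,g^*\mathcal{F})=\Sw_v(g^*\mathcal{F})+\rk(g^*\mathcal{F})\cdot(d+[L_s:K]-1),$$
with $d$ the length of the discriminant of $L_s/K$, and likewise for $\mathcal{F}'$; comparing with $\Sw(\Spec L,g^*\mathcal{F})=\Sw(\Spec L,g^*\mathcal{F}')$ from (\ref{sgc s}) and cancelling the equal rank terms gives (b).

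I expect the only genuinely non-formal part to be the direction (\ref{sgc s})$\Rightarrow$(\ref{sc s}), and within it the two bookkeeping steps: the reduction to $S$ strictly local, which is what makes the canonical regular compactification $\Spec\mathcal{O}_L$ a single trait so that one closed point $v$ is visible, and the appeal to Lemma \ref{conductor discriminant}, which converts the global invariant $\Sw(\Spec L,-)$ into the local one $\Sw_v(-)$ up to a rank-dependent term that cancels. Everything else is a direct appeal to Proposition \ref{W to A}, Lemma \ref{x over a local field}, Lemma \ref{char of sc}, and Theorems \ref{A to W}, \ref{A to X} and \ref{sc to sgc}.
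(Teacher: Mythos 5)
Your proof is correct and follows essentially the same route as the paper's: (i)$\Rightarrow$(ii) is Proposition \ref{W to A}, (ii)$\Rightarrow$(iii) follows from pullback-stability of condition (ii) together with Theorems \ref{sc to sgc} and \ref{A to X}, (iii)$\Rightarrow$(ii) reduces to $X$ finite over $\eta$ and invokes the induction formula of Lemma \ref{conductor discriminant}, and the case $\dim X\leq1$ is Theorem \ref{A to W}. You are in fact slightly more careful than the paper in the direction (ii)$\Rightarrow$(iii), where the Euler--Poincar\'e half of condition (iii) does require Lemma \ref{x over a local field} and Theorem \ref{A to X} in addition to Theorem \ref{sc to sgc}.
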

\begin{proof}
(\ref{svw s})$\Rightarrow$(\ref{sc s}): 
This is nothing but Proposition \ref{W to A}. 

(\ref{sc s})$\Rightarrow$(\ref{sgc s}): 
Since the property having universally the same conductors is preserved by pullbacks, 
this follows from Theorem \ref{sc to sgc}. 

(\ref{sgc s})$\Rightarrow$(\ref{sc s}): 
We may assume $X$ is connected and finite over $\eta=\Spec K$ 
and it suffices to show $\rk\mathcal{F}=\rk\mathcal{F}'$ and $\Sw(\mathcal{F})=\Sw(\mathcal{F}')$. 
The first equality follows from the equality $\rk f_*\mathcal{F}=[X:\eta]\cdot\rk\mathcal{F}$. 
The second equality follows from Lemma \ref{conductor discriminant}. 

Finally when $\dim X\leq1$, then the implication (\ref{sc s})$\Rightarrow$(\ref{svw s}) 
is nothing but Theorem \ref{A to W}. 

%
%
%
\end{proof}


\begin{Cor}
\label{functoriality over a local field}
Let $f:X\to Y$ be a morphism between schemes separated of finite type over $K$ 
and $\mathcal{F}$ and $\mathcal{F}'$ constructible complexes 
of $\Lambda$-modules and $\Lambda'$-modules respectively on $X$. 
Then, 
\begin{enumerate}[{\rm (i)}]
\item
If $\mathcal{F}$ and $\mathcal{F}'$ have universally the same global conductors over $S$, 
then so do $Rf_!\mathcal{F}$ and $Rf_!\mathcal{F}'$. 
\item
If $\mathcal{F}$ and $\mathcal{F}'$ have universally the same conductors over $S$, 
then so do $Rf_!\mathcal{F}$ and $Rf_!\mathcal{F}'$. 
\item
Assume $\dim Y\leq1$. 
Then, if $\mathcal{F}$ and $\mathcal{F}'$ have the same virtual wild ramification over $S$, 
then so do $Rf_!\mathcal{F}$ and $Rf_!\mathcal{F}'$. 
\end{enumerate}
\end{Cor}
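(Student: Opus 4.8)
The plan is to follow the same three-step pattern as in the proof of Corollary \ref{functoriality over a field}, replacing ``universally the same Euler-Poincar\'e characteristics'' by ``universally the same global conductors over $S$'' throughout, and using Corollary \ref{sc and sgc} in place of Corollary \ref{sc and sx}.

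For (i) I would first reduce the statement to an assertion about cohomology groups. Given a scheme $Z$ separated of finite type over $K$ and a morphism $h\colon Z\to Y$, form the cartesian square with projections $f'\colon X\times_Y Z\to Z$ and $g'\colon X\times_Y Z\to X$. Base change for $Rf_!$ gives $h^*Rf_!\mathcal{F}\simeq Rf'_!g'^*\mathcal{F}$, and the identity of functors $R\Gamma_c(Z_{\bar K},Rf'_!(-))\simeq R\Gamma_c((X\times_Y Z)_{\bar K},-)$, compatible with the $G_K$-action, yields $G_K$-equivariant isomorphisms $H_c^i(Z_{\bar K},h^*Rf_!\mathcal{F})\simeq H_c^i((X\times_Y Z)_{\bar K},g'^*\mathcal{F})$ for every $i$, and likewise for $\mathcal{F}'$. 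Taking the alternating sum of dimensions gives $\chi_c(Z_{\bar K},h^*Rf_!\mathcal{F})=\chi_c((X\times_Y Z)_{\bar K},g'^*\mathcal{F})$ (this half is \cite[Lemma 3.3.2]{SY}), and taking the alternating sum of Swan conductors gives $\Sw(Z,h^*Rf_!\mathcal{F})=\Sw(X\times_Y Z,g'^*\mathcal{F})$, since the $H_c^i$ are finitely generated continuous $G_K$-representations. As $\mathcal{F}$ and $\mathcal{F}'$ have universally the same global conductors over $S$, applying the hypothesis to $g'\colon X\times_Y Z\to X$ gives both equalities for $Rf_!\mathcal{F}$ and $Rf_!\mathcal{F}'$ after pulling back along $h$; letting $h$ range over all such morphisms yields (i).

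For (ii) I would use the equivalence (\ref{sc s})$\Leftrightarrow$(\ref{sgc s}) of Corollary \ref{sc and sgc} to rephrase the hypothesis as ``$\mathcal{F}$ and $\mathcal{F}'$ have universally the same global conductors over $S$'', apply (i), and then invoke the same equivalence for $Rf_!\mathcal{F}$ and $Rf_!\mathcal{F}'$. For (iii), assuming $\dim Y\le1$: by the implication (\ref{svw s})$\Rightarrow$(\ref{sgc s}) of Corollary \ref{sc and sgc} the complexes $\mathcal{F},\mathcal{F}'$ have universally the same global conductors over $S$; by (i) so do $Rf_!\mathcal{F},Rf_!\mathcal{F}'$; and since $\dim Y\le1$ the three conditions of Corollary \ref{sc and sgc} are equivalent for complexes on $Y$, so (\ref{sgc s})$\Rightarrow$(\ref{svw s}) applied to $Rf_!\mathcal{F},Rf_!\mathcal{F}'$ shows they have the same virtual wild ramification over $S$.

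The only part carrying content is (i), and even there the $\chi_c$-half is exactly \cite[Lemma 3.3.2]{SY} while the $\Sw$-half is a formal consequence of the same base-change-and-composition identity once one notes that each $H_c^i(Z_{\bar K},-)$ is a finitely generated $G_K$-representation continuous for the discrete topology, so that its Swan conductor is well defined and the alternating sum is additive along the isomorphisms above. Hence I anticipate no genuine obstacle: all the substance has already been absorbed into Theorem \ref{sc to sgc} and Corollary \ref{sc and sgc}, and the remaining work is bookkeeping.
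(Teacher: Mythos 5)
Your proposal is correct and follows essentially the same route as the paper: part (i) via proper base change and the $G_K$-equivariant identification $H_c^i(Z_{\bar K},g^*Rf_!\mathcal{F})\simeq H_c^i((Z\times_YX)_{\bar K},g'^*\mathcal{F})$, and parts (ii) and (iii) by shuttling through the implications of Corollary \ref{sc and sgc} exactly as the paper does.
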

\begin{proof}
\begin{enumerate}[{\rm (i)}]
\item
For a scheme $Z$ separated of finite type over $K$ and a morphism $g:Z\to Y$, 
by the proper base change theorem, 
$\chi_c(Z_{\bar{K}},g^*Rf_!\mathcal{F})=\chi_c((Z\times_YX)_{\bar{K}},g'^*\mathcal{F})$ 
and $\Sw(Z,g^*Rf_!\mathcal{F})=\Sw(Z\times_YX,g'^*\mathcal{F})$, 
where $g'$ is the canonical morphism $Z\times_YX\to X$.  

\item
This follows from (i) and Corollary \ref{sc and sgc} (ii)$\Leftrightarrow$(iii). 

\item
By (i)$\Rightarrow$(iii) of Corollary \ref{sc and sgc}, 
$\mathcal{F}$ and $\mathcal{F}'$ have universally the same global conductors over $S$. 
Then by (i), $Rf_!\mathcal{F}$ and $Rf_!\mathcal{F}'$ have universally the same global conductors over $S$ 
and the assertion follows from (iii)$\Rightarrow$(i) of Corollary \ref{sc and sgc}.   
\end{enumerate}
\end{proof}

\end{document}